\newcommand{\hlabel}{\phantomsection\label}
\newtheorem{Step}{Step}
\newtheorem{Claim}{Claim}
\newtheorem{Theorem}{Theorem}[section]
\newtheorem{Proposition}[Theorem]{Proposition}
\newtheorem{Remark}[Theorem]{Remark}
\newtheorem{Lemma}[Theorem]{Lemma}
\newtheorem{Corollary}[Theorem]{Corollary}
\newtheorem{Definition}[Theorem]{Definition}
\newtheorem{Example}{Example}
\newtheorem{Algorithm}{Algorithm}
\let\expandafter\oldproof\csname\string\proof\endcsname
\let\oldendproof\endproof
\renewenvironment{proof}[1][\proofname]{
\oldproof[\ttfamily\scshape \bf #1.]
}{\oldendproof}
\newcommand{\set}[1]{\left\{#1\right\}}
\def\tilde{\widetilde}
\def\ox{\overline{x}}
\def\epsilon{\varepsilon}
\def\ox{\bar{x}}
\def \N{{\rm I\!N}}
\def \R{{\rm I\!R}}
\newcommand{\dotproduct}[1]{\left\langle#1\right\rangle}
\newcommand{\brac}[1]{\left(#1\right)}
\newcommand{\sbrac}[1]{\left[#1\right]}
\newcommand{\abs}[1]{\left|#1\right|}
\newcommand{\norm}[1]{\left\|#1\right\|}
\numberwithin{equation}{section}
\title{\bf General Derivative-Free Optimization Methods under Global and Local Lipschitz Continuity of Gradients}
\author{Pham Duy Khanh\footnote{Group of Analysis and Applied Mathematics, Department of Mathematics, Ho Chi Minh City University of Education, Ho Chi Minh City, Vietnam. E-mails: pdkhanh182@gmail.com, khanhpd@hcmue.edu.vn}\quad Boris S. Mordukhovich\footnote{Department of Mathematics, Wayne State University, Detroit, Michigan, USA. E-mail: aa1086@wayne.edu. Research of this author was partly supported by the US National Science Foundation under grants DMS-1808978 and DMS-2204519, by the Australian Research Council under grant DP-190100555, and by Project 111 of China under grant D21024.}\quad Dat Ba Tran\footnote{Department of Mathematics, Wayne State University, Detroit, Michigan, USA. E-mail: tranbadat@wayne.edu. Research of this author was partly supported by the US National Science Foundation under grants DMS-1808978 and DMS-2204519.}}
\begin{document}
\maketitle
\vspace*{-0.3in}
\noindent
{\small{\bf Abstract}. } This paper addresses the study of derivative-free smooth optimization problems, where the gradient information on the objective function is unavailable. Two novel general derivative-free methods are proposed and developed for minimizing such functions with either global or local Lipschitz continuous gradients. The newly developed methods use gradient approximations based on finite differences, where finite difference intervals are automatically adapted to the magnitude of the exact gradients without knowing them exactly. The suggested algorithms achieve fundamental convergence results, including stationarity of accumulation points in general settings as well as global convergence with constructive convergence rates when the Kurdyka-\L ojasiewicz property is imposed. The local convergence of the proposed algorithms to nonisolated local minimizers, along with their local convergence rates, is also analyzed under this property. Numerical experiences involving various convex, nonconvex, noiseless, and noisy functions demonstrate that the new methods exhibit essential advantages over other state-of-the-art methods in derivative-free optimization.\\[1ex] 
{\bf Key words}: derivative-free optimization, finite differences, black-box optimization, noisy optimization, zeroth-order optimization\\[1ex]
{\bf Mathematics Subject Classification (2020)} 90C25, 90C26, 90C30, 90C56 
\section{Introduction}
This paper addresses developing novel {\em derivative-free methods} to solve unconstrained optimization problems given in the form
\begin{align}\label{optim prob}
{\rm minimize}\quad f(x)\quad\text{ subject to }\; x\in\R^n,
\end{align}
where $f:\R^n\rightarrow\R$  is a continuously differentiable ($\mathcal{C}^{1}$-smooth) function, and where only a {\em noisy approximation} $\phi(x)=f(x)+\xi(x)$ of the objective function is computable. It means that the {\em gradient} information of the objective function is fully {\em unavailable}, even in the noiseless case when the noise function $\xi(x)$ vanishes. These problems have received much attention with a variety of methods being developed over the years. The major developments in this vein are provided by the  {\em Nelder-Mead simplex method} \cite{lagarias98,nelder65}, {\em direct search methods} \cite{gray06,hooke61} and {\em trust-region methods} \cite{conn09 siam}. More recently, numerous empirical results conducted by Shi et al. in \cite{shi23} show that derivative-free optimization methods based on {\em finite differences} are accurate, efficient, and in some cases superior to other state-of-the-art derivative-free optimization methods established in the literature. Meanwhile, extensive numerical comparisons in Berahas et al. \cite{berahas22} together with the further analysis by Scheinberg \cite{scheinberg22} also tell us that the accuracy of gradients obtained from finite differences is much higher than from {\em randomized schemes}. These empirical results suggest that the methods using finite differences have much to be recommended, and that the research in this direction should be strongly encouraged. 

There are several derivative-free optimization methods in the literature that successfully employ finite differences. Kelley et al. \cite{choi00,gilmore95,kelly99} proposed the {\em implicit filtering} algorithm based on a finite difference approach to deal with  \textit{noisy smooth box-constrained} optimization problems. Berahas et al. \cite{berahas19,berahas21} developed finite difference-based line search methods for the minimization of {\em noisy smooth functions}. The approaches to compute the \textit{finite difference interval} are also studied by Gill et al. \cite{gill83}, and recently by Mor\'e and Wild \cite{more12} as well as by Shi et al. \cite{shi22}. Although methods of this type are often used in practice to solve derivative-free smooth problems, the {\em theoretical understanding} of behavior of their iterates in general cases is rather limited in comparison with analysis of classical first-order methods. {\em Stationarity of accumulation points} is a fundamental convergence property that is achieved by many first-order methods \cite{bertsekasbook,nesterovbook} and by some derivative-free optimization algorithms using the simplex-based direct method \cite[Theorem~3.2]{tseng99}, line search based on simplex derivatives \cite[Chapter~9]{conn09}, coordinate search with difference Hessians \cite[Theorem~4.2]{mifflin75}. However, to the best of our knowledge, this property has not been established for derivative-free methods employing \textit{finite differences} for general classes of smooth functions (i.e., smooth functions with global or local Lipschitz continuous gradients), even in noiseless cases. Note that the convergence properties of derivative-free methods for noisy functions studied in \cite{berahas21,berahas19} do not ensure them for noiseless functions. The reason is that the usual selection of finite difference intervals based on the noise level becomes zero, being invalid therefore in this case; see, e.g., \cite[Equations (3.3), (3.4)]{berahas19}. The implicit filtering algorithm \cite[Theorem~2.1]{choi00}, although achieving the stationarity for accumulation points, requires the objective function to be $\mathcal{C}^2$-smooth with rather strict conditions including the boundedness of level sets and the uniform boundedness of the inverses of Hessians. Furthermore, the convergence of the sequence of iterates to a {\em nonisolated} stationary point is also ignored in the analysis of aforementioned derivative-free optimization methods while obtained by various first-order methods \cite{attouch13,kmt22.1,kmt23.1,kmpt23.2} under the {\em Kurdyka-\L ojasiewicz} (KL) property \cite{attouch10,kurdyka,lojasiewicz65}, which is a rather mild regularity condition satisfied for the vast majority of objective functions in practice. Of course, the local convergence to nonisolated local minimizers for such derivative-free methods is still questionable.

The assumption of {\em global Lipschitz continuity}  of the gradient seems to be omnipresent in derivative-free line search methods; see, e.g., \cite[Theorem~2.1]{choi00}, \cite[Assumption~A1]{berahas19}, \cite[Assumption~1.1]{berahas21}.  For general derivative-free trust-region methods,  Conn et al. \cite{conn09 siam} prove global convergence results under the global Lipschitz continuity of the gradient and the Hessian of the objective function. Such properties are also employed in the proximal point method adapted to derivative-free smooth optimization problems  by Hare and Lucet \cite[Assumption 1]{hare14}. We are not familiar with any efficient finite difference-based method considering specifically the class of smooth functions with \textit{locally Lipschitz} continuous gradients, which is much broader than the class of smooth functions with globally Lipschitz continuous gradients and covers, in particular, $\mathcal{C}^2$-smooth functions. This is in contrast to the exact versions of gradient descent methods, which obtain various convergence properties including the stationarity of accumulation points for the version with backtracking stepsizes on the class of $\mathcal{C}^1$-smooth functions \cite[Proposition~1.2.1]{bertsekasbook} and the global convergence for the version with sufficiently small stepsizes on the class of definable $\mathcal{C}^{1}$-smooth functions with locally Lipschitz continuous gradients \cite{josz23}. This raises the need for the design and analysis of finite difference-based methods concerning the class of smooth functions with locally Lipschitz continuous gradients, which is the best we can hope in this context since the error bounds for finite differences are not available outside this class.

Having in mind the above discussions, we propose in this paper {\em two novel derivative-free methods} for minimizing smooth functions. A general derivative-free method with {\em constant stepsize} is designed for smooth functions whose gradients are {\em globally} Lipschitzian although the exact calculation of the Lipschitz constant is not necessary. A general derivative-free method with {\em backtracking stepsize} is designed for smooth functions whose gradients are only {\em locally} Lipschitzian. Such newly developed methods are analyzed rigorously for {\em noiseless functions}, which particularly have many applications in the design of space trajectories \cite{addis11}, reinforcement learning \cite{malik19}, and other black-box optimization problems \cite{audet17,marsden07}. For such problems, the objective functions may in fact well-behave, even being smooth, while the exact gradient information is either unavailable due to the lack of analytical forms of the objective function, or it is computationally expensive to obtain. The main ideas for our methods come mostly from the {\em inexact reduced gradient methods} (IRGM) introduced and justified in \cite{kmt22.1}. Roughly speaking, the newly developed derivative-free methods are designed as inexact gradient descent methods for smooth functions and thus inherit typical convergence properties for first-order methods including stationarity of accumulation points, {\em global convergence} under the KL property, and constructive convergence rates depending on the KL exponent. Furthermore, we establish {\em local convergence} to {\em nonisolated} local minimizers for both proposed methods under the KL property of the objective function.

Regarding {\em numerical aspects}, our methods borrow the ideas of {\em gradient approximations} from IRGM \cite{kmt22.1} and its nonsmooth extensions \cite{kmt23.1,kmpt23.2}, where errors are automatically adapted to the magnitude of exact gradients; see, e.g., \cite[Section~6.2]{kmt22.1}. Correspondingly, the proposed derivative-free methods generate at each iteration a descent direction based on an approximate gradient associated with the largest finite difference interval. Choosing such an interval in this way allows the methods to omit \textit{rounding errors} as much as possible, which makes them more efficient without knowing the \textit{noise level} in advance. This selection of the finite difference interval is also completely different from the selection used in the implicit filtering algorithm \cite{choi00,gilmore95,kelly99}, since the latter forces the finite difference interval to decrease after each iteration while ours does not. Extensive {\em empirical results} on diverse datasets including convex, nonconvex, noiseless, and noisy functions with different noise levels in different finite-dimensional spaces show that our newly developed methods usually perform better than many other state-of-the-art derivative-free methods in solving smooth problems. For the conciseness of the paper, the convergence analysis and numerical experiments involving methods coupled with quasi-Newton algorithms are not considered here but are deferred to our future research.

The rest of the paper is organized as follows. Section~\ref{Sec:2} presents some basic definitions and preliminaries used throughout the entire paper. Section~\ref{sec:3} examines two types of approximation for gradients that cover finite differences.  The main parts of our work concerning the design and convergence properties of general derivative-free methods under the global and local Lipschitz continuity of the gradient are given in Section~\ref{sec:4} and Section~\ref{sec:5}, respectively. Local convergence of the proposed methods to nonisolated local minimizers is presented in Section~\ref{sec:6}. Numerical experiments comparing the efficiency of the proposed methods with other derivative-free methods for both noisy and noiseless functions are conducted in Section~\ref{sec:7}. Concluding remarks on the contribution of this paper together with some open questions and perspectives of our future research are given in Section~\ref{sec:concl}.\vspace*{-0.1in}

\section{Preliminaries}\label{Sec:2}

First we recall some basic notions and notation frequently used in the paper. All our considerations are given in the space $\R^n$ with the Euclidean norm $\|\cdot\|$. For any $i=1,\ldots,n$, let $e_i$ denote the $i^{\rm th}$ basic vector in $\R^n.$ As always, $\N:=\{1,2,\ldots\}$ signifies the collection of natural numbers. For any $x\in\R^n$ and $\varepsilon>0,$ let $\mathbb{B}(x,\varepsilon)$ and $\overline{\mathbb{B}}(x,\varepsilon)$ stand for
the open and closed balls centered at $x$ with radius $\varepsilon$, respectively. When $x=0$, these balls are denoted simply by $\varepsilon\mathbb{B}$ and $\varepsilon\overline{\mathbb{B}}$.

Recall  
that a mapping $G\colon\R^n\to\R^m$ is {\em Lipschitz continuous on a subset} $D$ of $\R^n$ if there exists a constant $L>0$ such that we have
\begin{align*}
\norm{G(x)-G(y)}\le L\norm{x-y} \text{ for all }x,y\in D.
\end{align*}
If $D=\R^n$, the mapping $G$ is said to be {\em globally Lipschitz continuous}. The  {\em local Lipschitz continuity} of $G$ on $\R^n$ is understood as the Lipschitz continuity of this mapping on every compact subset of $\R^n$. The latter is equivalent to saying that for any $x\in\R^n$ there is a neighborhood $U$ of $x$ such that $G$ is Lipschitz continuous on $U$, which is also equivalent to the Lipschitz continuity of $G$ on every bounded subset of $\R^n$. In what follows, we denote by $\mathcal{C}^{1,1}$ the class of ${\cal C}^1$-smooth mappings that have a {\em locally Lipschitz continuous gradient} on $\R^n$ and by $\mathcal{C}^{1,1}_L$ the class of ${\cal C}^1$-smooth mappings that have a {\em globally Lipschitz continuous gradient with the constant} $L>0$ on the entire space.\vspace*{0.03in}

Our convergence analysis of the numerical algorithms developed in the subsequent sections takes advantage of the following important results and notions. The first result taken from \cite[Lemma~A.11]{solodovbook} presents a simple albeit very useful property of real-valued functions with Lipschitz continuous gradients.

\begin{Lemma}\label{lemma descent}
Let $f:\R^n\rightarrow\R$, and let $x,y\in\R^n$. If $f$ is differentiable on the line segment $[x,y]$ with its derivative being Lipschitz continuous on this segment with a constant $L>0$, then
\begin{align}
\abs{f(y)-f(x)-\dotproduct{\nabla f(x),y-x}}\le \dfrac{L}{2}\norm{y-x}^2.
\end{align}
\end{Lemma}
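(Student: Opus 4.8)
The plan is to reduce the two-dimensional-looking statement to a one-dimensional calculus problem by restricting $f$ to the segment $[x,y]$ and applying the fundamental theorem of calculus. First I would define the auxiliary function $g\colon[0,1]\to\R$ by $g(t):=f\bigl(x+t(y-x)\bigr)$. Since $f$ is differentiable on $[x,y]$ by hypothesis, the chain rule gives $g'(t)=\dotproduct{\nabla f(x+t(y-x)),\,y-x}$, and $g$ is absolutely continuous because its derivative is bounded (the gradient is Lipschitz, hence continuous, on the compact segment). The key observation is that the quantity to be bounded is exactly an increment of $g$ minus its linearized term: note that $f(y)-f(x)=g(1)-g(0)$ and $\dotproduct{\nabla f(x),y-x}=g'(0)$.

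Next I would write $g(1)-g(0)=\int_0^1 g'(t)\,\d t$ by the fundamental theorem of calculus, and subtract the constant $g'(0)$ to obtain
\begin{align*}
f(y)-f(x)-\dotproduct{\nabla f(x),y-x}=\int_0^1\bigl(g'(t)-g'(0)\bigr)\,\d t.
\end{align*}
Then I would estimate the integrand pointwise. Using the Cauchy--Schwarz inequality and the Lipschitz continuity of $\nabla f$ on $[x,y]$ with constant $L$, we get
\begin{align*}
\abs{g'(t)-g'(0)}=\abs{\dotproduct{\nabla f(x+t(y-x))-\nabla f(x),\,y-x}}\le L\norm{t(y-x)}\norm{y-x}=Lt\norm{y-x}^2.
\end{align*}

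Finally I would take absolute values, pass them inside the integral via the triangle inequality for integrals, and integrate the bound $Lt\norm{y-x}^2$ over $t\in[0,1]$. Since $\int_0^1 t\,\d t=\tfrac12$, this yields precisely $\tfrac{L}{2}\norm{y-x}^2$, completing the argument. I do not expect a genuine obstacle here: the only points requiring a little care are justifying the applicability of the fundamental theorem of calculus (which follows from absolute continuity of $g$, guaranteed by continuity of $g'$ on the compact interval $[0,1]$) and verifying that differentiability of $f$ merely on the segment $[x,y]$—rather than on an open set—still legitimizes the one-sided chain rule computation for $g$ along that segment. Both are routine once the reduction to $g$ is in place, so the essence of the proof is simply the integral representation followed by a termwise Lipschitz estimate.
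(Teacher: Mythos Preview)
Your argument is correct and is exactly the standard proof of the descent lemma via the one-dimensional restriction $g(t)=f(x+t(y-x))$, the fundamental theorem of calculus, and a pointwise Lipschitz estimate on $g'(t)-g'(0)$. The paper itself does not supply a proof of this statement: it simply cites it from \cite[Lemma~A.11]{solodovbook}, so there is nothing further to compare.
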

The second lemma established in \cite[Section~3]{kmt22.1} is crucial in the convergence analysis of the general linesearch methods developed in this paper.

\begin{Lemma}\label{stationary point lemma}
Let $\set{x^k}$ and $\set{d^k}$ be sequences in $\R^n$ satisfying  the condition
\begin{align}\label{series is finite}
\sum_{k=1}^\infty\norm{x^{k+1}-x^k}\norm{d^k}<\infty.
\end{align} 
If $\bar x$ is an accumulation point of $\set{x^k}$ and if the origin is an accumulation point of $\set{d^k}$, then there exists an infinite set $J\subset\N$ such that
\begin{align}\label{relation dk xk 3}
x^k\overset{J}{\rightarrow}\bar x\;\mbox{ and }\;d^k\overset{J}{\rightarrow}0.
\end{align}
\end{Lemma}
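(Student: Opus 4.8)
The plan is to argue by contradiction, converting the two separate accumulation-point hypotheses into a single quantitative dichotomy and then extracting a contradiction from the summability condition \eqref{series is finite}. First I would record the elementary reformulation of the goal: an infinite set $J$ satisfying \eqref{relation dk xk 3} exists if and only if for every $\ve>0$ and $\delta>0$ the index set $\set{k:\norm{x^k-\ox}<\ve,\ \norm{d^k}<\delta}$ is infinite (the forward direction is immediate, and the backward direction follows by a diagonal extraction with $\ve=\delta=1/m$). Assuming the conclusion fails, this reformulation supplies constants $\ve_0>0$, $\delta_0>0$ and an index $k_0$ such that, for all $k\ge k_0$,
\[
\norm{x^k-\ox}<\ve_0\ \Longrightarrow\ \norm{d^k}\ge\delta_0.
\]

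Next I would exploit the two accumulation hypotheses through this implication. Since $\ox$ is an accumulation point of $\set{x^k}$, there are infinitely many indices $k$ with $x^k\in\B(\ox,\ve_0/2)$. Since $0$ is an accumulation point of $\set{d^k}$, there are infinitely many indices $k$ with $\norm{d^k}<\delta_0$, and for all such $k\ge k_0$ the displayed implication forces $\norm{x^k-\ox}\ge\ve_0$. Thus the tail of $\set{x^k}$ lies infinitely often deep inside $\B(\ox,\ve_0)$ and infinitely often outside it, so it must cross the shell $\set{\ve_0/2\le\norm{x-\ox}<\ve_0}$ infinitely many times.

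The heart of the argument is to turn these crossings into a divergent lower bound for the series in \eqref{series is finite}. I would construct disjoint ``upcrossing'' blocks $[p_i,q_i)$ recursively: choose $p_i\ge k_0$ with $x^{p_i}\in\B(\ox,\ve_0/2)$ and $p_i>q_{i-1}$, and let $q_i>p_i$ be the first index with $\norm{x^{q_i}-\ox}\ge\ve_0$ (which exists because infinitely many iterates lie outside $\B(\ox,\ve_0)$). By construction $\norm{x^k-\ox}<\ve_0$ for $p_i\le k<q_i$, so $\norm{d^k}\ge\delta_0$ on the whole block, while the triangle inequality together with a telescoping estimate gives
\[
\sum_{k=p_i}^{q_i-1}\norm{x^{k+1}-x^k}\ge\norm{x^{q_i}-x^{p_i}}\ge\ve_0-\tfrac{\ve_0}{2}=\tfrac{\ve_0}{2}.
\]
Hence each block contributes at least $\delta_0\ve_0/2$ to $\sum_k\norm{x^{k+1}-x^k}\norm{d^k}$, and since the blocks are disjoint and infinite in number the series diverges, contradicting \eqref{series is finite}.

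The routine parts are the diagonal extraction and the telescoping bound; the step requiring the most care is the recursive construction of the upcrossing blocks, where I must verify that each $q_i$ is finite, that the blocks stay disjoint, and that every intermediate iterate remains inside $\B(\ox,\ve_0)$ so that the uniform bound $\norm{d^k}\ge\delta_0$ genuinely applies across the entire block. This is precisely where the two accumulation hypotheses and the summability condition are engaged simultaneously, and keeping the index bookkeeping correct is the main obstacle.
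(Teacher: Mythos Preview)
Your argument is correct. The reformulation via the diagonal extraction, the contradiction hypothesis yielding the implication $\norm{x^k-\ox}<\ve_0\Rightarrow\norm{d^k}\ge\delta_0$ for $k\ge k_0$, the recursive construction of disjoint upcrossing blocks $[p_i,q_i)$, and the telescoping lower bound $\sum_{k=p_i}^{q_i-1}\norm{x^{k+1}-x^k}\ge\ve_0/2$ all go through as stated; the only bookkeeping to watch is ensuring $p_i\ge k_0$ from the start so that the implication applies on every block, which you do.

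As for the comparison: the present paper does not actually supply a proof of this lemma. It is quoted as ``established in \cite[Section~3]{kmt22.1}'' and used as a black box in the proof of Theorem~\ref{theo c11}. The argument in the cited source follows essentially the same shell-crossing idea you outline---assuming the simultaneous subsequence fails, one obtains a uniform lower bound on $\norm{d^k}$ along excursions of $\{x^k\}$ from a small ball around $\ox$ to the exterior of a larger one, and the accumulated path length on those excursions forces the series to diverge. Your write-up is a faithful and self-contained version of that standard proof.
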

Next we recall the classical results from \cite[Section~8.3.1]{pangbook2} that describe important properties of accumulation points
generated by a sequence satisfying a limit condition introduced by Ostrowski \cite{ostrowski}.
\begin{Lemma}\label{lemma: ostrowski}
Let $\set{x^k}\subset\R^n$ be a sequence satisfying the Ostrowski condition
\begin{align}\label{8.3.8}
\lim_{k\rightarrow\infty}\|x^{k+1}-x^k\|=0.
\end{align}
Then the following assertions are fulfilled:

{\bf(i)} If $\set{x^k}$ is bounded, then the set of accumulation points of $\set{x^k}$ is nonempty, compact, and connected in $\R^n$.

{\bf(ii)} If $\set{x^k}$ has an isolated accumulation point, then this sequence converges to it.
\end{Lemma}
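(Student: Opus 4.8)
The plan is to treat the two assertions separately, both resting on a common \emph{transit-point} argument that exploits the Ostrowski condition \eqref{8.3.8}: since consecutive steps shrink to zero, the sequence cannot jump across a band of positive width in a single step, and hence must deposit accumulation points inside any such band it crosses infinitely often.

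For (i), let $A$ denote the set of accumulation points of $\set{x^k}$. Nonemptiness is immediate from the Bolzano--Weierstrass theorem applied to the bounded sequence. Closedness of $A$ is a standard property of accumulation-point sets (a limit of accumulation points is again one), and since $A$ is contained in any closed ball enclosing the sequence it is bounded, hence compact. The substantive claim is connectedness, which I would establish by contradiction. Suppose $A = F_1 \cup F_2$ with $F_1,F_2$ nonempty, disjoint, and compact; then $2\rho := \dist(F_1,F_2) > 0$. Introduce the disjoint open neighborhoods $V_i := \set{x \mid \dist(x,F_i) < \rho/2}$, so that $A \subset V_1 \cup V_2$. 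Since each $F_i$ contains accumulation points, both $V_1$ and $V_2$ capture infinitely many iterates, whence the sequence alternates between them infinitely often. Using \eqref{8.3.8}, fix $N$ so large that $\norm{x^{k+1} - x^k} < \rho/2$ for all $k \ge N$. Then no single step can carry an iterate from $V_1$ directly into $V_2$, so each passage between the neighborhoods produces an index $k$ with $x^k \notin V_1 \cup V_2$; hence infinitely many transit iterates lie in the closed set $\R^n \setminus (V_1 \cup V_2)$. Being a subsequence of a bounded sequence, they admit an accumulation point $z \in A$ with $z \notin V_1 \cup V_2$, contradicting $A \subset V_1 \cup V_2$ and proving (i).

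For (ii), let $\bar x$ be an isolated accumulation point, and choose $r > 0$ such that the closed ball $\overline{\mathbb{B}}(\bar x, r)$ contains no accumulation point other than $\bar x$. I argue again by contradiction: if $\set{x^k}$ does not converge to $\bar x$, then infinitely many iterates lie outside $\mathbb{B}(\bar x, r)$, while infinitely many lie inside $\mathbb{B}(\bar x, r/2)$ since $\bar x$ is an accumulation point. Applying the same transit-point reasoning with the annulus $\set{x \mid r/2 \le \norm{x - \bar x} \le r}$ playing the role of the crossing band, the step-size bound from \eqref{8.3.8} forces infinitely many iterates into this annulus. Any accumulation point $z$ of that subsequence then satisfies $r/2 \le \norm{z - \bar x} \le r$, so $z \neq \bar x$ yet $z \in \overline{\mathbb{B}}(\bar x, r)$, contradicting the choice of $r$. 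The annulus is automatically compact, so no global boundedness of $\set{x^k}$ is needed here.

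The main obstacle is the connectedness part of (i), and specifically the rigorous formalization of the \emph{no-jump} mechanism: one must guarantee that infinitely many iterates actually land strictly between the two neighborhoods, which requires carefully combining the infinitely-often alternation between $V_1$ and $V_2$ with the eventual smallness of $\norm{x^{k+1}-x^k}$. Everything else reduces to routine topology.
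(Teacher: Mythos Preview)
Your proof is correct. The paper itself does not prove this lemma at all: it merely states the result and cites \cite[Section~8.3.1]{pangbook2} (Facchinei--Pang), so there is no ``paper's proof'' to compare against beyond that reference. What you have supplied is exactly the standard self-contained argument that underlies the cited result: Bolzano--Weierstrass for nonemptiness, routine topology for compactness, and the transit-point (no-jump) mechanism for both connectedness in (i) and the isolated-point convergence in (ii).

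Two minor points worth tightening if you write this out in full. First, in (ii) your claim that ``infinitely many iterates lie outside $\mathbb{B}(\bar x,r)$'' deserves one line of justification: if only finitely many iterates were outside $\mathbb{B}(\bar x,r)$, the tail would be bounded in $\overline{\mathbb{B}}(\bar x,r)$, whose only accumulation point is $\bar x$, forcing $x^k\to\bar x$ after all. Second, in the connectedness argument you should make explicit that $\dist(V_1,V_2)\ge 2\rho-\rho/2-\rho/2=\rho$, so a step of size less than $\rho/2$ cannot jump from $V_1$ to $V_2$; you allude to this but it is the crux of the no-jump step. With these cosmetic additions the argument is complete.
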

The version of the {\em Kurdyka-\L ojasiewicz property} below is taken from Absil et al. \cite[Theorem~3.4]{absil05}.
\begin{Definition}\rm \label{KL ine}\rm
Let $f:\R^n\rightarrow\R$ be a differentiable function. We say that $f$ satisfies the \textit{KL property} at $\bar{x}\in\R^n$ if there exist a number $\eta>0$, a neighborhood $U$ of $\bar{x}$, and a nondecreasing function $\psi:(0,\eta)\rightarrow(0,\infty)$ such that the function $1/\psi$ is integrable over $(0,\eta)$ and we have
\begin{align}\label{kl absil}
 \norm{\nabla f(x)}\ge\psi\big(f(x)-f(\bar{x})\big)\;\mbox{ for all }\;x\in U\;\mbox{ with }\;f(\bar x)<f(x)<f(\bar x)+\eta.
\end{align}
\end{Definition}
\begin{Remark}\rm\label{algebraic} If $f$ satisfies the KL property at $\bar x$ with a neighborhood $U$, it is clear that the same property holds for any $x\in U$ where $f(x)=f(\bar x)$. It has been realized that the KL property is satisfied in broad settings. In particular, it holds at every {\em nonstationary point} of $f$; see \cite[Lemma~2.1~and~Remark~3.2(b)]{attouch10}.  Furthermore, it is proved in the seminal paper by \L ojasiewicz \cite{lojasiewicz65} that any analytic function $f:\R^n\rightarrow\R$ satisfies the KL property at every point $\ox$ with $\psi(t)~=~Mt^{q}$ for some $q\in [0,1)$. As demonstrated in \cite[Section~2]{kmt22.1}, the KL property formulated in Attouch et al. \cite{attouch10} is stronger than the one in Definition~\ref{KL ine}. Typical smooth functions that satisfy the KL property from \cite{attouch10}, and hence the one from Definition~\ref{KL ine}, are smooth {\em semialgebraic} functions and also those from the more general class of functions known as {\em definable in o-minimal structures}; see \cite{attouch10,attouch13,kurdyka}. The latter property is fulfilled, e.g., in important models arising in deep neural networks, low-rank matrix recovery, principal component analysis, and matrix completion as discussed in \cite[Section~6.2]{bolte21}.
\end{Remark}
Next we present, based on \cite{absil05}, some descent-type conditions ensuring the global convergence of iterates for smooth functions that satisfy the KL property. 
\begin{Proposition}\label{general convergence under KL}
Let $f:\R^n\rightarrow\R$ be a $\mathcal{C}^1$-smooth function, and let the sequence of iterations $\set{x^k}\subset\R^n$ satisfy the following conditions:
\begin{itemize}
\item[\bf(H1)] {\rm(primary descent condition)}. There exists $\sigma>0$ such that for sufficiently large $k\in\N$ we have 
\begin{align*}
 f(x^k)-f(x^{k+1})\ge\sigma\norm{\nabla f(x^k)}\norm{x^{k+1}-x^k}.
\end{align*}
\item[\bf(H2)] {\rm(complementary descent condition)}. For sufficiently large $k\in\N$, we have
\begin{align*}
 \big[f(x^{k+1})=f(x^k)\big]\Longrightarrow [x^{k+1}=x^k].
\end{align*}
\end{itemize}
If $\bar x$ is an accumulation point of $\set{x^k}$  and $f$ satisfies the KL property at $\bar x$, then $x^k\rightarrow\bar x$ as $k\to\infty$.
\end{Proposition}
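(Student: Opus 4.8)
The plan is to exploit the monotonicity built into (H1) together with the KL inequality in order to show that the whole sequence has finite length, i.e.\ $\sum_k\norm{x^{k+1}-x^k}<\infty$, so that it is Cauchy and therefore converges to its unique accumulation value $\bar x$. First I would observe that (H1) forces the scalar sequence $\set{f(x^k)}$ to be nonincreasing for large $k$, since the right-hand side of (H1) is nonnegative. Because $\bar x$ is an accumulation point, $f$ is continuous, and a nonincreasing sequence possessing a convergent subsequence converges to the same limit, I would conclude that $f(x^k)\downarrow f(\bar x)$ and, in particular, $f(x^k)\ge f(\bar x)$ for all large $k$.

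Next I would split into two cases according to whether the value $f(\bar x)$ is attained along the tail. If $f(x^{k_0})=f(\bar x)$ for some large $k_0$, then monotonicity forces $f(x^k)=f(\bar x)$ for every $k\ge k_0$, whence $f(x^{k+1})=f(x^k)$ and (H2) yields $x^{k+1}=x^k$ for all such $k$. Thus the tail is constant, and since $\bar x$ is an accumulation point this constant must equal $\bar x$, giving convergence immediately. This is the only place where (H2) is used: it prevents the iterates from drifting along the level set $\set{f=f(\bar x)}$.

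The substantive case is $f(x^k)>f(\bar x)$ for all large $k$, where I would invoke the KL property. Writing $r_k:=f(x^k)-f(\bar x)>0$ and using $\norm{\nabla f(x^k)}\ge\psi(r_k)$ (valid once $x^k\in U$ and $r_k<\eta$), condition (H1) gives $\norm{x^{k+1}-x^k}\le\sigma^{-1}(r_k-r_{k+1})/\psi(r_k)$. Since $\psi$ is nondecreasing and $r_{k+1}\le r_k$, the elementary estimate $(r_k-r_{k+1})/\psi(r_k)\le\int_{r_{k+1}}^{r_k}\d t/\psi(t)=\Psi(r_k)-\Psi(r_{k+1})$, where $\Psi(s):=\int_0^s\d t/\psi(t)$ is finite by the assumed integrability of $1/\psi$, makes the step lengths telescope, yielding $\sum_{j=N}^{k}\norm{x^{j+1}-x^j}\le\sigma^{-1}\Psi(r_N)$.

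The main obstacle, as always in KL arguments, is guaranteeing that the iterates never leave the neighborhood $U$ on which the KL inequality is available, so that the estimate above may legitimately be applied at every step. I would handle this by a trapping induction: fix $\rho>0$ with $\overline{\B}(\bar x,\rho)\subset U$, and use $x^{k_j}\to\bar x$, $r_k\to0$, and $\Psi(r_k)\to0$ to select an index $N$ along the subsequence with $x^N\in\B(\bar x,\rho/2)$, $r_N<\eta$, and $\sigma^{-1}\Psi(r_N)<\rho/2$. Assuming inductively that $x^N,\dots,x^k\in\overline{\B}(\bar x,\rho)\subset U$, the telescoped bound gives $\norm{x^{k+1}-\bar x}\le\norm{x^N-\bar x}+\sigma^{-1}\Psi(r_N)<\rho$, closing the induction. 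Consequently every tail iterate lies in $U$, the full series $\sum_k\norm{x^{k+1}-x^k}$ converges, and $\set{x^k}$ is Cauchy; its limit necessarily coincides with the accumulation point $\bar x$.
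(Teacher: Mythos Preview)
The paper does not supply its own proof of this proposition: it is stated as a known result ``based on \cite{absil05}'' and no argument is given in the text. Your proof is correct and is precisely the standard KL argument going back to Absil--Mahony--Andrews: monotonicity of $f(x^k)$ from (H1), reduction via (H2) to the case $f(x^k)>f(\bar x)$, the telescoping estimate $\norm{x^{k+1}-x^k}\le\sigma^{-1}\big(\Psi(r_k)-\Psi(r_{k+1})\big)$ obtained from the KL inequality and the monotonicity of $\psi$, and the trapping induction that keeps the iterates inside the KL neighborhood. There is nothing to compare against here; your write-up matches the approach the paper is citing.
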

When the sequence under consideration is generated by a linesearch method and satisfies some conditions stronger than (H1) and (H2) in 
Proposition~\ref{general convergence under KL}, its convergence rates  are established in \cite[Proposition~2.4]{kmt22.1} under the KL property with $\psi(t)=Mt^{q}$ as given below.
\begin{Proposition}\label{general rate}
Let $f:\R^n\rightarrow\R$ be a $\mathcal{C}^1$-smooth function, and let the sequences $\set{x^k}\subset\R^n,\set{\tau_k}\subset[0,\infty),\set{d^k}\subset\R^n$ satisfy the iterative condition $x^{k+1}=x^k+\tau_kd^k$ for all $k\in\N$. Assume that for sufficiently large $k\in\N$, we have $x^{k+1}\ne x^k$ and the estimates
\begin{align}\label{two conditions}
f(x^k)-f(x^{k+1})\ge \beta \tau_k\norm{d^k}^2\;\text{ and }\;\norm{\nabla f(x^k)}\le \alpha\norm{d^k},
\end{align}
where $\alpha,\beta$ are some positive constants. Suppose in addition that the sequence $\set{\tau_k}$ is bounded away from $0$ $($i.e., there is some $\bar \tau>0$ such that $\tau_k\ge \bar \tau$ for sufficiently large $k\in\N)$, that $\bar x$ is an accumulation point of $\set{x^k}$, and that $f$ satisfies the KL property at $\bar x$ with $\psi(t)=Mt^{q}$ for some $M>0$ and $q\in(0,1)$. Then the following convergence rates are guaranteed:
\begin{itemize}\vspace*{-0.05in}
\item[\bf(i)] If $q\in(0,1/2]$, then the sequence $\set{x^k}$ converges linearly to $\bar x$.\vspace*{-0.05in}
\item[\bf(ii)]If $q\in(1/2,1)$, then we have the estimate
\begin{align*}
\norm{x^k-\bar x}=\mathcal{O}\brac{ k^{-\frac{1-q}{2q-1}}}.
\end{align*}
\end{itemize}
\end{Proposition}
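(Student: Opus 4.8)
The plan is to reduce the statement to a scalar recursion for the function-value gaps $r_k:=f(x^k)-f(\bar x)$ together with a length estimate bounding $\|x^k-\bar x\|$ by a power of $r_k$, and then to feed the recursion into a standard \L ojasiewicz-type sequence lemma. First I would verify that the standing hypotheses force conditions (H1) and (H2) of Proposition~\ref{general convergence under KL}. Since $\|x^{k+1}-x^k\|=\tau_k\|d^k\|$, the bound $\|\nabla f(x^k)\|\le\alpha\|d^k\|$ gives $\|\nabla f(x^k)\|\,\|x^{k+1}-x^k\|\le\alpha\tau_k\|d^k\|^2$, which with the first inequality in \eqref{two conditions} yields (H1) with $\sigma=\beta/\alpha$; and if $f(x^{k+1})=f(x^k)$, that same inequality forces $\|d^k\|=0$ and hence $x^{k+1}=x^k$, giving (H2). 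Thus $x^k\to\bar x$. The descent inequality makes $\{f(x^k)\}$ nonincreasing, so by continuity $r_k\downarrow 0$ and $r_k\ge 0$; moreover $r_k=0$ would force $x^{k+1}=x^k$ against the hypothesis $x^{k+1}\ne x^k$, so $r_k>0$ for all large $k$ and the KL inequality from Definition~\ref{KL ine} applies at $x^k$.

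Next I would exploit the KL inequality twice. For large $k$ we have $Mr_k^q=\psi(r_k)\le\|\nabla f(x^k)\|\le\alpha\|d^k\|$, hence $\|d^k\|\ge (M/\alpha)r_k^q$; plugging this into $r_k-r_{k+1}\ge\beta\tau_k\|d^k\|^2\ge\beta\bar\tau\|d^k\|^2$ produces the key recursion
\begin{equation*}
r_k-r_{k+1}\ge C\,r_k^{2q},\qquad C:=\frac{\beta\bar\tau M^2}{\alpha^2}.
\end{equation*}
Working instead with the concave desingularizing function $\varphi(t):=\frac{1}{M(1-q)}t^{1-q}$, for which $\varphi'=1/\psi$, the concavity estimate $\varphi(r_k)-\varphi(r_{k+1})\ge\varphi'(r_k)(r_k-r_{k+1})$ combined with $\psi(r_k)\le\alpha\|d^k\|$ and the descent inequality gives
\begin{equation*}
\|x^{k+1}-x^k\|=\tau_k\|d^k\|\le\frac{\alpha}{\beta}\big(\varphi(r_k)-\varphi(r_{k+1})\big).
\end{equation*}
Telescoping this bound and using $x^k\to\bar x$ yields the length estimate $\|x^k-\bar x\|\le\sum_{j\ge k}\|x^{j+1}-x^j\|\le\frac{\alpha}{\beta}\varphi(r_k)=C_1\,r_k^{1-q}$. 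Note that no upper bound on $\tau_k$ is needed here, only $\tau_k\ge\bar\tau$.

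It then remains to read off the decay of $r_k$ from the recursion and transport it through the length estimate. When $q\in(0,1/2]$ we have $2q\le 1$, so eventually $r_k^{2q}\ge r_k$ and the recursion forces at least geometric decay $r_k\le C_2\rho^k$ with $\rho\in(0,1)$; the length estimate then gives $\|x^k-\bar x\|\le C_1 C_2^{1-q}\rho^{(1-q)k}$, proving the linear convergence in (i). When $q\in(1/2,1)$ we have $2q>1$, the sublinear branch of the sequence lemma gives $r_k=\mathcal{O}\big(k^{-1/(2q-1)}\big)$, and substitution into $\|x^k-\bar x\|\le C_1 r_k^{1-q}$ produces $\|x^k-\bar x\|=\mathcal{O}\big(k^{-(1-q)/(2q-1)}\big)$, proving (ii). I expect the main obstacle to be precisely the sublinear case of the sequence lemma: turning the difference inequality $r_k-r_{k+1}\ge C r_k^{2q}$ into the sharp polynomial rate requires a careful comparison argument (monotonicity of $t\mapsto t^{-2q}$, or an induction on $r_k^{1-2q}$) and care that the exponent is exactly the claimed one, whereas the reductions above and the geometric case are routine.
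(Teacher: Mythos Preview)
The paper does not supply a proof of this proposition; it is quoted verbatim from \cite[Proposition~2.4]{kmt22.1} and stated here as a preliminary result without argument. There is therefore nothing in the present paper to compare your proposal against line by line.

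That said, your outline is the standard Attouch--Bolte style KL rate analysis and is correct: verify (H1)--(H2) to get $x^k\to\bar x$ and $r_k\downarrow 0$ with $r_k>0$; combine the KL inequality $\|\nabla f(x^k)\|\ge Mr_k^q$ with $\|\nabla f(x^k)\|\le\alpha\|d^k\|$ and the sufficient-decrease bound to obtain the scalar recursion $r_k-r_{k+1}\ge C r_k^{2q}$; use concavity of the desingularizer $\varphi(t)=\tfrac{1}{M(1-q)}t^{1-q}$ to telescope a tail-length estimate $\|x^k-\bar x\|\le \tfrac{\alpha}{\beta}\varphi(r_k)$; and finally invoke the classical sequence lemma on $r_k-r_{k+1}\ge C r_k^{2q}$. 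This is precisely the machinery underlying the cited result, so your route matches what one would find in \cite{kmt22.1}. One small point you might make explicit in the linear branch: when $q\le 1/2$ the inequality $r_{k+1}\le (1-C)r_k$ is only useful if $C<1$, which you can always arrange by shrinking the KL constant $M$ (the KL inequality with a smaller $M$ is weaker), or alternatively by observing that $C\ge 1$ would force $r_{k+1}\le 0$, contradicting $r_{k+1}>0$ and hence rendering that subcase vacuous.
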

\begin{Remark}\rm \hlabel{rmk two KL}
Observe that the two conditions in \eqref{two conditions} together with the boundedness away from $0$ of $\set{\tau_k}$ yield assumptions (H1), (H2) in Proposition~\ref{general convergence under KL}. Indeed, (H1) is verified by the following inequalities:
\begin{align*}
f(x^k)-f(x^{k+1})&\ge \beta\tau_k\norm{d^k}^2=\beta\norm{\tau_k d^k}\norm{d^k}\\
&\ge \dfrac{\beta}{\alpha}\norm{x^{k+1}-x^k}\norm{\nabla f(x^k)}.
\end{align*}
In addition, since $\{\tau_k\}$ is bounded away from $0$, there is $\bar\tau>0$ such that $\tau_k\ge \bar \tau$ for sufficiently large $k\in\N$. Then for such $k$, the condition $f(x^{k+1})=f(x^k)$ implies that $d^k=0$  by the first inequality in \eqref{two conditions}, and hence $x^{k+1}=x^k$ by the iterative procedure $x^{k+1}=x^k+\tau_kd^k.$ This also verifies (H1).
\end{Remark}\vspace*{-0.2in}

\section{Global and Local Approximations of Gradients}\label{sec:3}

This section is devoted to analyzing several methods for approximating gradients of a smooth function $f:\R^n\rightarrow\R$ by using only information about function values that frequently appears in derivative-free optimization. Methods of this type include, in particular, finite differences \cite[Section~9]{nocedalbook}, the Gupal estimation \cite{hare13}, and gradient estimation via linear interpolation \cite{berahas22}. We construct two types of approximations that cover all these methods.
\begin{Definition}\rm\label{defi approx}
Let $f:\R^n\rightarrow\R$ be a $\mathcal{C}^1$-smooth function. A mapping $\mathcal{G}:\R^n\times (0,\infty)\rightarrow\R^n$ is :

{\bf(i)} A {\em global approximation} of $\nabla f$ if there is a constant $C>0$ such that 
\begin{align}\label{global approx}
\norm{\mathcal{G}(x,\delta)-\nabla f(x)}\le C\delta\;\text{ for any }\;(x,\delta)\in\R^n\times (0,\infty).
\end{align}

{\bf(ii)} A {\em local approximation} of $\nabla f$ if for any bounded set $\Omega\subset\R^n$ and any $\Delta>0$, there is $C>0$ with
\begin{align}\label{local approx}
\norm{\mathcal{G}(x,\delta)-\nabla f(x)}\le C\delta\;\text{ for any }\; (x,\delta)\in\Omega\times (0,\Delta].
\end{align}
\end{Definition}
\begin{Remark}\rm\label{rmk error bound}
We have the following observations related to Definition \ref{defi approx}:

{\bf(i)} If $\mathcal{G}$ is a global approximation of $\nabla f$, then it is also a local approximation of $\nabla f$.

{\bf(ii)} Assume that $\mathcal{G}$ is a local approximation of $\nabla f$ and that $x\in\R^n$. Then we deduce from \eqref{local approx} with $\Omega=\set{x}$ and any $\Delta>0$ the condition
\begin{align}\label{limsup bound}
\limsup_{\delta\downarrow0}\dfrac{\norm{\mathcal{G}(x,\delta)-\nabla f(x)}}{\delta}<\infty.
\end{align}
\end{Remark}
Next we recall the two standard types of finite differences  taken from \cite[Section~9]{nocedalbook}, which serve as typical examples of the approximations in Definition~\ref{defi approx}.\vspace*{0.05in}

$\bullet$ {\em Forward finite difference}: 
\begin{align}\label{forward}
\mathcal{G}(x,\delta):=\dfrac{1}{\delta}\sum_{i=1}^n\brac{f(x+\delta e_i)-f(x)}e_i\text{ for any }(x,\delta)\in\R^n\times (0,\infty).
\end{align}

$\bullet$ {\em Central finite difference}: 
\begin{align}\label{central}
\mathcal{G}(x,\delta):=\dfrac{1}{2\delta}\sum_{i=1}^n\brac{f(x+\delta e_i)-f(x-\delta e_i)}e_i\text{ for any }(x,\delta)\in\R^n\times (0,\infty).
\end{align}

\begin{Remark}\rm Let us now recall some results on the error bounds for the two types of finite differences that are mentioned above.

{\bf(i)} The global error bound for the forward finite difference (see, e.g., \cite[Theorem~2.1]{berahas22} and \cite[Section~8]{nocedalbook}) shows that it is a global approximation of $\nabla f$ when $f\in\mathcal{C}^{1,1}_L$. The local error bound for the forward finite difference is also formulated in \cite[Exercise~9.13]{nocedalbook}.

{\bf(ii)} On the other hand, the global error bound for the central finite difference (see, e.g. \cite[Theorem~2.2]{berahas22} and \cite[Lemma~9.1]{nocedalbook}) requires that $f$ is twice continuously differentiable with a Lipschitz continuous Hessian, which is a restrictive assumption. 
\end{Remark}

For completeness, we present a short proof showing that both types of finite differences are global approximations of $\nabla f$ when $f\in\mathcal{C}^{1,1}_L$ and are local approximations of $\nabla f$ when $f\in\mathcal{C}^{1,1}$.

\begin{Proposition}\label{for err bound}
Let $f:\R^n\rightarrow\R$ be a $\mathcal{C}^1$-smooth function. Then the following hold:

{\bf(i)} Given $x\in\R^n$ and $\delta>0$, if $\nabla f$ is Lipschitz continuous on $\overline{\mathbb{B}}(x,\delta)$ the with the constant $L>0$, then both the forward finite difference \eqref{forward} and the central finite difference \eqref{central} satisfy the estimate
\begin{align}\label{general ball error}
\norm{\mathcal{G}(x,\delta)-\nabla f(x)}\le \dfrac{L\sqrt{n}\delta}{2}.
\end{align}

{\bf(ii)} If $\nabla f$ is globally Lipschitz continuous with the constant $L>0$, then both the forward finite difference \eqref{forward} and the central finite difference \eqref{central} satisfy the estimate
\begin{align}\label{glo err bound for}
\norm{\mathcal{G}(x,\delta) -\nabla f(x)}\le \dfrac{L\sqrt{n}\delta}{2}\; \text{ for any }\;(x,\delta)\in\R^n\times(0,\infty).
\end{align}

{\bf(iii)} If $\nabla f$ is locally Lipschitz continuous, then for any bounded set $\Omega\subset\R^n$ and for any $\Delta>0$, there exists a positive number $L$ such that both the forward finite difference \eqref{forward} and the central finite difference \eqref{central} satisfy the estimate
\begin{align}\label{local err bound for}
\norm{\mathcal{G}(x,\delta) -\nabla f(x)}\le \dfrac{L\sqrt{n}\delta}{2}\;\text{ for any }\;(x,\delta)\in\Omega\times (0,\Delta].
\end{align}
\end{Proposition}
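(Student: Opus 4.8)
The plan is to establish part (i) as the core estimate and then obtain parts (ii) and (iii) as essentially immediate corollaries. For (i), the key observation is that both finite differences can be analyzed coordinate by coordinate, reducing everything to repeated applications of Lemma~\ref{lemma descent}. First I would note that the $i^{\rm th}$ component of $\mathcal{G}(x,\delta)-\nabla f(x)$ depends only on the values of $f$ along the line segment through $x$ in the direction $e_i$, and that such a segment lies inside $\overline{\mathbb{B}}(x,\delta)$ because $\norm{\pm\delta e_i}=\delta$. Thus the Lipschitz hypothesis on $\overline{\mathbb{B}}(x,\delta)$ is exactly what is needed to invoke Lemma~\ref{lemma descent} on each such segment.

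For the forward difference, the $i^{\rm th}$ component of the error is $\frac{1}{\delta}\brac{f(x+\delta e_i)-f(x)}-\dotproduct{\nabla f(x),e_i}$; applying Lemma~\ref{lemma descent} with $y=x+\delta e_i$ and dividing by $\delta$ bounds the absolute value of this component by $L\delta/2$. For the central difference, I would write $f(x+\delta e_i)-f(x-\delta e_i)-2\delta\dotproduct{\nabla f(x),e_i}$ as the difference of the two quantities $f(x\pm\delta e_i)-f(x)\mp\delta\dotproduct{\nabla f(x),e_i}$, apply Lemma~\ref{lemma descent} to each (with $y=x+\delta e_i$ and $y=x-\delta e_i$, respectively), and combine via the triangle inequality; dividing by $2\delta$ again yields the per-coordinate bound $L\delta/2$. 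In both cases, summing the squared component errors over $i=1,\ldots,n$ and taking square roots produces the factor $\sqrt{n}$ and gives the desired estimate \eqref{general ball error}.

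Part (ii) is immediate: global Lipschitz continuity with constant $L$ means $\nabla f$ is Lipschitz with this same $L$ on every closed ball $\overline{\mathbb{B}}(x,\delta)$, so (i) applies uniformly over all $(x,\delta)\in\R^n\times(0,\infty)$. For part (iii), I would manufacture a single compact set on which $\nabla f$ is Lipschitz. Concretely, let $K:=\set{y\in\R^n\mid \dist\brac{y,\overline{\Omega}}\le\Delta}$, which is closed and bounded, hence compact, because $\Omega$ is bounded. For every $x\in\Omega$ and every $\delta\in(0,\Delta]$ one has $\overline{\mathbb{B}}(x,\delta)\subset K$, and local Lipschitz continuity (i.e., Lipschitz continuity on every compact set) furnishes a constant $L>0$ valid on all of $K$. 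Applying (i) with this $L$ on each ball $\overline{\mathbb{B}}(x,\delta)\subset K$ then yields \eqref{local err bound for}.

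The computations here are routine, so the only points demanding care are the correct two-term decomposition for the central difference and the verification that the relevant segments (respectively balls) lie within the region where the Lipschitz bound is available. In part (iii), the one genuinely substantive step is to pass from the local/pointwise Lipschitz property to a single uniform constant over $\Omega\times(0,\Delta]$; this is precisely what compactness of $K$ together with the definition of local Lipschitz continuity delivers.
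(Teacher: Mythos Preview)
Your proposal is correct and follows essentially the same approach as the paper's proof: coordinate-wise application of Lemma~\ref{lemma descent} for (i), with the two-term decomposition and triangle inequality for the central difference, and then (ii) and (iii) as direct consequences. The only cosmetic difference is in (iii), where the paper enlarges $\Omega$ to a closed ball of radius $r+\Delta$ rather than your distance-based set $K$, but both constructions serve the identical purpose of producing a single compact set containing every $\overline{\mathbb{B}}(x,\delta)$.
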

\begin{proof}
First we verify (i) for each type of the aforementioned finite difference and then employ (i) to justify (ii) and (iii) for both types.

(i) Take any $x\in\R^n,\delta>0$ and assume that $\nabla f$ is
Lipschitz continuous on $\overline{\mathbb{B}}(x,\delta)$ with the constant $L>0$. Consider first the case where $\mathcal{G}$ is given by the forward finite difference \eqref{forward}. Then for any $i=1,\ldots,n$ we get by employing Lemma~\ref{lemma descent} that
\begin{align}
&  \abs{f(x+\delta e_i)-f(x)-\dotproduct{\nabla f(x),x+\delta e_i-x}}\le \dfrac{L}{2}\norm{x+\delta e_i-x}^2=\dfrac{L\delta^2}{2},
\end{align}
which is clearly equivalent to
\begin{align*}
\abs{ \dfrac{1}{\delta}\brac{f\brac{x+\delta e_i}-f\brac{x}}-\dfrac{\partial f}{\partial x_i}(x)}\le \dfrac{L\delta}{2}.
\end{align*}
Since the latter inequality holds for all $i=1,\ldots,n$, we deduce that
\begin{align*}
\norm{\mathcal{G}(x,\delta)-\nabla f(x)}&=\sqrt{\sum_{i=1}^n \brac{\dfrac{1}{\delta}\brac{f(x+\delta e_i)-f\brac{x}}-\dfrac{\partial f}{\partial x_i}(x)}^2}\le \dfrac{L\sqrt{n}\delta}{2},
\end{align*}
which therefore verifies  estimate \eqref{general ball error}.

Assume now that $\mathcal{G}$ is given by the central finite difference \eqref{central}. Employing Lemma~\ref{lemma descent} gives us for any $i=1,\ldots,n$ the two estimates
\begin{align*}
& \abs{f(x+\delta e_i)-f(x)-\dotproduct{\nabla f(x),(x+\delta e_i)-x}}\le \dfrac{L\delta^2}{2},\\
& \abs{f(x)-f(x-\delta e_i)-\dotproduct{\nabla f(x),x-(x-\delta e_i)}}\le \dfrac{L\delta^2}{2}.
\end{align*}
Summing up the above estimates and using the triangle inequality, we deduce that
\begin{align*}
\abs{f(x+\delta e_i)-f(x-\delta e_i)-2\dotproduct{\nabla f(x),\delta e_i}}\le L\delta^2,
\end{align*}
which implies in turn the conditions
\begin{align*}
\abs{ \dfrac{1}{2\delta}\brac{f\brac{x+\delta e_i}-f\brac{x-\delta e_i}}-\dfrac{\partial f}{\partial x_i}(x)}\le \dfrac{L\delta}{2}
\end{align*}
for all $i=1,\ldots,n$. Therefore, we get
\begin{align*}
\norm{\mathcal{G}(x,\delta)-\nabla f(x)}&=\sqrt{\sum_{i=1}^n \brac{\dfrac{1}{2\delta}\brac{f(x+\delta  e_i)-f\brac{x-\delta e_i}}-\dfrac{\partial f}{\partial x_i}(x)}^2}\le \dfrac{L\sqrt{n}\delta}{2},
\end{align*}
which brings us to \eqref{general ball error} and thus justifies (i).

Assertion (ii) follows directly from (i). To verify (iii), take some  $\Delta>0$ and a bounded set $\Omega\subset\R^n$, and then find $r>0$ such that $\Omega\subset r\overline{\mathbb{B}}$. Defining $\Theta:=(r+\Delta)\overline{\mathbb{B}}$, it is clear that $\Theta$ is compact. Since $\nabla f$ is locally Lipschitzian, it is Lipschitz continuous on $\Theta$ with some constant $L>0$. Taking any $(x,\delta)\in \Omega\times (0,\Delta],$ we get that $\overline{\mathbb{B}}(x,\delta)\subset \Theta$, and thus $\nabla f$ is Lipschitz continuous on $\overline{\mathbb{B}}(x,\delta)$ with the constant $L$. Employing finally (i), assertion (iii) is justified.
\end{proof}
The following example shows that when the local Lipschitz continuity of $\nabla f$ is replaced by merely the continuity of $\nabla f$, the finite differences may not be local approximation of $\nabla f$.
\begin{Example}\rm
Define the univariate real-valued function $f$ by
\begin{align*}
f(x):=\begin{cases}
\frac{2}{3}\sqrt{x^3}&\text{ if }x\ge 0,\\
-\frac{2}{3}\sqrt{-x^3}&\text{ if }x<0.\\
\end{cases}
\end{align*}
The derivative of $f$ is calculated by
\begin{align*}
\nabla f(x)=\begin{cases}
\sqrt{x}&\text{ if }x\ge 0,\\
\sqrt{-x}&\text{ if }x< 0\\
\end{cases}
\end{align*}
being clearly continuous on $\R$ while not Lipschitz continuous around $0$. If we suppose that $\mathcal{G}(x,\delta)$ is the forward finite difference approximation of $\nabla f(x)$ from \eqref{forward}, we get that
\begin{align*}
\mathcal{G}(0,\delta)=\dfrac{f(\delta)-f(0)}{\delta}=\dfrac{ \frac{2}{3}\sqrt{\delta^3}}{\delta}=\dfrac{2\sqrt{\delta}}{3}\; \text{ for all }\;\delta>0,
\end{align*}
which implies that $\mathcal{G}(0,\delta)/\delta\rightarrow\infty$ as $\delta\downarrow 0$. It follows from \eqref{limsup bound} that $\mathcal{G}(x,\delta)$ is not a local approximation of the derivative $\nabla f$. Supposing now that $\mathcal{G}(x,\delta)$ is the central finite difference approximation of $\nabla f(x)$, we deduce from \eqref{central} the expression
 \begin{align*}
\mathcal{G}(0,\delta)=\dfrac{f(\delta)-f(-\delta)}{2\delta}=\dfrac{4\sqrt{\delta}}{3}\;\text{ for all }\;\delta>0,
 \end{align*}
 which also tells us that $\mathcal{G}(x,\delta)$ is not a local approximation of $\nabla f$.
 \end{Example}\vspace*{-0.1in}

 \section{General Derivative-Free Methods for \texorpdfstring{$\mathcal{C}^{1,1}_L$}{Lg} Functions}\label{sec:4}
 
This section addresses the optimization problem \eqref{optim prob} when $f\in \mathcal{C}^{1,1}_L$ for some $L>0$. In the context of derivative-free optimization, we assume that the only information about the function value $f(x)$ is available. This means that $\nabla f(x)$ is not exactly computable at every $x\in\R^n$, and the Lipschitz constant $L$ of $\nabla f$ is also not available. By employing gradient approximation methods that satisfy the global error bound \eqref{global approx}, we propose here the general {\em Derivative-Free method with Constant stepsize} (DFC) to solve this problem with providing its convergence analysis. The DFC algorithm is described as follows.

\begin{longfbox}
\begin{Algorithm}[{DFC}]\hlabel{GDF}
\setcounter{Step}{-1}
\begin{Step}\rm Choose a global approximation $\mathcal{G}$ of $\nabla f$ under condition \eqref{global approx}. Select an initial point $x^1\in\R^n,$ an initial sampling radius $\delta_1>0,$ a  constant $C_1>0,$ a reduction factor $\theta\in (0,1)$, and scaling factors $\mu>2,r>1,\kappa>0$. Set $k:=1.$
\end{Step}
\begin{Step}[approximate gradient]\rm 
Find $g^k$ and the smallest nonnegative integer $i_k$ such that
\begin{align*}
g^k=\mathcal{G}(x^k,\theta^{i_k}\delta_k) \text{ and }\norm{g^k}>\mu C_k \theta^{i_k}\delta_k.
\end{align*}
Then set $\delta_{k+1}:=\theta^{i_k}\delta_k.$
\end{Step}
\begin{Step}[update]\rm
If $f\Big(x^{k}-\dfrac{\kappa}{C_k}g^k\Big)\le f(x^k)-\dfrac{\kappa(\mu-2)}{2C_k\mu}\norm{g^k}^2$, then $x^{k+1}:=x^k-\dfrac{\kappa}{C_k}g^k$ and $C_{k+1}:=C_k.$ Otherwise, $x^{k+1}:=x^k$ and $C_{k+1}:=rC_{k}.$
\end{Step}
\end{Algorithm}
\end{longfbox}
\begin{Remark}\rm
Let us now present some observations on Algorithm~\ref{GDF}. The first observation clarifies the existence of $g^k$ and $i_k$ in Step~1. Observation (ii) presents the ideas behind the construction of the algorithm. Observation (iii) explains the iteration updates in Step~2 while observation (iv) interprets the term ``constant stepsize" in the name of our method.

{\bf(i)} The procedure of finding $g^k$ and $i_k$ that satisfy Step~1 can be given as follows. Set $i_k=0$ and set
\begin{align}\label{recal}
g^k=\mathcal{G}(x^k,\delta_k).
\end{align}
While $\norm{g^k}\le \mu C_k\theta^{i_k}\delta_k$, increase $i_k$ by $1$ and recalculate $g^k$ under \eqref{recal}. When $\nabla f(x^k)\ne 0$, the existence of $g^k$ and $i_k$ in Step~1 is guaranteed. Indeed, otherwise we get a sequence $\set{g^k_i}$ with
\begin{align}\label{two ineq gki}
g^{k}_i=\mathcal{G}(x^k,\theta^i\delta_k)\text{ and }\;\norm{g^k_i}\le \mu \theta^i\delta_k\;\text{ for all }\;i\in\N.
\end{align}
Since $\theta\in (0,1),$ the latter means that $g^k_i\rightarrow0$ as $i\rightarrow\infty.$
Since $\mathcal{G}$ is a global approximation of $\nabla f$, for $C>0$ given in \eqref{global approx}, we get 
\begin{align*}
\norm{g^k_i-\nabla f(x^k)}\le C\theta^i\delta_k\text{ for all }i\in\N. 
\end{align*}
Letting $i\rightarrow\infty$ with taking into account that $g^k_i\rightarrow0$, the latter inequality implies  that $\nabla f(x^k)=0$, which is a contradiction. 

{\bf(ii)} Since $\mathcal{G}$ is a global approximation of $\nabla f$, let $C$ be a constant given in \eqref{global approx}.  Since $C$ is unknown, Algorithm \ref{GDF} generates the sequence $\set{C_k}$ that approximates $C$. Assume that in some iteration $k^{\rm th},$ the constant $C_k$ is slightly larger than $C$. Then Step~1 of Algorithm~\ref{GDF} provides
\begin{align}\label{condition remark}
\norm{g^k-\nabla f(x^k)}\le C\delta_{k+1}\le C_k\delta_{k+1}< \dfrac{1}{\mu}\norm{g^k}.
\end{align}
Since $\mu>2$, the inequality above gives us $ \norm{g^k-\nabla f(x^k)}\le\dfrac{1}{\mu}\norm{g^k},$ which yields the angle condition $\dotproduct{g^k,\nabla f(x^k)}>0$ as illustrated in the Figure \ref{fig:1} below.
\begin{figure}[H]
\centering
\includegraphics[scale=0.25]{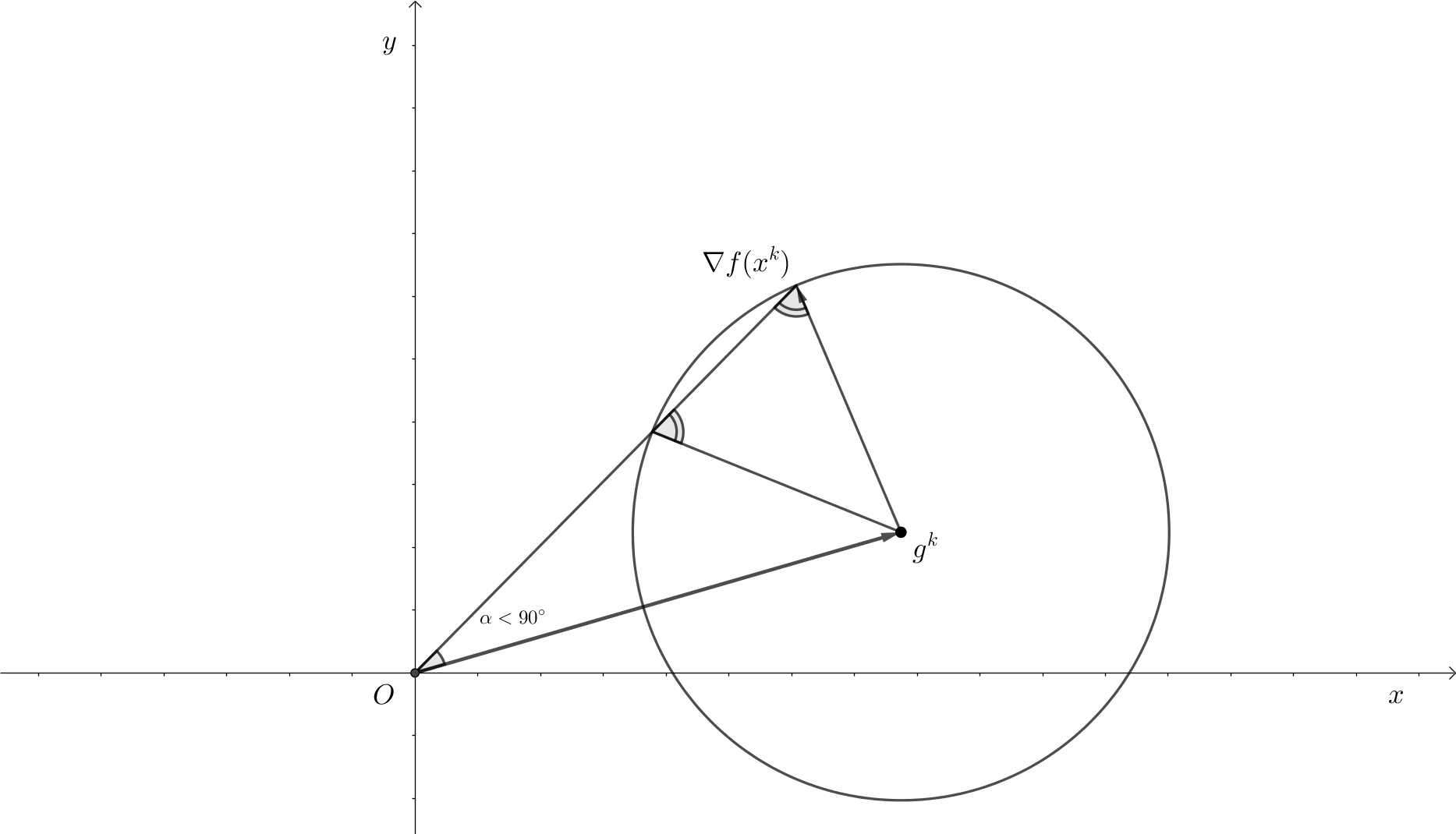}
\caption{An illustration for the angle condition}
\label{fig:1}
\end{figure}
\noindent
Moreover, the result of \cite[Theorem~3.2]{kmt23.1} about the convergence of general inexact gradient methods also tells us that the sequence $\set{x^k}$ satisfying the procedure $x^{k+1}=x^k-\frac{1}{L}g^k$ and condition \eqref{condition remark} achieves fundamental convergence properties. Although the Lipschitz constant $L$ of $\nabla f$ \textit{is not available} in our context, it is linearly related to $C$ for some choice of $\mathcal{G}$; e.g., when $\mathcal{G}$ is obtained from finite differences, we get by Proposition~\ref{for err bound} that $C=\frac{L\sqrt{n}}{2}$. Thus the procedure $x^{k+1}=x^k-\frac{1}{L}g^k$ can be replaced by
\begin{align*}
x^{k+1}&=x^{k}-C^{-1}\kappa g^k\approx x^{k}-C_k^{-1}\kappa g^k \;\text{ for some constant }\;\kappa>0.
\end{align*}
In the formulation of Algorithm~\ref{GDF}, the constant $\kappa$ can be chosen as an arbitrary positive number, which creates more flexibility for our method. 

{\bf(iii)} The condition  $f(x^{k}-C_k^{-1}\kappa g^k)\le f(x^k)-\dfrac{\kappa(\mu-2)}{2C_k\mu}\norm{g^k}^2$ determines whether $C_k$ is a good approximation for $C$ in the sense that the objective function $f$ is sufficiently decreasing when the iterate moves from $x^k$ to $x^{k+1}:=x^k-C_k^{-1}\kappa g^k$. If the condition does not hold, we increase $C_k$ by setting $C_{k+1}:= rC_k$ to get a better approximation for $C$ and stagnate the iterative sequence by setting $x^{k+1}:=x^k$.

{\bf(iv)} It will be shown in Proposition~\ref{claim 1 sec 5} that there exists a positive number $\bar C>0$ such that $C_k=\bar C$ for sufficiently large $k\in\N$, which also implies that $x^{k+1}=x^k-\kappa\bar C^{-1}g^k$ for such $k$. This explains the term ``constant stepsize'' in the name of our algorithm. 
\end{Remark}
The next proposition verifies that the tail of the sequence $\set{C_k}$ generated by Algorithm~\ref{GDF} is constant, which is crucial in deriving the fundamental convergence properties of the algorithm. 
\begin{Proposition}\label{claim 1 sec 5}
Let $\set{C_k}$ be the sequence generated by Algorithm~{\rm\ref{GDF}}. Assume that $\nabla f(x^k)\ne 0$ for all $k\in\N$. Then there exists a number $N\in\N$ such that $C_{k+1}=C_k$ whenever $k\ge N$.
\end{Proposition}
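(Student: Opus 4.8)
The plan is to show that the nondecreasing sequence $\set{C_k}$ can be increased only finitely many times. By Step~2, every update either leaves $C_k$ unchanged or multiplies it by the factor $r>1$, so $\set{C_k}$ is nondecreasing; consequently it suffices to prove that the ``increase'' branch is triggered only finitely often, which is equivalent to the assertion. Throughout, the assumption $\nabla f(x^k)\ne0$ for all $k$ guarantees, as explained in the first observation following Algorithm~\ref{GDF}, that $g^k$ and $i_k$ in Step~1 are well defined, so each iteration is meaningful.

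First I would fix the global approximation constant $C$ from \eqref{global approx} and the global Lipschitz constant $L$ of $\nabla f$, and set $M:=\max\set{C,\kappa L}$. The core claim is that \emph{whenever $C_k\ge M$, the acceptance test in Step~2 holds, and hence $C_{k+1}=C_k$.} To establish it, I would combine the global error bound with the selection rule in Step~1: since $\norm{g^k}>\mu C_k\delta_{k+1}$ and $\norm{g^k-\nabla f(x^k)}\le C\delta_{k+1}\le C_k\delta_{k+1}$ (using $C_k\ge C$), we obtain $\norm{g^k-\nabla f(x^k)}<\frac{1}{\mu}\norm{g^k}$, which yields the lower bound
\begin{align*}
\dotproduct{\nabla f(x^k),g^k}\ge\norm{g^k}^2-\norm{g^k-\nabla f(x^k)}\norm{g^k}\ge\frac{\mu-1}{\mu}\norm{g^k}^2.
\end{align*}

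Next I would apply the descent Lemma~\ref{lemma descent} to the candidate point $x^k-\frac{\kappa}{C_k}g^k$, giving
\begin{align*}
f\Big(x^k-\tfrac{\kappa}{C_k}g^k\Big)-f(x^k)\le -\frac{\kappa}{C_k}\dotproduct{\nabla f(x^k),g^k}+\frac{L\kappa^2}{2C_k^2}\norm{g^k}^2,
\end{align*}
and then substitute the inner-product estimate above. Dividing the resulting inequality by $\frac{\kappa}{C_k}\norm{g^k}^2>0$, one checks that the right-hand side is bounded above by $-\frac{\kappa(\mu-2)}{2C_k\mu}\norm{g^k}^2$ precisely when $\frac{\kappa L}{2C_k}\le\frac12$, i.e.\ when $C_k\ge\kappa L$, which holds because $C_k\ge M$. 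This verifies the acceptance test in Step~2 and proves the core claim.

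Finally, I would argue by contradiction: suppose the increase branch is triggered infinitely often, say at iterations $k_1<k_2<\cdots$. Each such step multiplies $C_{k_j}$ by $r>1$, so $C_{k_j}\ge C_1 r^{\,j-1}\to\infty$, and hence $C_{k_j}\ge M$ for all large $j$. But at an increase iteration the acceptance test fails, contradicting the core claim, which forces $C_{k+1}=C_k$ as soon as $C_k\ge M$. Therefore increases cease after some $N\in\N$, and $C_{k+1}=C_k$ for all $k\ge N$. The main obstacle is the verification of the acceptance test in the core claim—bounding $\dotproduct{\nabla f(x^k),g^k}$ from below via the Step~1 selection rule and combining it with the descent lemma so that the algebra collapses exactly to the threshold $C_k\ge\kappa L$; the remaining reasoning is routine bookkeeping about the geometrically increasing, nondecreasing sequence $\set{C_k}$.
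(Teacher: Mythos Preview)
Your proof is correct and follows essentially the same approach as the paper's: both fix the threshold $\max\{C,\kappa L\}$, combine the Step~1 selection rule with \eqref{global approx} to bound $\dotproduct{\nabla f(x^k),g^k}$ from below, feed this into the descent Lemma~\ref{lemma descent} to verify the acceptance test once $C_k$ exceeds the threshold, and then derive a contradiction from infinitely many increase steps. The only cosmetic difference is that you isolate the ``core claim'' as a forward implication before the contradiction, whereas the paper embeds it directly inside the contradiction argument.
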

\begin{proof}
Since $\mathcal{G}$ is a global approximation of $\nabla f$ under condition \eqref{global approx}, there exists $C>0$ such that 
\begin{align}\label{global bound proof 1}
\norm{\mathcal{G}(x,\delta)-\nabla f(x)}\le C\delta\;\text{ for all }\;(x,\delta)\in\R^n\times (0,\infty).
\end{align}
Since $\nabla f$ is globally Lipschitz continuous, we also find $L>0$ such that $\nabla f$ is Lipschitz continuous with the constant $L$ on $\R^n$. Arguing by contradiction, suppose that the number $N$ asserted in the proposition does not exist. By Step~2 of Algorithm~\ref{GDF}, this implies that $C_{k+1}=rC_k$ for infinitely many $k\in\N$, and hence $C_k\rightarrow\infty$. Therefore, there is a number $K\in\N$ such that $C_{K+1}=r C_K$ and $C_K>\max\set{C,L\kappa}$. Using Step~2 of Algorithm~\ref{GDF} together with the update $C_{K+1}=r C_K$, we deduce that
\begin{align}\label{less}
f\Big(x^{K}-\dfrac{\kappa}{C_K}g^{K}\Big)> f(x^{K})-\dfrac{\kappa(\mu-2)}{2C_K\mu}\norm{g^{K}}^2.
\end{align}
Combining $g^K=\mathcal{G}(x^K,\delta_{K+1})$ and $\norm{g^K}\ge \mu C_K\delta_{K+1}$ from Step~1 of Algorithm~\ref{GDF} with \eqref{global bound proof 1} and $C_K>C$ as above, we get the relationships 
\begin{align*}
\norm{g^{K}-\nabla f(x^{K})}&=\norm{\mathcal{G}(x^{K},\delta_{K+1})-\nabla f(x^{K})}\\
&\le C\delta_{K+1}\le C_K\delta_{K+1}\le\mu^{-1}\norm{g^{K}}.
\end{align*}
By the Cauchy-Schwarz inequality, the latter gives us
\begin{align*}
\dotproduct{\nabla f(x^{K}),g^{K}}&=\dotproduct{\nabla f(x^{K})-g^{K},g^{K}}+\norm{g^{K}}^2\\
&\ge -\norm{\nabla f(x^{K})-g^{K}}\norm{g^{K}}+\norm{g^{K}}^2\\
&\ge(1-\mu^{-1})\norm{g^{K}}^2.
\end{align*}
Combining this with Lemma \ref{lemma descent} and taking into account the global Lipschitz continuity of $\nabla f$ with the constant $L$ as well as the condition $C_K>L\kappa$ as above, we get that 
\begin{align*}
f\Big(x^{K}-\dfrac{\kappa}{C_K}g^{K}\Big)-f(x^{K})&\le -\dfrac{\kappa}{C_K}\dotproduct{\nabla f(x^{K}),g^{K}}+\dfrac{L}{2}\norm{\dfrac{\kappa}{C_K}g^{K}}^2\\
&\le -\dfrac{\kappa}{C_K}\Big(1-\dfrac{1}{\mu}\Big)\norm{g^{K}}^2+\dfrac{\kappa}{2C_K}\norm{g^{K}}^2\\
&=-\dfrac{\kappa}{C_K}\norm{g^{K}}^2\Big(\dfrac{1}{2}-\dfrac{1}{\mu}\Big)=-\dfrac{\kappa(\mu-2)}{2C_K\mu}\norm{g^{K}}^2,
\end{align*}
which clearly contradicts\eqref{less} and completes the proof of the proposition.
\end{proof}

Now we are ready to establish the convergence properties of Algorithm~\ref{GDF}.
\begin{Theorem}\label{convergence GDF}
Let $\set{x^k}$ be the sequence generated by Algorithm~{\rm\ref{GDF}} and assume that $\nabla f(x^k)\ne0$ for all $k\in\N.$ Then either $f(x^k)\rightarrow-\infty$, or we have the assertions:

{\bf(i)} The sequence $\set{\nabla f(x^k)}$ converges to $0$.

{\bf(ii)} If $f$ satisfies the KL property at some accumulation point $\bar x$ of $\set{x^k}$, then $x^k\rightarrow\bar x$.

{\bf(iii)} If $f$ satisfies the KL property at some accumulation point $\bar x$ of $\set{x^k}$ with $\psi(t)=Mt^{q}$ for $M>0$ and $q\in (0,1)$, then the following convergence rates are guaranteed for $\set{x^k}$:

$\bullet$ If $q\in(0,1/2]$, then $\set{x^k}$, $\set{\nabla f(x^k)}$, and $\set{f(x^k)}$ converge linearly to $\bar x$, $0$, and $f(\bar x)$, respectively.

$\bullet$ The setting of $q\in(1/2,1)$ ensures the estimates
$$
\norm{x^k-\bar x}=\mathcal{O}\Big(k^{-\frac{1-q}{2q-1}}\Big),\; \norm{\nabla f(x^k)}=\mathcal{O}\Big(k^{-\frac{1-q}{2q-1}}\Big),\;\text{ and }\;f(x^k)-f(\bar x)=\mathcal{O}\Big(k^{-\frac{2-2q}{2q-1}}\Big).
$$
\end{Theorem}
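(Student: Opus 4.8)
The plan is to exploit the stabilization of $\set{C_k}$ from Proposition~\ref{claim 1 sec 5} to recognize that, after finitely many iterations, DFC is nothing but a constant-stepsize inexact gradient descent to which the abstract machinery of Section~\ref{Sec:2} directly applies. By Proposition~\ref{claim 1 sec 5} there are $N\in\N$ and $\bar C>0$ with $C_k=\bar C$ for all $k\ge N$. For such $k$ the equality $C_{k+1}=C_k$ rules out the rejecting branch (which would give $C_{k+1}=rC_k>C_k$), so Step~2 takes its accepting branch:
\begin{align*}
x^{k+1}=x^k-\dfrac{\kappa}{\bar C}g^k\quad\text{and}\quad f(x^{k+1})\le f(x^k)-\dfrac{\kappa(\mu-2)}{2\bar C\mu}\norm{g^k}^2.
\end{align*}
Let $C$ be the constant from \eqref{global approx}. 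Combining $g^k=\mathcal{G}(x^k,\delta_{k+1})$ with $\norm{g^k}>\mu\bar C\delta_{k+1}$ from Step~1 gives $\norm{g^k-\nabla f(x^k)}\le C\delta_{k+1}<\frac{C}{\mu\bar C}\norm{g^k}$, whence the key linear comparison $\norm{\nabla f(x^k)}\le\alpha\norm{g^k}$ with $\alpha:=1+\frac{C}{\mu\bar C}$. Since $\nabla f(x^k)\ne0$, this forces $g^k\ne0$, so the decrease above is strict and $x^{k+1}\ne x^k$ for every $k\ge N$.

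For the dichotomy and assertion (i), I would observe that $\set{f(x^k)}_{k\ge N}$ is nonincreasing: if it is unbounded below then $f(x^k)\to-\infty$, and otherwise it converges. In the convergent case, telescoping the sufficient decrease yields $\sum_{k\ge N}\norm{g^k}^2<\infty$, so $\norm{g^k}\to0$, and the comparison $\norm{\nabla f(x^k)}\le\alpha\norm{g^k}$ forces $\nabla f(x^k)\to0$, proving (i). For (ii), the task is to verify (H1) and (H2) of Proposition~\ref{general convergence under KL}. Using $\norm{x^{k+1}-x^k}=\frac{\kappa}{\bar C}\norm{g^k}$, the sufficient decrease, and $\norm{\nabla f(x^k)}\le\alpha\norm{g^k}$ establishes (H1) with $\sigma=\frac{\mu-2}{2\mu\alpha}$ exactly as in Remark~\ref{rmk two KL}, while (H2) is immediate since the decrease is strict whenever $g^k\ne0$; Proposition~\ref{general convergence under KL} then delivers $x^k\to\bar x$.

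For (iii), I would set $\tau_k:=\kappa/\bar C$ and $d^k:=-g^k$, so that $x^{k+1}=x^k+\tau_kd^k$, the sufficient decrease reads $f(x^k)-f(x^{k+1})\ge\beta\tau_k\norm{d^k}^2$ with $\beta=\frac{\mu-2}{2\mu}$, and $\norm{\nabla f(x^k)}\le\alpha\norm{d^k}$. As $\tau_k$ equals the constant $\kappa/\bar C>0$ and $x^{k+1}\ne x^k$, Proposition~\ref{general rate} applies and yields the stated rate for $\set{x^k}$ in both regimes of $q$. It remains to transfer this rate to $\set{\nabla f(x^k)}$ and $\set{f(x^k)}$: from $x^k\to\bar x$ and $\nabla f(x^k)\to0$ one gets $\nabla f(\bar x)=0$ by continuity, so global Lipschitz continuity gives $\norm{\nabla f(x^k)}=\norm{\nabla f(x^k)-\nabla f(\bar x)}\le L\norm{x^k-\bar x}$, and Lemma~\ref{lemma descent} on the segment $[\bar x,x^k]$ gives $0\le f(x^k)-f(\bar x)\le\frac{L}{2}\norm{x^k-\bar x}^2$. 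Feeding the iterate rate into these two inequalities produces the gradient rate (same exponent as $\set{x^k}$) and the function-value rate, the squaring accounting for the exponent $\frac{2-2q}{2q-1}=2\cdot\frac{1-q}{2q-1}$.

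The main obstacle I anticipate is not any single estimate but ensuring that all hypotheses of the imported Propositions hold cleanly: pinning down that the accepting branch is taken for all large $k$, and extracting the uniform comparison $\norm{\nabla f(x^k)}\le\alpha\norm{g^k}$ from the joint use of \eqref{global approx} and the Step~1 inequality $\norm{g^k}>\mu\bar C\delta_{k+1}$. Once the constant-stepsize regime is isolated via Proposition~\ref{claim 1 sec 5}, the only place requiring a genuinely new computation is the rate transfer to gradients and function values through the descent lemma and the stationarity $\nabla f(\bar x)=0$.
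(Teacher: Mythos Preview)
Your proposal is correct and follows essentially the same route as the paper: stabilize $\set{C_k}$ via Proposition~\ref{claim 1 sec 5}, read off the constant-stepsize sufficient decrease and the comparison $\norm{\nabla f(x^k)}\le\alpha\norm{g^k}$, then invoke Propositions~\ref{general convergence under KL} and~\ref{general rate} and transfer rates through $\nabla f(\bar x)=0$, the Lipschitz bound, and Lemma~\ref{lemma descent}. The only cosmetic difference is that for (i) the paper first passes through $\delta_{k+1}\downarrow0$ and the bound $\norm{g^k-\nabla f(x^k)}\le C\delta_{k+1}$, whereas you go directly from $g^k\to0$ to $\nabla f(x^k)\to0$ via the already-established inequality $\norm{\nabla f(x^k)}\le\alpha\norm{g^k}$; your shortcut is perfectly valid.
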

\begin{proof}
Since $\mathcal{G}$ is a global approximation of $\nabla f$ under condition \eqref{global approx}, there is $C>0$ such that 
\begin{align}\label{global bound proof}
\norm{\mathcal{G}(x,\delta)-\nabla f(x)}\le C\delta\;\text{ for all }\;(x,\delta)\in \R^n\times (0,\infty).
\end{align}
By $f\in{\cal C}^{1,1}_L$, we find $L>0$ such that the gradient mapping $\nabla f$ is Lipschitz continuous with the constant $L$ on $\R^n$. Taking the number $N\in\N$ from Proposition~\ref{claim 1 sec 5} ensures that $C_k=C_N$ for all $k\ge N$. This implies by Step~2 of Algorithm~\ref{GDF} that
\begin{align}\label{ine leq DFO}
f(x^{k+1})= f\Big(x^k-\dfrac{\kappa}{C_N}g^k\Big)\le f(x^k)-\dfrac{\kappa(\mu-2)}{2C_N\mu}\norm{g^k}^2\;\text{ for all }\;k\ge N,
\end{align}
which tells us that $\set{f(x^k)}_{k\ge N}$ is decreasing. If $f(x^k)\rightarrow-\infty$, there is nothing to prove, so we assume that $f(x^k)\not\rightarrow-\infty$, which implies that $\set{f(x^k)}$ is convergent. As a consequence, we get $f(x^k)-f(x^{k+1})\rightarrow 0$ as $k\to\infty$. Then \eqref{ine leq DFO} tells us that $g^k\rightarrow 0$. From Step~1 of Algorithm~\ref{GDF} it follows that
\begin{align}\label{5.3}
\norm{g^k}>\mu C_k\delta_{k+1}=\mu C_N\delta_{k+1}\;\text{ for all }\;k\ge N,
\end{align}
which implies that $\delta_{k+1}\downarrow 0$ as $k\rightarrow\infty$. It further follows from $g^k=\mathcal{G}(x^k,\delta_{k+1})$ and \eqref{global bound proof} that
\begin{align}\label{5.4}
\norm{g^k-\nabla f(x^k)}=\norm{\mathcal{G} (x^k,\delta_{k+1})-\nabla f(x^k)}\le C \delta_{k+1} \text{ for all }k\in\N,
\end{align}
which yields $\nabla f(x^k)\rightarrow 0$ as $k\rightarrow\infty$ and thus justifies (i).

To verify (ii), take any accumulation point $\bar x$ of $\set{x^k}$ and assume that $f$ satisfies the KL property at $\bar x$. By \eqref{5.3} and \eqref{5.4}, we obtain that
\begin{equation}\label{2 ineq}
\begin{array}{ll}
\norm{\nabla f(x^k)}&\le\norm{g^k}+\norm{\nabla f(x^k)-g^k}\le \norm{g^k}+C\delta_{k+1} \nonumber\\
&\le \norm{g^k}+\dfrac{C\norm{g^k}}{\mu C_N} =\alpha\norm{g^k}\text{ for all }k\ge N,
\end{array}
\end{equation}
where $\alpha:=\frac{\mu C_N+C}{\mu C_N}$. This together with \eqref{ine leq DFO} yields condition \eqref{two conditions} in Proposition~\ref{general rate}. By Remark~\ref{rmk two KL}(i), assumptions (H1) and (H2) in Proposition~\ref{general convergence under KL} hold. Therefore, $x^k\rightarrow\bar x$ as $k\to\infty$, which justifies (ii).

To proceed with the proof of assertion (iii) under the KL property at $\ox$ with $\psi(t)=Mt^{q}$, we use the iterations $x^{k+1}=x^k-C^{-1}\kappa g^k$ as in Step~2 of Algorithm~\ref{GDF} together with $\|g^k\|>0$ from Step~1 of Algorithm~\ref{GDF}. This gives us $x^{k+1}\ne x^k$ for $k\ge N$. Combining the latter with \eqref{ine leq DFO} and \eqref{2 ineq}, we see that all the assumptions in Proposition~\ref{general rate} are satisfied. This verifies the convergence rates of $\set{x^k}$ to $\bar x$ stated in (iii). Since $\bar x$ is an accumulation point of $\set{x^k}$, it follows from (i) that $\bar x$ is a stationary point of $f$, i.e., $\nabla f(\bar x)=0$. Hence the usage of Lemma~\ref{lemma descent} and the decreasing property of $\set{f(x^k)}_{k\ge N}$ yields
\begin{align*}
0\le f(x^k)-f(\bar x)\le \dotproduct{\nabla f(\bar x),x^k-\bar x}+\dfrac{{L}}{2}\norm{x^k-\bar x}^2=\dfrac{{L}}{2}\norm{x^k-\bar x}^2,
\end{align*}
which justifies the convergence rates of $\set{f(x^k)}$ to $f(\bar x)$ as asserted in (iii).

It remains to verify the convergence rates for $\set{\nabla f(x^k)}$. Since $\nabla f$ is Lipschitz continuous with the constant $L>0$, the claimed property follows from the convergence rates for $\set{x^k}$ due to
\begin{align*}
\norm{\nabla f(x^k)}=\norm{\nabla f(x^k)-\nabla f(\bar x)}\le L\norm{x^k-\bar x}.
\end{align*}
This therefore completes the proof of the theorem.
\end{proof}\vspace*{-0.2in}
\section{General Derivative-free Methods for \texorpdfstring{$\mathcal{C}^{1,1}$}{Lg} Functions}\label{sec:5}
In this section, we propose and justify the novel {\em Derivative-Free method with Backtracking stepsize} (DFB) to solve the optimization problem \eqref{optim prob}. The main result here establishes the {\em global convergence} with convergence rates of the following algorithm, which employs gradient approximations satisfying the local error bound estimate \eqref{local approx}.

\begin{longfbox}
 \begin{Algorithm}[DFB]\hlabel{GDF C11}
\setcounter{Step}{-1}
\begin{Step}[initialization]\rm Choose a local approximation $\mathcal{G}$ of $\nabla f$ under condition \eqref{local approx}. Select an initial point $x^1\in\R^n$ and initial radius $\delta_1>0$, a constant $C_1>0$, factors $\theta\in(0,1)$, $\mu>2,\eta>1$, linesearch constants $\beta\in(0,1/2),\gamma\in(0,1)$, $\bar\tau>0$, and an initial bound $ t^{\min}_1\in(0,\bar \tau)$. Choose a sequence of manually controlled errors $\set{\nu_k}\subset[0,\infty)$ such that $\nu_k\downarrow 0$ as $k\to\infty$. Set $k:=1.$
\end{Step}
\begin{Step}[approximate gradient]\rm 
Select $g^k$ and the smallest nonnegative integer $i_k$ so that
\begin{align}\label{choice gk}
g^k=\mathcal{G}(x^k,\min\set{\theta^{i_k}\delta_k,\nu_k}) \text{ and }\norm{g^k}>\mu C_k\theta^{i_k}\delta_k.
\end{align}
Then set $\delta_{k+1}:=\theta^{i_k}\theta_k$.
\end{Step}
\begin{Step}[linesearch]\rm
Set $t_k:=\bar \tau$. While 
\begin{align}\label{GDF linesearch}
f(x^k-t_kg^k)> f(x^k)-\beta t_k\norm{g^k}^2\text{ and }t_k\ge t_k^{\min},
\end{align}
set $t_k:=\gamma t_k.$
\end{Step}
\begin{Step}[stepsize and parameters update]\rm
If $t_k\ge  t^{\min}_{k}$, then set $\tau_k:=t_k$, $C_{k+1}:=C_k$, and $t^{\min}_{k+1}:= t^{\min}_{k}$. Otherwise, set $\tau_k:=0,\;C_{k+1}:=\eta C_k$ and $t^{\min}_{k+1}:=\gamma t^{\min}_{k}$. 
\end{Step}
\begin{Step}[iteration update]\rm Set $x^{k+1}:=x^k-\tau_kg^k.$ Increase $k$ by $1$ and go back to Step 1.
\end{Step}
\end{Algorithm}
\end{longfbox}
\begin{Remark}\rm\label{Remark C11}

{\bf(i)} Fix any $k\in\N$. The procedure of finding $g^k$ and $i_k$ that satisfies Step~1 of Algorithm~\ref{GDF C11} can be given as follows. Set $i_k:=0$ and calculate $g^k$ as 
\begin{align}\label{GDF recal}
g^k=\mathcal{G}\big(x^k,\min\{\theta^{i_k}\delta_k,\nu_k\}\big).
\end{align}
While $\norm{g^k}\le \mu C_k\theta^{i_k}\delta_k $, increase $i_k$ by $1$ and recalculate $g^k$ by formula \eqref{GDF recal}. We now show that when $\nabla f(x^k)\ne 0$, this procedure stops after a finite number of steps giving us $g^k$ and $i_k$ as desired. Indeed, assuming the contrary that the procedure does not stop, we get a sequence of $\set{g^k_i}$ with
\begin{align}\label{gki select}
g^k_i=\mathcal{G}\big(x^k,\min\set{\theta^i\delta_k,\nu_k}\big)\;\text{ and }\;\norm{g^k_i}\le \mu C_k\theta^i\delta_k\;\text{ for all }\;i\in\N.
\end{align}
Since $\mathcal{G}$ is a local approximation of $\nabla f$, for any fixed $\Delta>0$ the condition \eqref{local approx} with $\Omega=\set{x^k}$ ensures the existence of a positive number $C$ such that 
\begin{align}\label{bound in remark}
\norm{\mathcal{G}(x^k,\delta)-\nabla f(x^k)}\le C\delta\;\text{ whenever }\;0<\delta\le\Delta.
\end{align}
By $\theta\in(0,1)$, there is $N\in\N$ with $\theta^i\delta_k\le \Delta$ for all $i\ge N$. Combining this with  \eqref{gki select} and \eqref{bound in remark} yields
\begin{align*}
\norm{g^k_i-\nabla f(x^k)}\le C\theta^i\delta_k \;\text{ and }\;  \norm{g^k_i}\le\mu C_k\theta^i\delta_k\;\text{ for all }\;i\ge N.
\end{align*}
Letting $i\rightarrow\infty$, we arrive at $\nabla f(x^k)=0$, which is a contradiction.

{\bf(ii)} It follows directly from the construction of $\delta_k$ in Step~1 of Algorithm~\ref{GDF C11} that
\begin{align}\label{short rela}
g^k=\mathcal{G}(x^k,\min\set{\delta_{k+1},\nu_k})  \text{ and }\norm{g^k}>\mu C_k\delta_{k+1}.
\end{align}
\end{Remark}

To proceed further with the convergence analysis of Algorithm~\ref{GDF C11}, we obtain two results of their independent interest. The first one reveals some uniformity of general linesearch procedures with respect to the selections of reference points, stepsizes, and directions.
\begin{Lemma}\label{lemm t uniform}
Let $f:\R^n\rightarrow\R$ be a function with a locally Lipschitz continuous gradient, and let $\beta\in(0,1/2)$. Then for any nonempty bounded set $\Omega\subset\R^n$, there exists $\bar t>0$ such that 
\begin{align*}
f(x-tg)\le f(x)-\beta t\norm{g}^2\;\text{ whenever }\;x\in\Omega,\;2\norm{g-\nabla f(x)}\le\norm{g},\;\text{ and }\;t\in(0,\bar t].
\end{align*}
\end{Lemma}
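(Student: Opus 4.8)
The plan is to exploit the inexactness (angle) condition $2\norm{g-\nabla f(x)}\le\norm{g}$ in two ways: first to bound $\norm{g}$ uniformly, and then to extract the usual descent angle estimate, after which a single application of the descent lemma finishes the argument.

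First I would observe that since $\Omega$ is bounded, its closure $\overline{\Omega}$ is compact, and $\nabla f$ (continuous as it is locally Lipschitz) is bounded on it: there is $M>0$ with $\norm{\nabla f(x)}\le M$ for all $x\in\overline\Omega$. The hypothesis $2\norm{g-\nabla f(x)}\le\norm{g}$ then yields two consequences. By the triangle inequality, $\norm{g}\le\norm{\nabla f(x)}+\tfrac12\norm{g}$, so that $\norm{g}\le 2\norm{\nabla f(x)}\le 2M$; thus all admissible directions are uniformly bounded. By the Cauchy--Schwarz inequality,
\begin{align*}
\dotproduct{\nabla f(x),g}=\dotproduct{\nabla f(x)-g,g}+\norm{g}^2\ge-\tfrac12\norm{g}^2+\norm{g}^2=\tfrac12\norm{g}^2,
\end{align*}
which is the required angle condition.

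Next, choosing $r>0$ with $\Omega\subset r\overline{\mathbb{B}}$ and provisionally restricting attention to $t\le 1$, every point $x-tg$ satisfies $\norm{x-tg}\le r+2M$, so the compact ball $\Theta:=(r+2M)\overline{\mathbb{B}}$ contains $x$, the point $x-tg$, and hence (by convexity of $\Theta$) the entire segment $[x,x-tg]$. Since $\nabla f$ is locally Lipschitz, it is Lipschitz on $\Theta$ with some constant $L>0$. Applying Lemma~\ref{lemma descent} along this segment and inserting the angle estimate gives
\begin{align*}
f(x-tg)\le f(x)-t\dotproduct{\nabla f(x),g}+\tfrac{L}{2}t^2\norm{g}^2\le f(x)-t\norm{g}^2\Big(\tfrac12-\tfrac{L}{2}t\Big).
\end{align*}
Taking $\bar t:=\min\big\{1,(1-2\beta)/L\big\}$, which is positive because $\beta\in(0,1/2)$, ensures $\tfrac12-\tfrac{L}{2}t\ge\beta$ for every $t\in(0,\bar t]$, and the claimed inequality follows uniformly over $x\in\Omega$ and all admissible $g$.

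The hard part, conceptually, is the a priori worry that $\norm{g}$ could be arbitrarily large, in which case $x-tg$ might escape every fixed compact set and no uniform Lipschitz constant would be available; this is exactly where uniformity over $\Omega$ could break down. The resolution, and the crux of the proof, is the remark that the inexactness condition forces $\norm{g}\le 2\norm{\nabla f(x)}$, so that boundedness of $\Omega$ together with continuity of $\nabla f$ already caps $\norm{g}$. Once this bound is secured, confining the segments to $\Theta$ and carrying out the descent-lemma computation, with the stepsize threshold dictated by $\beta<1/2$, is routine.
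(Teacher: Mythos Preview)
Your proof is correct and follows essentially the same approach as the paper: bound $\norm{g}\le 2\norm{\nabla f(x)}$ via the inexactness condition, use boundedness of $\Omega$ and continuity of $\nabla f$ to cap $\norm{g}$ uniformly, confine all segments $[x,x-tg]$ to a fixed compact ball on which $\nabla f$ is Lipschitz, and then apply the descent lemma together with the angle estimate $\dotproduct{\nabla f(x),g}\ge\tfrac12\norm{g}^2$ to obtain the stepsize threshold from $\beta<1/2$. The only cosmetic differences are that the paper bounds $\norm{\nabla f}$ over the ball $r\overline{\mathbb{B}}\supset\Omega$ rather than $\overline\Omega$, and takes $\bar t$ strictly less than $\min\{1,(1-2\beta)/L\}$ whereas you allow equality; neither affects the argument.
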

\begin{proof}
The boundedness of $\Omega$ gives us $r>0$ such that $\Omega\subset r\overline{\mathbb{B}}$. Using the continuity of $\nabla f$ and the compactness of $r\overline{\mathbb{B}}$, we define $r':=\max \set{\norm{\nabla f(x)}\;|\;x\in r\overline{\mathbb{B}}}$.
Since $f\in\mathcal{C}^{1,1}$, there is $L>0$ such that $\nabla f$ is Lipschitz continuous with the constant $L$ on $\Theta:=(r+2r')\overline{\mathbb{B}}$. By $\beta<1/2$, we find $\bar t>0$ with $\bar t<\min \set{1,L^{-1}(1-2\beta)}$. Now take some $x\in\Omega\subset\Theta$ and $g\in\R^n$ such that $2\norm{g-\nabla f(x)}\le\norm{g}$ and $t\in (0,\bar t]$. The choice of $g$ gives us by the Cauchy-Schwarz inequality that
\begin{equation}\label{dot inequa}
\begin{array}{ll}
\dotproduct{\nabla f(x),g}&= \dotproduct{\nabla f(x)-g,g}+\norm{g}^2\ge -\norm{\nabla f(x)-g}\norm{g}+\norm{g}^2\\
&\ge-\frac{1}{2}\norm{g}^2+\norm{g}^2=\frac{1}{2}\norm{g}^2,
\end{array}
\end{equation}
and by using the triangle inequality that
\begin{eqnarray*}
\begin{array}{ll}
\norm{\nabla f(x)}\ge\norm{g}-\norm{g-\nabla f(x)}\ge\frac{1}{2}\norm{g}.
\end{array}
\end{eqnarray*}
Combining the latter with the choice of $t,\;\bar t$, $x\in\Omega\subset r\overline{\mathbb{B}}$ and the construction of $r'$ yields
\begin{align*}
t\norm{g}\le \bar t\norm{g}\le2\bar t \norm{\nabla f(x)}\le 2\bar t r'<2r',
\end{align*}
which ensures that $x-tg\in\Theta$. The convexity of $\Theta$ tells us that the entire line segment $[x,x-tg]$ lies on $\Theta$. Remembering that $\nabla f$ is Lipschitz continuous with the constant $L$ on $\Theta$, we employ Lemma~\ref{lemma descent} by taking into account that $t\le\bar t<L^{-1}(1-2\beta)$ and that \eqref{dot inequa}. This gives us
\begin{align*}
f(x-tg)-f(x)&\le \dotproduct{x-tg-x,\nabla f(x)}+\dfrac{L}{2}\norm{x-tg-x}^2\\
&=-t\dotproduct{g,\nabla f(x)}+\dfrac{Lt^2}{2}\norm{g}^2\le -\frac{t}{2}\norm{g}^2+\dfrac{Lt^2}{2}\norm{g}^2\\
&=-\beta t\norm{g}^2+t\norm{g}^2\dfrac{2\beta -1+Lt}{2}\le -\beta t\norm{g}^2
\end{align*}
and thus completes the proof of the lemma.
\end{proof}

Employing the obtained lemma, we derive the next result showing that unless the stationary point is found, Algorithm~\ref{GDF C11} always makes a progress after a finite number of iterations.
\begin{Proposition}\label{prop finite} Let $\set{x^k}$ and $\set{\tau_k}$ be the sequences generated by Algorithm~{\rm\ref{GDF C11}}, and let $K\in\N$ be such that $\nabla f(x^K)\ne 0$. Then we can choose a number $N\ge K$ so that $\tau_N>0$.
\end{Proposition}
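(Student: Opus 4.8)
The plan is to argue by contradiction. Suppose no such $N$ exists, so that $\tau_k=0$ for every $k\ge K$. By Step~4 of Algorithm~\ref{GDF C11}, each $\tau_k=0$ forces $x^{k+1}=x^k$, whence $x^k=x^K=:\bar x$ for all $k\ge K$, with $\nabla f(\bar x)=\nabla f(x^K)\ne 0$. Moreover, whenever $\tau_k=0$, Step~3 updates $t^{\min}_{k+1}=\gamma\, t^{\min}_{k}$; since $\gamma\in(0,1)$, this yields $t^{\min}_{k}=\gamma^{k-K}t^{\min}_{K}\downarrow 0$ as $k\to\infty$. (The geometric blow-up $C_{k+1}=\eta C_k$ is irrelevant to the argument.)

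First I would show that $g^k\to\nabla f(\bar x)$. By \eqref{short rela} we have $g^k=\mathcal{G}(\bar x,\delta'_k)$ with $\delta'_k:=\min\{\delta_{k+1},\nu_k\}\le\nu_k\to 0$, so $\delta'_k\downarrow 0$. Fixing any $\Delta>0$ with $\nu_k\le\Delta$ for all $k$ and applying the local approximation property \eqref{local approx} on $\Omega=\{\bar x\}$ supplies a constant $C>0$ such that $\norm{g^k-\nabla f(\bar x)}\le C\delta'_k\to 0$. Since $\nabla f(\bar x)\ne 0$, it follows that $\norm{g^k}\to\norm{\nabla f(\bar x)}>0$, and therefore $2\norm{g^k-\nabla f(\bar x)}\le\norm{g^k}$ for all sufficiently large $k$.

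Next I would invoke the uniformity result Lemma~\ref{lemm t uniform} with the bounded set $\Omega=\{\bar x\}$: it produces a threshold $\bar t>0$, \emph{independent of} $k$, such that $f(\bar x-t g^k)\le f(\bar x)-\beta t\norm{g^k}^2$ for every $t\in(0,\bar t]$ and every $g^k$ satisfying the magnitude condition established above. Let $\hat t:=\bar\tau\gamma^{j^*}\in(0,\bar t]$ be the largest linesearch grid value not exceeding $\bar t$; this is a fixed positive number, and the descent inequality holds at $t=\hat t$. The crucial observation is that $\bar t$, hence $\hat t$, does not shrink with $k$, whereas $t^{\min}_{k}\to 0$; consequently $t^{\min}_{k}<\hat t$ for all large $k$.

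Finally I would trace the linesearch of Step~2 for such a large $k$. Starting from $\bar\tau$ and multiplying by $\gamma$, the candidate stepsizes run through the finite grid $\bar\tau,\gamma\bar\tau,\dots,\hat t$, each of which is at least $\hat t>t^{\min}_{k}$; hence the while loop cannot terminate through the safeguard $t_k\ge t^{\min}_k$ before $t_k$ reaches $\hat t$. Since the descent condition holds at $\hat t$ (and possibly already at some larger grid point), the loop exits at a value $t_k\ge\hat t>t^{\min}_{k}$, so Step~3 sets $\tau_k=t_k>0$, contradicting $\tau_k=0$. This contradiction delivers the desired $N\ge K$ with $\tau_N>0$. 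The delicate part of the argument is precisely this bookkeeping: matching the uniform, $k$-independent acceptance threshold $\bar t$ coming from Lemma~\ref{lemm t uniform} against the geometrically vanishing guard $t^{\min}_k$, so as to pin down a fixed admissible grid point that survives for all large $k$.
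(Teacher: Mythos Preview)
Your proposal is correct and follows essentially the same route as the paper's proof: both argue by contradiction, freeze $x^k\equiv x^K$, use the local approximation property on $\Omega=\{x^K\}$ to get $2\norm{g^k-\nabla f(x^K)}\le\norm{g^k}$ for large $k$, invoke Lemma~\ref{lemm t uniform} to produce a $k$-independent $\bar t$, and then contrast this fixed threshold with the geometrically vanishing $t^{\min}_k$ to force $t_k\ge t_k^{\min}$ and hence $\tau_k>0$. The only cosmetic differences are that the paper isolates a single index $N$ (via the explicit bound $\nu_N<\tfrac{1}{3C}\norm{\nabla f(x^K)}$ and $t_N^{\min}<\gamma\bar t$) rather than arguing asymptotically, and phrases the grid step with $\gamma\bar t$ instead of your $\hat t$.
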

\begin{proof}
Assume on the contrary that $\tau_k=0$ for all $k\ge K$. Steps~3 and 4 of Algorithm~\ref{GDF C11} give us
\begin{align}\label{Lk increasing}
t^{\min}_{k+1}=\gamma t^{\min}_k\;\text{ and }\;x^k=x^K\;\text{ for all }\;k\ge K.
\end{align}
Therefore, $\nabla f(x^k)=\nabla f(x^K)\ne 0$ for all $k\ge K,$ which implies that $g^k$ and $i_k$ in Step 1 of Algorithm~\ref{GDF C11}  exist for all $k\ge K$. Since $\mathcal{G}$ is a local approximation of $\nabla f$, for any fixed $\Delta>0$ condition \eqref{local approx} with $\Omega=\set{x^K}$ ensures the existence of  $C>0$ with
\begin{align}\label{bound for G(x,delta)}
\norm{\mathcal{G}(x^K,\delta)-\nabla f(x^K)}\le C \delta\;\text{ whenever }\;0<\delta\le\Delta.
\end{align}
It follows from Lemma~\ref{lemm t uniform} with $\Omega=\set{x^K}$ that there is some $\bar t>0$  such that
\begin{align}\label{choi bar t}
f(x^K-tg)\le f(x^K)-\beta t\norm{g}^2\;\text{ whenever }\;
2\norm{g-\nabla f(x^K)}\le \norm{g}\;\text{ and }\;t\in (0,\bar t].
\end{align}
Using $\nu_k\downarrow0,\; t_k^{\min}\downarrow0$, $\nabla f(x^K)\ne 0$, and \eqref{Lk increasing}  gives us $N\ge K$ for which $\nu_N<\min\set{\Delta,\frac{1}{3C}\norm{\nabla f(x^K)}}$ and $ t^{\min}_N<\gamma\bar t$. Then we get from \eqref{bound for G(x,delta)} with taking into account $x^N=x^K$ that
\begin{align*}
\norm{\mathcal{G}(x^{N},\min\set{\delta_{N+1},\nu_N})-\nabla f(x^{N})}\le C\min\set{\delta_{N+1},\nu_N}\le C\nu_N\le \dfrac{1}{3}\norm{\nabla f(x^N)}.
\end{align*}
Combining this with $g^N=\mathcal{G}(x^{N},\min\set{\delta_{N+1},\nu_N})$ from \eqref{short rela} provides the estimate
\begin{align*}
\norm{g^{N}-\nabla f(x^N)}\le \dfrac{1}{3}\norm{\nabla f(x^N)}
\end{align*}
which implies by the triangle inequality that
\begin{align*}
\norm{g^N}&\ge \norm{\nabla f(x^N)}-\norm{g^N-\nabla f(x^N)}\ge 2 \norm{g^N-\nabla f(x^N)}.
\end{align*}
Employing the latter together  with \eqref{choi bar t} and $x^N=x^K$ yields
\begin{align}\label{uniform con}
f(x^N-tg^N)\le f(x^N)-\beta t\norm{g^N}^2 \text{ for all }t\in (0,\bar t].
\end{align}
It follows from \eqref{uniform con} and the choice of parameters that
\begin{align*}
\max\big\{t\;\big|\;f(x^N-t g^N)\le f(x^N)-\beta t\norm{g^N}^2,\;t=\bar \tau,\bar \tau\gamma,\bar \tau\gamma^2,\ldots\big\}>\gamma\bar t>t_N^{\min},
\end{align*}
which implies in turn by Step~2 of Algorithm~\ref{GDF C11} that
\begin{align*}
t_N=\max\big\{t\;|\;f(x^N-t g ^N)\le f(x^N)-\beta t\norm{g^N}^2,\;t=\bar \tau,\bar\tau\gamma,\bar \tau\gamma^2,\ldots \big\}>t_N^{\min}.
\end{align*}
By Step~3 of Algorithm~\ref{GDF C11}, we conclude that $\tau_N=t_N>0$, a contradiction which completes the proof.
\end{proof}
Now we are ready the establish the convergence properties of Algorithm~\ref{GDF C11}.
\begin{Theorem}\label{theo c11}
Let $\set{x^k}$ be the sequence generated by Algorithm~{\rm\ref{GDF C11}} and assume that $\nabla f(x^k)\ne 0$ for all $k\in\N.$ Then either $f(x^k)\rightarrow-\infty$, or the following assertions hold:

{\bf(i)} Every accumulation point of $\set{x^k}$ is a stationary point of $f$.

{\bf(ii)} If the sequence $\set{x^k}$ is bounded, then the set of accumulation points of $\set{x^k}$ is nonempty, compact, and connected in $\R^n$.

{\bf(iii)} If $\set{x^k}$ has an isolated accumulation point, then this sequence converges to it.
\end{Theorem}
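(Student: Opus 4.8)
The plan is to dispatch assertions (ii) and (iii) as immediate consequences of the Ostrowski condition and to concentrate the real effort on the stationarity claim (i); throughout I assume $f(x^k)\not\to-\infty$. First I would record the master energy inequality. On a successful step ($\tau_k>0$) the accepted linesearch condition in \eqref{GDF linesearch} gives $f(x^k)-f(x^{k+1})\ge\beta\tau_k\norm{g^k}^2$, while on an unsuccessful step $x^{k+1}=x^k$; hence $\set{f(x^k)}$ is nonincreasing and, since it does not diverge to $-\infty$, it converges and $\sum_k\brac{f(x^k)-f(x^{k+1})}<\infty$. Because $\norm{x^{k+1}-x^k}=\tau_k\norm{g^k}$, this yields
\[
\sum_k\tau_k\norm{g^k}^2<\infty,\qquad\text{hence}\qquad\sum_k\norm{x^{k+1}-x^k}\,\norm{g^k}<\infty,
\]
which is exactly the hypothesis of Lemma~\ref{stationary point lemma} with $d^k=g^k$. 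As a byproduct, using $\tau_k\le\bar\tau$ I get $\norm{x^{k+1}-x^k}^2=\tau_k\brac{\tau_k\norm{g^k}^2}\le\bar\tau\,\tau_k\norm{g^k}^2\to 0$, so the Ostrowski condition $\norm{x^{k+1}-x^k}\to 0$ holds unconditionally. Then (ii) follows from Lemma~\ref{lemma: ostrowski}(i) and (iii) from Lemma~\ref{lemma: ostrowski}(ii).

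The crux is to prove $\liminf_k\norm{g^k}=0$, which I would do by contradiction, assuming $\norm{g^k}\ge\ve$ for all large $k$. Then $\sum_k\tau_k<\infty$ and $\sum_k\norm{x^{k+1}-x^k}\le\ve^{-1}\sum_k\tau_k\norm{g^k}^2<\infty$, so $\set{x^k}$ is Cauchy, converges, lies in some bounded set $\Omega$, and $\tau_k\to0$. If only finitely many steps were unsuccessful, then eventually $t^{\min}_k=\bar t^{\min}>0$ and $\tau_k=t_k\ge\bar t^{\min}$ on the infinitely many successful steps guaranteed by Proposition~\ref{prop finite}, contradicting $\tau_k\to0$. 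Hence infinitely many unsuccessful steps occur, forcing $C_k\to\infty$ and $t^{\min}_k\to0$. Invoking the local approximation \eqref{local approx} on $\Omega$ with $\Delta=\delta_1$ together with the selection rule \eqref{short rela}, namely $\norm{g^k}>\mu C_k\delta_{k+1}$, I obtain $\norm{g^k-\nabla f(x^k)}\le C\delta_{k+1}<\frac{C}{\mu C_k}\norm{g^k}$, so the angle condition $2\norm{g^k-\nabla f(x^k)}\le\norm{g^k}$ holds once $C_k\ge 2C/\mu$. Lemma~\ref{lemm t uniform} then supplies a uniform $\bar t>0$ for which the Armijo decrease holds along the whole segment for all such $k$; once $t^{\min}_k<\gamma\bar t$, the backtracking must terminate at some $t_k>\gamma\bar t>t^{\min}_k$, i.e. the step is successful, contradicting its being unsuccessful. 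This contradiction establishes $\liminf_k\norm{g^k}=0$.

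To finish (i), let $\ox$ be any accumulation point of $\set{x^k}$. Since $0$ is then an accumulation point of $\set{g^k}$ and the summability $\sum_k\norm{x^{k+1}-x^k}\,\norm{g^k}<\infty$ from the first paragraph holds, Lemma~\ref{stationary point lemma} furnishes an infinite index set $J$ along which $x^k\to\ox$ and $g^k\to0$. Along $J$ the estimate $\delta_{k+1}<\norm{g^k}/(\mu C_1)\to0$ together with the local approximation gives $\norm{g^k-\nabla f(x^k)}\le C\delta_{k+1}\to0$, whence $\nabla f(x^k)\to0$ on $J$; continuity of $\nabla f$ yields $\nabla f(\ox)=0$. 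As $\ox$ was arbitrary, (i) follows.

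I expect the main obstacle to be precisely the $\liminf$ step: one must rule out the pathology of infinitely many unsuccessful iterations by coupling the blow-up $C_k\to\infty$ (which upgrades the error bound into the angle condition) with the uniform linesearch estimate of Lemma~\ref{lemm t uniform}. A delicate point is that the local approximation requires a single constant $C$ on a bounded set, yet boundedness of $\set{x^k}$ is not hypothesized in (i); the resolution is that boundedness is extracted for free from the contradiction hypothesis itself, since $\norm{g^k}\ge\ve$ makes $\set{\norm{x^{k+1}-x^k}}$ summable and hence $\set{x^k}$ convergent.
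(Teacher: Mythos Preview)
Your argument is correct. The energy inequality, the Ostrowski consequence for (ii)--(iii), and the use of Lemma~\ref{stationary point lemma} to transfer $g^k\overset{J}{\to}0$ into $\nabla f(\ox)=0$ all match the paper. Where you depart from the paper is precisely in the proof that $\liminf_k\norm{g^k}=0$. The paper, under the contradiction hypothesis $\norm{g^k}\ge\ve$, works on the \emph{successful} iterations (an infinite set by Proposition~\ref{prop finite}): since $\tau_k\to0$, eventually $\tau_k<\bar\tau$, so the previous backtracking trial $\gamma^{-1}\tau_k$ violated Armijo; the mean value theorem then gives $\la g^k,\nabla f(\tilde x^k)\ra<\beta\norm{g^k}^2$, and passing to the limit (using $g^k\to\nabla f(\ox)$ via $\nu_k\downarrow0$, and $\tilde x^k\to\ox$) yields $\norm{\nabla f(\ox)}^2\le\beta\norm{\nabla f(\ox)}^2$, a contradiction with $\beta<1/2$. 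Your route instead splits on the number of unsuccessful steps and, in the infinite case, exploits $C_k\to\infty$ to force the angle condition and then invokes Lemma~\ref{lemm t uniform} to show that \emph{every} large step is successful---which is exactly the mechanism the paper reserves for Claim~\ref{cl 1 global conve} in Theorem~\ref{KL GDF C11}, where boundedness of $\set{x^k}$ is assumed. Your key observation that the contradiction hypothesis itself manufactures boundedness (via $\sum_k\norm{x^{k+1}-x^k}<\infty$) is what lets you transplant that argument here. The trade-off: the paper's mean-value argument is more self-contained and never needs to know how many unsuccessful steps occur, while your approach reuses Lemma~\ref{lemm t uniform} and thereby unifies the proofs of Theorems~\ref{theo c11} and \ref{KL GDF C11}. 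Two minor points: in your finite-unsuccessful case you do not actually need Proposition~\ref{prop finite} (all large steps are successful, not merely infinitely many), and your claim ``$t_k>\gamma\bar t$'' tacitly assumes $\bar t\le\bar\tau$, which is harmless since $\bar t$ from Lemma~\ref{lemm t uniform} may be shrunk.
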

\begin{proof}
 First it follows from Steps~2 and 3 of Algorithm~\ref{GDF C11} that 
\begin{align}\label{linesearch ineq}
\beta \tau_k\norm{g^k}^2\le f(x^k)-f(x^{k+1})\;\text{ for all }\;k\in\N,
\end{align}
which tells us that $\set{f(x^k)}$ is nonincreasing. If $f(x^k)\rightarrow-\infty$, there is nothing to prove; so we assume that $f(x^k)\not \rightarrow-\infty$, which implies by the nonincreasing property of $\set{f(x^k)}$ that $\inf f(x^k)>-\infty$. Summing up the inequalities in \eqref{linesearch ineq} over $k=1,2,\ldots$ with taking into account that $x^{k+1}=x^k-\tau_kg^k$ from the update in Step~3 of Algorithm~\ref{GDF C11} gives us
\begin{align}\label{two series}
\sum_{k=1}^\infty \tau_k\norm{g^k}^2<\infty\;\text{ and }\; \sum_{k=1}^\infty \norm{g^k}\norm{x^{k+1}-x^k}<\infty.
\end{align}

We divide the proof of (i) into two parts by showing first that the origin is an accumulation point of $\set{g^k}$ and then employing Lemma~\ref{stationary point lemma} to establish the stationarity of all the accumulation points of $\set{x^k}.$
\setcounter{Claim}{0}
\begin{Claim}\label{cl 1} The origin $0\in\R^n$ is an accumulation point of the sequence $\set{g^k}$.
\end{Claim}
\noindent 
Arguing by contradiction, suppose that there are numbers $\varepsilon>0$ and $K\in\N$ such that
\begin{align}\label{gk lowerbound}
\norm{g^k}\ge \varepsilon\;\text{ for all }k\ge K.
\end{align}
Combining this with \eqref{two series} gives us $\tau_k\downarrow0$ and $\sum_{k=1}^\infty\norm{x^{k+1}-x^k}<\infty$. The latter implies that $\set{x^k}$ converges to some $\bar x\in\R^n$. By taking a larger $K$, we can assume that $\tau_k<\bar \tau$ for all $k\ge K$. Let $\mathcal{N}$ be the set of all $k\in\N$ such that $\tau_k>0$. It follows from Proposition~\ref{prop finite} that $\mathcal{N}$ is infinite. Hence we can take any $k\ge K$ with $k\in\mathcal{N}$ and get that $\tau_k\in(0,\bar\tau)$. By Step~3 of Algorithm~\ref{GDF C11}, we deduce that $\tau_k=t_k\in[t^{\min}_k,\bar\tau)$. Fixing some such $k$, we get from the exit condition in Step~2 of Algorithm~\ref{GDF C11} that
\begin{align}\label{exit condition}
-\gamma^{-1}\beta \tau_k\norm{g^k}^2<f(x^{k}-\gamma^{-1}\tau_kg^k)-f(x^k).
\end{align}
The classical mean value theorem gives us $\tilde{x}^k\in[x^k,x^k-\gamma^{-1}\tau_kg^k]$ such that
\begin{align}\label{def xtilde}
f(x^{k}-\gamma^{-1}\tau_kg^k)-f(x^k)=-\gamma^{-1}\tau_k\dotproduct{g^k,\nabla f(\tilde{x}^k)}.
\end{align}
Combining this with \eqref{exit condition} yields
\begin{align*}
-\gamma^{-1}\beta \tau_k\norm{g^k}^2<-\gamma^{-1}\tau_k\dotproduct{g^k,\nabla f(\tilde{x}^k)},
\end{align*}
which implies by dividing both sides of the inequality by $-\gamma^{-1}\tau_k<0$ that
\begin{align}\label{dot ineq gk}
\dotproduct{g^k,\nabla f(\tilde{x}^k)}<\beta\norm{g^k}^2\;\text{ for all }\;k\ge K,\;k\in\mathcal{N}.
\end{align}
Take some neighborhood $\Omega$ of $\bar x$ and $\Delta>0$. Since $\mathcal{G}$ is a local approximation of $\nabla f$ under condition \eqref{local approx}, there is $C>0$ such that
\begin{align}\label{bound 1}
\norm{\mathcal{G}(x,\delta)-\nabla f(x)}\le C\delta\;\text{ whenever }\;0<\delta\le \Delta\;\text{ and }\;x\in\Omega.
\end{align}
Since $\nu_k\downarrow 0$ and $x^k\rightarrow\bar x$, by taking a larger $K$ we can assume that $\nu_k<\Delta$ and $x^k\in \Omega$ for all $k\ge K$. Using this together with \eqref{bound 1} and  $g^k=\mathcal{G}(x^k,\min\set{\delta_{k+1},\nu_k})$ in \eqref{short rela}, we get that
\begin{align*}
\norm{g^k-\nabla f(x^k)}=\norm{\mathcal{G}(x^k,\min\set{\delta_{k+1},\nu_k})-\nabla f(x^k)}\le C\min\set{\delta_{k+1},\nu_k} \le C\nu_k.
\end{align*}
Combining the latter with $x^k\rightarrow \bar x$, $\nu_k\downarrow0$ as $k\rightarrow\infty$, and the continuity of $\nabla f$ gives us
\begin{align}\label{gk bounded}
g^k\rightarrow\nabla f(\bar x)\text{ as }k\rightarrow\infty,
\end{align}
which yields $\norm{\nabla f(\bar x)}>0$ by \eqref{gk lowerbound}. It follows from \eqref{gk bounded}, $\tau_k\downarrow0$, $x^k\rightarrow\bar x$, and $\tilde{x}^k\in[x^k,x^k-\gamma^{-1}\tau_kg^k]$ for all $k\ge K,k\in\mathcal{N}$ that $\tilde{x}^k\overset{\mathcal{N}}{\rightarrow}\bar x$. Letting $k\overset{\mathcal{N}}{\rightarrow}\infty$ in \eqref{dot ineq gk} with taking into account the convergence above and \eqref{gk bounded} brings us to the estimate
\begin{align*}
\norm{\nabla f(\bar x)}^2\le \beta\norm{\nabla f(\bar x)}^2.
\end{align*}
This contradicts $\beta<\frac{1}{2}$ and $\norm{\nabla f(\bar x)}>0$. Thus the origin is an accumulation point of $\set{g^k}$ as claimed.
\begin{Claim}Every accumulation point of $\set{x^k}$ is a stationary point of $f$.
\end{Claim}
\noindent Take any accumulation point $\bar x$ of $\set{x^k}$. Using  Claim~\ref{cl 1}, the second inequality in \eqref{two series}, and Lemma~\ref{stationary point lemma} tells us that there is an infinite set $J\subset \N$ such that 
\begin{align*}
x^k\overset{J}{\rightarrow}\bar x\text{ and }g^k\overset{J}{\rightarrow}0.
\end{align*}
Take a neighborhood $\Omega$ of $\bar x$ and $\Delta>0$. Since $\mathcal{G}$ is a local approximation of $\nabla f$ under condition \eqref{local approx}, there exists $C>0$ for which
\begin{align}\label{local in proof}
\norm{\mathcal{G}(x,\delta)-\nabla f(x)}\le C\delta\;\text{ whenever }\;0<\delta\le \Delta\;\text{ and }\;x\in\Omega.
\end{align}
Since $\nu_k\downarrow 0$ and $x^k\overset{J}{\rightarrow}\bar x$, we can select $K\in \N$ so that $\nu_k\le\Delta$ and $x^k\in \Omega$ for all $k\ge K,k\in J$. This ensures together with \eqref{local in proof} that
\begin{align*}
\norm{g^k-\nabla f(x^k)}=\norm{\mathcal{G}(x^k,\min\set{\delta_{k+1},\nu_k})-\nabla f(x^k)}\le C\nu_k\;\text{ for all }\;k\ge K,\;k\in J.
\end{align*}
Employing $g^k\overset{J}{\rightarrow}0$ and $\nu_k\downarrow0$ as above, we deduce that $\nabla f(x^k)\overset{J}{\rightarrow}0$, and hence $\nabla f(\bar x)=0.$ Therefore, $\bar x$ is a stationary point of $f,$ which justifies (i).

Now we verify (ii) and (iii) simultaneously. It follows from \eqref{two series} and $\tau_k\le 1$ for all $k\in\N$ by the choice of $\tau_k$ in Step~3 of Algorithm~\ref{GDF C11} that
\begin{align*}
\sum_{k=1}^\infty\norm{x^{k+1}-x^k}^2=\sum_{k=1}^\infty \tau_k^2\norm{g^k}^2\le\bar\tau\sum_{k=1}^\infty \tau_k\norm{g^k}^2<\infty, 
\end{align*}
which implies that $\norm{x^{k+1}-x^k}\rightarrow 0$. Then both assertions (ii) and (iii) follow from Lemma~\ref{lemma: ostrowski}.
\end{proof} 
The next result establishes the global convergence with convergence rates of the iterates $\set{x^k}$ in Algorithm~\ref{GDF C11} under the KL property and the boundedness of $\set{x^k}$. We have already discussed the KL property in Remark~\ref{algebraic}. The boundedness of $\set{x^k}$ is also a standard assumption that appears in many works on gradient descent methods; see, e.g., \cite[Theorem~4.1]{attouch13}, \cite[Theorem~1]{josz23}, and \cite[Assumption~7]{lee19}.
\begin{Theorem}\label{KL GDF C11}
Let $\set{x^k}$ be the sequence of iterates generated by Algorithm~{\rm\ref{GDF C11}}. Assuming that $\nabla f(x^k)\ne 0$ for all $k\in\N$ and that $\set{x^k}$ is bounded yields the assertions:

{\bf(i)} If $\bar x$ is an accumulation point of $\set{x^k}$ and $f$ satisfies the KL property at $\bar x$, then $x^k\rightarrow\bar x$ as $k\to\infty$.

{\bf(ii)} If in addition to {\rm(i)}, the KL property at $\bar x$ is satisfied with $\psi(t)=Mt^{q}$ for some $M>0,q\in(0,1)$, then the following convergence rates are guaranteed:\\
$\bullet$ If $q\in(0,1/2]$, then the sequence $\set{x^k}$ converges linearly to $\bar x$.\\
$\bullet$ If $q\in(1/2,1)$, then we have the estimate
\begin{align*}
\norm{x^k-\bar x}=\mathcal{O}\big( k^{-\frac{1-q}{2q-1}}\big).
\end{align*}
\end{Theorem}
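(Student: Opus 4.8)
The plan is to reduce both assertions to the abstract machinery already prepared: Proposition~\ref{general convergence under KL} for the global convergence in (i) and Proposition~\ref{general rate} for the rates in (ii). Writing the iteration as $x^{k+1}=x^k+\tau_kd^k$ with descent direction $d^k:=-g^k$, so that $\|d^k\|=\|g^k\|$, I must verify for all large $k$ the two estimates in \eqref{two conditions}, namely $f(x^k)-f(x^{k+1})\ge\beta\tau_k\|g^k\|^2$ and $\|\nabla f(x^k)\|\le\alpha\|g^k\|$, together with $x^{k+1}\ne x^k$ and boundedness of $\{\tau_k\}$ away from $0$; Remark~\ref{rmk two KL} then converts these into (H1), (H2) for part (i), while Proposition~\ref{general rate} gives part (ii) directly. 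Since $\{x^k\}$ is bounded and $f$ is continuous, $f(x^k)\not\to-\infty$, so the degenerate alternative of Theorem~\ref{theo c11} does not arise and an accumulation point exists. I would fix once and for all a bounded set $\Omega\supset\{x^k\}$ and a level $\Delta>0$, and use \eqref{local approx} to obtain a single constant $C>0$ valid on $\Omega\times(0,\Delta]$.

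The heart of the argument, and the step I expect to be the main obstacle, is showing that the backtracking mechanism eventually stabilizes: that $\tau_k=0$ occurs only finitely often, so that $C_k\equiv\bar C$ and $t^{\min}_k\equiv\bar t^{\min}$ are constant for large $k$. To this end I would apply Lemma~\ref{lemm t uniform} to $\Omega$ to produce a uniform $\bar t>0$ for which the Armijo inequality in \eqref{GDF linesearch} holds for every $x\in\Omega$, every $g$ with $2\|g-\nabla f(x)\|\le\|g\|$, and every $t\in(0,\bar t]$. From Step~1, recorded in \eqref{short rela}, one has $\|g^k-\nabla f(x^k)\|\le C\delta_{k+1}<C\|g^k\|/(\mu C_k)$, so whenever $C_k\ge C$ the angle condition $2\|g^k-\nabla f(x^k)\|\le\|g^k\|$ holds because $\mu>2$. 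Arguing by contradiction, if $\tau_k=0$ infinitely often then $C_k\to\infty$ and $t^{\min}_k\downarrow0$ by Step~3; choosing $N$ large enough that $C_N\ge C$ and $t^{\min}_N\le\gamma\bar t$, the uniform decrease from Lemma~\ref{lemm t uniform} forces the linesearch to exit with $t_N\ge\gamma\bar t\ge t^{\min}_N$, hence $\tau_N>0$ and $C_{N+1}=C_N$, $t^{\min}_{N+1}=t^{\min}_N$; inductively $\tau_k>0$ for all $k\ge N$, contradicting the hypothesis. Thus $\tau_k=0$ only finitely often, which yields the stabilization and, directly from Step~3, the lower bound $\tau_k=t_k\ge t^{\min}_k=\bar t^{\min}>0$.

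With stabilization in hand the remaining verifications become routine, and this is where the boundedness hypothesis on $\{x^k\}$ is essential, since it lets $C$ and $\bar t$ be chosen for one fixed $\Omega$ covering the whole trajectory. The first inequality in \eqref{two conditions} is exactly \eqref{linesearch ineq}. For the second, for $k\ge N$ with $C_k=\bar C$ the bound $\|g^k-\nabla f(x^k)\|\le C\delta_{k+1}<C\|g^k\|/(\mu\bar C)$ gives $\|\nabla f(x^k)\|\le\|g^k\|+C\|g^k\|/(\mu\bar C)=\alpha\|g^k\|$ with $\alpha:=(\mu\bar C+C)/(\mu\bar C)$; moreover $\|g^k\|>\mu C_k\delta_{k+1}>0$ and $\tau_k>0$ ensure $x^{k+1}\ne x^k$. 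Invoking Remark~\ref{rmk two KL}(i) I obtain (H1) and (H2), so Proposition~\ref{general convergence under KL} delivers $x^k\to\bar x$ under the KL property, proving (i). For (ii), all hypotheses of Proposition~\ref{general rate} now hold with $d^k=-g^k$ and $\{\tau_k\}$ bounded away from $0$, so the linear rate for $q\in(0,1/2]$ and the estimate $\|x^k-\bar x\|=\mathcal{O}\big(k^{-\frac{1-q}{2q-1}}\big)$ for $q\in(1/2,1)$ follow at once.
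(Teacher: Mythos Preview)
Your proposal is correct and mirrors the paper's proof essentially line by line: both fix a bounded $\Omega\supset\{x^k\}$ and $\Delta>0$ to extract a single local-approximation constant $C$, prove stabilization of $\{C_k\}$ and $\{t^{\min}_k\}$ by the same contradiction via Lemma~\ref{lemm t uniform}, and then feed \eqref{linesearch ineq}, the bound $\|\nabla f(x^k)\|\le(1+C/(\mu\bar C))\|g^k\|$, and $\tau_k\ge\bar t^{\min}$ into Remark~\ref{rmk two KL}, Proposition~\ref{general convergence under KL}, and Proposition~\ref{general rate}. The only detail you leave implicit is that the estimate $\|g^k-\nabla f(x^k)\|\le C\delta_{k+1}$ requires $\min\{\delta_{k+1},\nu_k\}\le\Delta$, which the paper secures by picking $K$ with $\nu_k<\Delta$ for $k\ge K$; since $\nu_k\downarrow0$ this is immediate.
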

\begin{proof}
Let $\Omega:=\set{x^k}$, and let $\Delta>0$. Since $\mathcal{G}$ is a local approximation of $\nabla f$ satisfying condition \eqref{local approx}, there exists a positive number $C$ such that
\begin{align}\label{accum bound}
\norm{\mathcal{G}(x,\delta)-\nabla f(x)}\le C\delta\;\text{ whenever }\;x\in\Omega\;\text{ and }\;0<\delta\le \Delta.
\end{align}
Select $K\in\N$ so that $\nu_k<\Delta$ for all $k\ge K$, which implies by \eqref{accum bound} and the choice of $g^k$ in Step~1 of Algorithm~\ref{GDF C11} the relationships
\begin{align}\label{equa norm gk}
\norm{g^k-\nabla f(x^k)}&= \norm{\mathcal{G}(x^k,\min\set{\delta_{k+1},\nu_k})-\nabla f(x^k)}\nonumber\\
&\le C\min\set{\delta_{k+1},\nu_k}\le C\delta_{k+1}\;\text{ for all }\;k\ge K.
\end{align}
We split the proof of the result into two parts by showing first that the sequences $\set{C_k}$ and $\set{t_k^{\min}}$ are constant after a finite number of iterations and verifying then the convergence of $\set{x^k}$ in (i) with the rates in (ii) by using Propositions~\ref{general convergence under KL} and \ref{general rate}.
\setcounter{Claim}{0}
\begin{Claim}\label{cl 1 global conve}
There exists $k_0\in \N$ such that $C_k=C_{k_0}$ and $t^{\min}_k=t^{\min}_{k_0}$ for all $k\ge k_0.$
\end{Claim}
\noindent Arguing by contradiction, suppose that such a number $k_0$ does not exist. By the construction of $\set{C_k}$ and $\set{t^{\min}_k}$ in Step~3 of Algorithm~\ref{GDF C11}, we deduce that $C_k\uparrow \infty$ and $t^{\min}_k\downarrow 0$ as $k\to\infty$. Since $\Omega$ is bounded, Lemma~\ref{lemm t uniform} allows us to find $\bar t\in(0,1)$ for which
\begin{align}\label{linesearch proof}
f(x-tg)\le f(x)-\beta t\norm{g}^2\;\text{ whenever }\; x\in \Omega,\;\norm{g-\nabla f(x)}\le\dfrac{1}{2}\norm{g},\;\text{ and }\;t\in(0,\bar t].
\end{align}
Using the aforementioned properties of $\{C_k\}$ and 
$\{t^{\min}_k\}$, we get $N\ge K$ such that $C_k>C$ and $t^{\min}_k<\gamma\bar t$ for all $k\ge N$. Fix such a number $k$ and then combine the condition $\norm{g^k}>\mu C_k\delta_{k+1}$ from \eqref{choice gk} with $C_k>C$, $\mu>2$, and \eqref{equa norm gk}. This gives us 
\begin{align*}
\norm{g^k}>\mu C_k\delta_{k+1}\ge \mu C\delta_{k+1}\ge 2 \norm{g^k-\nabla f(x^k)},
\end{align*}
which implies together with $x^k\in\Omega$ and \eqref{linesearch proof} the estimate
\begin{align*}
f(x^k-tg^k)\le f(x^k)-\beta t\norm{g^k}^2 \text{ for all }t\in(0,\bar t]
\end{align*}
and thus tells us that $t_k>\gamma\bar t>t_k^{\min}$. Employing Step~3 of Algorithm~\ref{GDF C11} yields $t_{k+1}^{\min}=t_k^{\min}$. Since the latter holds whenever $k\ge N$, we conclude that the equality $t_k^{\min}=t_N^{\min}$ is satisfied for all $k\ge N$. This contradicts the condition $t^{\min}_k\downarrow0$ as $k\to\infty$ and hence justifies the claimed assertion.
\begin{Claim}
All the assertions in {\rm(i)} and {\rm(ii)} are fulfilled.
\end{Claim}
\noindent From Step 2 and Step 3 of Algorithm~\ref{GDF C11}, we deduce that 
\begin{align}\label{first ineq}
f(x^k)-f(x^{k+1})\ge \beta \tau_k\norm{g^k}^2\;\text{ for all }\;k\in\N.
\end{align}
Defining $N:=\max\set{K,k_0}$ with $k_0$ taken from Claim~\ref{cl 1 global conve} gives us the equalities
\begin{align}\label{Ck=CN}
C_k=C_N\;\text{ and }\;t_k^{\min}=t_N^{\min}\;\text{ whenever }\;k\ge N.
\end{align}
Combining $C_k=C_N$ with \eqref{equa norm gk} and $\norm{g^k}>\mu C_k\delta_{k+1}$ from \eqref{choice gk} ensures that
\begin{align}\label{second ineq}
\norm{\nabla f(x^k)}&\le \norm{g^k}+C\delta_{k+1} \nonumber \\
&\le \norm{g^k}+\dfrac{C}{\mu C_N}\norm{g^k} =\alpha\norm{g^k} \;\text{ for all }\;k\ge N,
\end{align}
where $\alpha:=1+\frac{C}{\mu C_N}$. In addition, we have $t_{k+1}^{\min}=t_{k}^{\min}=t_N^{\min}$ in \eqref{Ck=CN}, which implies together with Step~3 of Algorithm~\ref{GDF C11} the relationships
\begin{align}\label{third ineq}
\tau_k=t_k\ge t_k^{\min}=t_N^{\min}\;\text{ as }\;k\ge N
\end{align}
confirming the boundedness of $\set{\tau_k}$ from below. If the KL property of $f$ holds at the accumulation point $\bar x$ of $\set{x^k}$, it follows from Remark~\ref{rmk two KL}(i), \eqref{first ineq}, \eqref{second ineq}, and \eqref{third ineq} that assumptions (H1) and (H2) in Proposition~\ref{general convergence under KL} hold. Thus $x^k\rightarrow\bar x$ as $k\to\infty$, which verifies (i).

Assume finally that the KL property at $\bar x$ is satisfied with $\psi(t)=Mt^{q},\;M>0$, and $q\in(0,1)$. The iterative procedure $x^{k+1}=x^k-\tau_kg^k$ in Step~4 of Algorithm~\ref{GDF C11} together with \eqref{third ineq} and $g^k>0$ from Step~1 therein tells us that $x^{k+1}\ne x^k$ for $k\ge N.$ Combining this with \eqref{first ineq}, \eqref{second ineq}, and \eqref{third ineq} verifies all the assumptions of Proposition~\ref{general rate} and therefore completes the proof of the theorem.
\end{proof}\vspace*{-0.2in}

\section{Local Convergence of General Derivative-Free Methods to Nonisolated Local Minimizers}\label{sec:6}
In this section, we develop appropriate versions of both Algorithm~\ref{GDF} and Algorithm~\ref{GDF C11} and obtain efficient conditions that guarantee the {\em local convergence} with constructive convergence rates of these algorithms to {\em nonisolated local minimizers}.

The following general algorithm of this type covers local versions of both Algorithms~\ref{GDF} and \ref{GDF C11}. 

\begin{longfbox}
\begin{Algorithm}[General derivative-free (GDF) method]\hlabel{GDF general}
\setcounter{Step}{-1}
\begin{Step}\rm Choose a local approximation $\mathcal{G}$ of $\nabla f$ under condition \eqref{local approx}. Select an initial point $x^1\in\R^n,$ an initial sampling radius $\delta_1>0,$ a sequence $\set{C_k}\subset \R_+,$ a reduction factor $\theta\in(0,1)$ and a scaling factor $\mu>2$. Choose a sequence of manually controlled sampling radii $\set{\nu_k}\subset[0,\infty)$. Set $k:=1$.
\end{Step}
\begin{Step}[approximate gradient]\rm Find $g^k$ and the smallest nonnegative integer $i_k$ such that
\begin{align*}
g^k=\mathcal{G}\big(x^k,\min\set{\theta^{i_k}\delta_k,\nu_k}\big)\;
\text{ and }\;\norm{g^k}>\mu C_k\theta^{i_k}\delta_k.
\end{align*}
Then set $\delta_{k+1}:=\theta^{i_k}\delta_k.$
\end{Step}
\begin{Step}[update]\rm
Choose a stepsize $\tau_k\ge 0$ and set $x^{k+1}:=x^k-\tau_kg^k.$ Go back to Step 1.
\end{Step}
\end{Algorithm}
\end{longfbox}\\[2ex]
Similarly to Remark~\ref{Remark C11}, the existence of $g^k$ in Step~1 of Algorithm~\ref{GDF general} is guaranteed if $\nabla f(x^k)\ne 0$. The result addressing the local convergence to local minimizers of Algorithm~\ref{GDF general} is presented below.
\begin{Theorem}\label{GDF local theorem}
Let $f:\R^n\rightarrow\R$ be a $\mathcal{C}^1$-smooth function, let $\bar x\in\R^n$, and let $\Delta>0.$ Assume that $\bar x$ is a local minimizer of $f$ satisfying the KL property at $\bar x$, and that $\nabla f$ is locally Lipschitz continuous around $\bar x$. Then we have the assertions:

{\bf(i)} There exist positive numbers $\xi$, $T$, and $C$ such that for any initial point $x^1 \in \mathbb{B}(\bar{x}, \xi)$, an initial radius $\delta_1 \in (0, \Delta]$, and sequences $\set{\tau_k} \subset [0, T ]$, $\set{C_k} \subset [C, \infty)$, and $\set{\nu_k}\subset [0,\infty)$. it holds we that the iterative sequence $\set{x^k}$ of Algorithm~{\rm\ref{GDF general}} converges provided that $\nabla f(x^k)\ne 0$ for all $k\in\N$.

{\bf(ii)} If in addition the series $\sum_{k=1}^\infty \tau_k$ diverges, then the sequence $\set{x^k}$ converges to a local minimizer $\tilde{x}$ of $f$ with $f(\tilde{x})=f(\bar x)$. Moreover, the following convergence rates are guaranteed if the sequence $\set{\tau_k}$ is bounded away from $0$, and if the KL property is satisfied with $\psi(t)=Mt^q,\;M>0$, and $q\in(0,1)$:\\
$\bullet$ Whenever $q\in(0,1/2]$, the sequences $\set{x^k},\set{\nabla f(x^k)}$, and $f(x^k)$ converge linearly s $k\to\infty$ to the points $\tilde x$, $0\in\R^n$, and $f(\bar x)$, respectively.\\
$\bullet$ For $q\in(1/2,1)$, we get the estimates $\norm{x^k-\tilde x}=\mathcal{O}( k^{-\frac{1-q}{2q-1}})$, $\norm{\nabla f(x^k)}=\mathcal{O}(k^{-\frac{1-q}{2q-1}})$, and $f(x^k)-f(\bar x)=\mathcal{O}(k^{-\frac{2-2q}{2q-1}})$  as $k\to\infty$.        
\end{Theorem}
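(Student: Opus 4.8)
The plan is to treat Algorithm~\ref{GDF general} as a localized inexact gradient descent around $\bar x$ and then run the Kurdyka--\L ojasiewicz machinery of Propositions~\ref{general convergence under KL} and~\ref{general rate} on a fixed neighborhood that the iterates provably never leave. First I would fix the constants once and for all: pick $\rho_1>0$ so small that $\nabla f$ is Lipschitz on $\overline{\mathbb{B}}(\bar x,\rho_1)$ with some constant $L$, that $\overline{\mathbb{B}}(\bar x,\rho_1)$ sits inside the KL neighborhood $U$ of Definition~\ref{KL ine}, and that $\bar x$ minimizes $f$ over this ball. Since $\mathcal{G}$ is a local approximation of $\nabla f$, condition \eqref{local approx} with $\Omega=\overline{\mathbb{B}}(\bar x,\rho_1)$ and the given $\Delta$ supplies a constant $C_0>0$, and I set $C:=C_0$. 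Because $\delta_{k+1}=\theta^{i_k}\delta_k\le\delta_1\le\Delta$ and $C_k\ge C\ge C_0$, Step~1 together with \eqref{local approx} gives the relative-error bound $\norm{g^k-\nabla f(x^k)}\le C_0\delta_{k+1}<\tfrac1\mu\norm{g^k}$ whenever $x^k\in\overline{\mathbb{B}}(\bar x,\rho_1)$; exactly as in the proof of Lemma~\ref{lemm t uniform} this yields $\dotproduct{\nabla f(x^k),g^k}\ge(1-\tfrac1\mu)\norm{g^k}^2$ and $\norm{\nabla f(x^k)}\le(1+\tfrac1\mu)\norm{g^k}$.

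Next I would derive sufficient decrease. With $x^{k+1}=x^k-\tau_kg^k$ and $\tau_k\in[0,T]$, Lemma~\ref{lemma descent} on the segment $[x^k,x^{k+1}]$ gives $f(x^{k+1})\le f(x^k)-\tau_k\dotproduct{\nabla f(x^k),g^k}+\tfrac{L}{2}\tau_k^2\norm{g^k}^2$, so choosing $T:=L^{-1}(1-\mu^{-1})$ absorbs the quadratic term and produces $f(x^k)-f(x^{k+1})\ge\sigma\tau_k\norm{g^k}^2$ with $\sigma:=\tfrac12(1-\mu^{-1})>0$. Introducing the desingularizing function $\Phi(s):=\int_0^s\psi(t)^{-1}\,\d t$, which is finite, increasing, concave, and satisfies $\Phi(0)=0$ by integrability of $1/\psi$, concavity of $\Phi$ together with the KL inequality $\psi(f(x^k)-f(\bar x))\le\norm{\nabla f(x^k)}$ converts the decrease into the step estimate $\norm{x^{k+1}-x^k}\le\sigma^{-1}(1+\mu^{-1})\sbrac{\Phi(f(x^k)-f(\bar x))-\Phi(f(x^{k+1})-f(\bar x))}$. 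This step uses $f(\bar x)<f(x^k)<f(\bar x)+\eta$: the strict lower bound holds because $\nabla f(x^k)\ne0$ forbids $x^k$ from attaining the local-minimum value, and the upper bound is secured by keeping iterates near $\bar x$.

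The main obstacle, and the technical heart of the argument, is the bootstrap keeping $\set{x^k}$ inside $\overline{\mathbb{B}}(\bar x,\rho_1)$, since every inequality above holds only there, yet the estimate that guarantees staying close is derived under that very assumption. I would resolve this by induction: fix $\rho_0<\rho_1$ with enough slack that a single step cannot leave $\overline{\mathbb{B}}(\bar x,\rho_1)$ (using that $\norm{g^k}\le(1-\mu^{-1})^{-1}\norm{\nabla f(x^k)}$ is bounded on the ball by continuity of $\nabla f$), and then choose $\xi>0$ small enough that $\xi+\sigma^{-1}(1+\mu^{-1})\Phi(f(x^1)-f(\bar x))<\rho_0$, which is possible by continuity of $f$ and $\Phi(0)=0$. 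Telescoping the step estimate along the induction bounds the total travel $\sum_j\norm{x^{j+1}-x^j}$ by this same quantity, simultaneously certifying $x^{k+1}\in\mathbb{B}(\bar x,\rho_0)$ and the summability $\sum_k\norm{x^{k+1}-x^k}<\infty$; Cauchyness then gives the convergence $x^k\to\tilde x$ claimed in (i).

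For (ii) I would first identify the limit. Summing the sufficient decrease yields $\sum_k\tau_k\norm{g^k}^2<\infty$, which together with $\sum_k\tau_k=\infty$ forces $\liminf_k\norm{g^k}=0$, hence $\liminf_k\norm{\nabla f(x^k)}=0$ via $\norm{\nabla f(x^k)}\le(1+\mu^{-1})\norm{g^k}$, and continuity gives $\nabla f(\tilde x)=0$. If $f(\tilde x)>f(\bar x)$ held, the KL inequality at $\tilde x\in U$ would force $\norm{\nabla f(\tilde x)}\ge\psi(f(\tilde x)-f(\bar x))>0$, a contradiction; thus $f(\tilde x)=f(\bar x)$, and lying in the local-minimum ball $\tilde x$ is itself a local minimizer. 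Finally, when $\set{\tau_k}$ is bounded away from $0$ and $\psi(t)=Mt^q$, Remark~\ref{algebraic} transfers the KL property with the same exponent to $\tilde x$ (since $f(\tilde x)=f(\bar x)$), and the estimates $f(x^k)-f(x^{k+1})\ge\sigma\tau_k\norm{g^k}^2$ and $\norm{\nabla f(x^k)}\le(1+\mu^{-1})\norm{g^k}$ with $x^{k+1}\ne x^k$ are precisely the hypotheses of Proposition~\ref{general rate} (with $d^k=g^k$), delivering the rates for $\set{x^k}$. The rates for $\set{\nabla f(x^k)}$ then follow from $\norm{\nabla f(x^k)}=\norm{\nabla f(x^k)-\nabla f(\tilde x)}\le L\norm{x^k-\tilde x}$, and those for $\set{f(x^k)-f(\bar x)}$ from $0\le f(x^k)-f(\tilde x)\le\tfrac{L}{2}\norm{x^k-\tilde x}^2$ via Lemma~\ref{lemma descent} at the stationary point $\tilde x$.
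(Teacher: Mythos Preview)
Your proposal is correct and follows essentially the same route as the paper: both arguments fix a Lipschitz ball inside the KL neighborhood, use the local-approximation bound together with $C_k\ge C$ to obtain the relative-error estimate $\norm{g^k-\nabla f(x^k)}<\mu^{-1}\norm{g^k}$, derive sufficient decrease via Lemma~\ref{lemma descent}, convert it through the desingularizing primitive $\Phi=\varphi$ into a telescoping step bound, and run a two-radius induction (your $\rho_0<\rho_1$, the paper's $\rho<2\rho$) to trap the iterates and obtain Cauchyness; the identification of $\tilde x$ via $\sum\tau_k=\infty$ and the rates via Proposition~\ref{general rate} are likewise identical. The only discrepancies are cosmetic choices of constants---the paper takes $T=\min\{(3L)^{-1},(\mu-2)/(\mu L)\}$ and gets the descent constant $1/4$, whereas your $T=L^{-1}(1-\mu^{-1})$ yields $\sigma(1+\mu^{-1})^{-1}$---and your slightly less explicit handling of the one-step slack, which works once one notes $\norm{g^k}\le(1-\mu^{-1})^{-1}L\rho_0$ and picks $\rho_0\le\rho_1/2$.
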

\begin{proof} Since  $f$ satisfies the KL property at $\bar x$, there exist a bounded neighborhood $U$ of $\bar x$, a number $\eta>0$, and a nonincreasing function $\psi:(0,\eta)\rightarrow(0,\infty)$ such that $1/\psi$ is integrable over $(0,\eta)$ and
\begin{align}\label{KL in proof local}
\norm{\nabla f(x)}\ge \psi(f(x)-f(\bar x))\;\text{ for all }\;x\in  U \;\text{ with }\;f(\bar x)< f(x)<f(\bar x)+\eta.
\end{align}
Remark~\ref{algebraic} tells us that \eqref{KL in proof local} also holds if $\bar x$ is replaced by any $\tilde x\in U$ with $f(\tilde x)=f(\bar x)$. Since $\bar x$ is a local minimizer of $f$ and since $f$ is continuous, we can assume by shrinking $ U$ if necessary that $f(\bar x)\le f(x)<f(\bar x)+\eta$ for all $x\in U$. Combining this with \eqref{KL in proof local} ensures that 
\begin{align}\label{KL ineq}
\norm{\nabla f(x)}\ge \psi(f(x)-f(\bar x))>0\;\text{ for all }\;x\in  U\;\text{ with }\;f( x)\ne f(\bar x),
\end{align}
which implies in turn that $f(\bar x)$ is the only critical value of $f$ within $U$. The local Lipschitz continuity of $\nabla f$ around $\bar x$ gives us positive numbers $\rho$ and $L$ such that $\mathbb{B}(\bar x,2\rho)\subset U$ and $\nabla f$ is Lipschitz continuous with the constant $L$ on $\mathbb{B}(\bar x,2\rho)$. Choose further $T :=\min\set{\frac{1}{3L},\frac{\mu-2}{\mu L}}$ and define $\varphi:[0,\eta)\rightarrow[0,\infty)$ by $\varphi(x):=\int_0^x\frac{1}{\psi(t)}dt$ for $x\in(0,\eta)$ with $\varphi(0):=0$. By the right continuity of $\varphi$ at $0$ and the continuity of $f$ at $\bar x$, we find such $\xi\in(0,\rho)$ that 
\begin{align}\label{selection xi}
\norm{x-\bar x}+4\varphi(f(x)-f(\bar x))<\rho\;\text{ for all }\;x\in \mathbb{B}(\bar x,\xi).
\end{align}
Since $\mathcal{G}$ is a local approximation of $\nabla f$ under condition \eqref{local approx}, we find $C>0$ for which 
\begin{align}\label{in proof local approx}
\norm{\mathcal{G}(x,\delta)-\nabla f(x)}\le C\delta\;\text{ whenever }\;(x,\delta)\in U\times(0,\Delta].
\end{align} 

Assuming that $x^1\in \mathbb{B}(\bar x,\xi)$, $\delta_1\in  (0,\Delta]$, $\set{\tau_k}\subset [0,T]$, and $\set{C_k}\subset[C,\infty)$, we now aim at verifying (i). To proceed, let us first prove the following claim. 
\setcounter{Claim}{0}
\begin{Claim}\label{cl 1 local}  Algorithm~{\rm\ref{GDF general}} generates the well-defined iterative sequence $\set{x^k}$, which stays inside $\mathbb{B}(\bar x,\rho)$.
\end{Claim}
\noindent 
Indeed, by $\nabla f(x^k)\ne 0$ for all $k\in\N$, the existence of $g^k$ in Step~2 of Algorithm~\ref{GDF general} is guaranteed in each iteration, which ensures that the iterative sequence $\set{x^k}$ is well-defined. To verify that $\set{x^k}\subset\mathbb{B}(\bar x,\rho)$, we proceed by induction. Fix $K\in\N$ and assume that $x^k\in\mathbb{B}(\bar x,\rho)$ for all $k=1,\ldots,K$. To show that $x^{K+1}\in \mathbb{B}(\bar x,\rho)$, observe that $\delta_k\le \delta_1\le \Delta$ for all $k\in\N$ by the selection of $\delta_1$ and the construction of $\set{\delta_k}$ in Algorithm~\ref{GDF general}. Since $\set{C_k}\subset[C,\infty)$, we deduce from \eqref{in proof local approx} and $g^k=\mathcal{G}(x^k,\min\set{\delta_{k+1},\nu_k})$ in Step~1 of Algorithm~\ref{GDF general} that
\begin{align}\label{ineq IGD}
\norm{g^k-\nabla f(x^k)}\le C\min\set{\delta_{k+1},\nu_k}\le C_k\delta_{k+1} \le\mu^{-1}\norm{g^k}\;\text{ for all }\;k=1,\ldots,K.
\end{align}
It follows from \eqref{ineq IGD} with $k:=K$, the triangle inequality, and the choice of $\mu>2$ that
\begin{align}\label{gk 2 nabla}
\begin{array}{ll}
\norm{\nabla f(x^K)}\ge\norm{g^K}-\norm{g^K-\nabla f(x^K)}>(1-\mu^{-1})\norm{g^K}>\frac{1}{2}\norm{g^K}.
\end{array}
\end{align} 
Since $x^K\in \mathbb{B}(\bar x,\rho)$, we deduce from the Lipschitz continuity of $f$ on $\mathbb{B}(\bar x,2\rho)$ that 
$$
\norm{\nabla f(x^K)}=\norm{\nabla f(x^K)-\nabla f(\bar x)}\le L\norm{x^K-\bar x}\le L\rho.
$$  
Combining this with the update $x^{k+1}=x^k-\tau_kg^k$, $\tau_k\le T \le(3L)^{-1}$, and \eqref{gk 2 nabla} gives us
\begin{align*}
\norm{x^{K+1}-x^K}&=\tau_k\norm{g^K}\le 2T \norm{\nabla f(x^K)}\le 2T L\rho< \rho,
\end{align*}
which means that $x^{K+1}\in\mathbb{B}(\bar x,2\rho)$.
Since $\nabla f(x^{K+1})\ne 0$ and $f(\bar x)$ is the minimum value of $f$ within $\mathbb{B}(\bar x,2\rho)$, we get that $f(x^{K+1})>f(\bar x)$. It follows from the triangle inequality and estimate \eqref{ineq IGD} that
\begin{align*}
\norm{g^k}\ge \norm{\nabla f(x^k)}-\norm{g^k-\nabla f(x^k)}\ge \norm{\nabla f(x^k)}-\mu^{-1} \norm{g^k},
\end{align*}
which implies in turn that
\begin{align}\label{nabla f(xk)<=2gk}
\norm{\nabla f(x^k)}\le(1+\mu^{-1})\norm{g^k}\le 2\norm{g^k}\;\text{ for all }\;k=1,\ldots,K. 
\end{align}
The Lipschitz continuity of $\nabla f$ on $\mathbb{B}(\bar x,2\rho)\supset \set{x^k\;|\;k=1,\ldots,K+1}$ with the constant $L>0$ yields
\begin{subequations}
\begin{align}
f(x^{k+1})-f(x^k)&\le \dotproduct{\nabla f(x^k),x^{k+1}-x^k}+\dfrac{L}{2}\norm{x^{k+1}-x^k}^2\label{eq: descent property}\\
&=-\tau_k\dotproduct{\nabla f(x^k),g^k}+\dfrac{L\tau_k^2}{2}\norm{g^k}^2\label{eq: iterative update}\\
&=-\tau_k\dotproduct{\nabla f(x^k)-g^k,g^k}-\tau_k\norm{g^k}^2+\dfrac{L\tau_k^2}{2}\norm{g^k}^2\nonumber\\
&\le \tau_k\norm{\nabla f(x^k)-g^k}\norm{g^k}-\tau_k\norm{g^k}^2+\dfrac{L\tau_k^2}{2}\norm{g^k}^2\label{eq: cauchy schwarz}\\
&\le \tau_k\frac{1}{\mu}\norm{g^k}^2-\tau_k\norm{g^k}^2+\dfrac{L\tau_k^2}{2}\norm{g^k}^2\label{eq: tk nu gk^2}\\
&\le \norm{\tau_kg^k}\norm{g^k}\Big(\frac{1}{\mu}-1+\dfrac{L\tau_k}{2}\Big)\nonumber\\
&\le -\dfrac{1}{2}\norm{x^{k+1}-x^k}\norm{g^k}\label{eq: choice of tk}\\
&\le-\dfrac{1}{4}\norm{x^{k+1}-x^k}\norm{\nabla f(x^k)}\;\text{ for all }\;k=1,\ldots,K,\label{norm norm}
\end{align}
\end{subequations}
where \eqref{eq: descent property} follows from Lemma~\ref{lemma descent}, \eqref{eq: iterative update} follows from the iterative update $x^{k+1}=x^k-\tau_kg^k$, \eqref{eq: cauchy schwarz} follows from the Cauchy-Schwarz inequality, \eqref{eq: tk nu gk^2} is deduced by $\norm{g^k-\nabla f(x^k)}\le \frac{1}{\mu}\norm{g^k}$ from \eqref{ineq IGD}, \eqref{eq: choice of tk} follows from  $\tau_k\le T \le\dfrac{\mu-2}{\mu L}$, and \eqref{norm norm} is deduced by $\norm{\nabla f(x^k)}\le 2\norm{g^k}$ from \eqref{nabla f(xk)<=2gk}. Therefore,
\begin{align}\label{1 4 descent}
\dfrac{1}{4}\norm{x^{k+1}-x^k}\norm{\nabla f(x^k)} \le f(x^k)-f(x^{k+1})\text{ whenever }k=1,\ldots,K.
\end{align}
As a consequence of the above, we have the inequalities
\begin{align*}
f(\bar x)+\eta> f(x^k)\ge f(x^{K+1})>f(\bar x)\;\text{ for all }\;k=1,\ldots,K+1,
\end{align*}
which ensure together with $x^k\in \mathbb{B}(\bar x,2\rho)$ and \eqref{KL ineq} that $\norm{\nabla f(x^k)}\ge \psi(f(x^k)-f(\bar x))$ for all $k=1,\ldots,K$. Combining the latter with \eqref{1 4 descent} leads us to the conditions
\begin{align*}
\dfrac{1}{4}\norm{x^{k+1}-x^k}&\le \dfrac{f(x^k)-f(x^{k+1})}{\psi(f(x^k)-f(\bar x))}=\int_{f(x^{k+1})}^{f(x^{k})}\dfrac{1}{\psi(f(x^k)-f(\bar x))}dt\\
&\le \int_{f(x^{k+1})}^{f(x^{k})}\dfrac{1}{\psi(t-f(\bar x))}dt\\
&=\varphi(f(x^k)-f(\bar x))-\varphi(f(x^{k+1})-f(\bar x))\;\text{ for all }k=1,\ldots,K,
\end{align*}
where the second inequality follows from the nondecreasing property of $\psi.$ Therefore, the triangle inequality gives us the relationships
\begin{align*}
\norm{x^{K+1}-\bar x}&\le \norm{x_1-\bar x}+\sum_{k=1}^K\norm{x^{k+1}-x^{k}}\\
&\le \norm{x_1-\bar x}+4\sum_{k=1}^K[\varphi(f(x^{k})-f(\bar x))-\varphi(f(x^{k+1})-f(\bar x))]\\
&=\norm{x_1-\bar x}+4\sbrac{\varphi(f(x^1)-f(\bar x))-\varphi(f(x^{K+1})-f(\bar x))}\\
&\le \norm{x_1-\bar x}+4\varphi(f(x^1)-f(\bar x))<\rho,
\end{align*}
where the latter inequality is a consequence of the selection $x^1\in\mathbb{B}(\bar x,\xi)$ and $\xi$ in \eqref{selection xi}. This means that $x^{K+1}\in\mathbb{B}(\bar x,\rho)$. By induction we arrive at $x^k\in \mathbb{B}(\bar x,\rho)$ for all $k\in\N$, which verifies this claim. 

\begin{Claim}
The sequence of iterates $\set{x^k}$ converges to some $\tilde x\in\overline{\mathbb{B}}(\bar x,\rho)$. If in addition we have $\sum_{k=1}^\infty \tau_k=\infty$, then $\tilde x$ is a local minimizer of $f$. 
\end{Claim}
\noindent
Picking any $K\in\N$ and arguing similarly to the proof of Claim~\ref{cl 1 local} with taking into account that $x^k\in\mathbb{B}(\bar x,\rho)$ for all $k\in\N$, we get the estimates
\begin{align*}
\sum_{k=1}^K\norm{x^{k+1}-x^k}\le4\sum_{k=1}^K[\varphi(f(x^{k+1})-f(\bar x))-\varphi(f(x^{k})-f(\bar x))]\le 4\varphi(f(x^1)-f(\bar x)).
\end{align*}
Passing there to the limit as $K\rightarrow\infty$ yields $\sum_{k=1}^\infty \norm{x^{k+1}-x^k}<\infty$, which tells us that $\set{x^k}$ converges to some point $\tilde x\in\overline{\mathbb{B}}(\bar x,\rho)$. Let now $\sum_{k=1}^\infty \tau_k=\infty$ be satisfied. Since $\set{x^k} \subset \mathbb{B}(\bar x,\rho)$, we proceed similarly to the proof of \eqref{eq: choice of tk} in Claim~\ref{cl 1 local} to show that
\begin{align}\label{descent condition KL proof}
f(x^{k+1})-f(x^k)&\le -\dfrac{1}{2}\norm{x^{k+1}-x^k}\norm{g^k}\;\text{ for all }\;k\in\N.
\end{align}
Combining this with the fact that $f(x^k)\ge f(\bar x)$ for all $k\in\N$, we deduce that 
\begin{align*}
\sum_{k=1}^\infty \tau_k\norm{g^k}^2=\sum_{k=1}^\infty \norm{x^{k+1}-x^k}\norm{g^k}&\le 2\sum_{k=1}^\infty (f(x^k)-f(x^{k+1}))\\
&\le 2(f(x^1)-f(\bar x))<\infty.
\end{align*}
Supposing that there is $r>0$ with $\norm{g^k}\ge r$ for all $k$ sufficiently large, the above inequality gives us $\sum_{k=1}^\infty \tau_k<\infty$, which is a contradiction. Therefore, $0\in\R^n$ is an accumulation point of $\set{g^k}$, i.e., there exists an infinite set $J\subset \N$ such that $g^k\overset{J}{\rightarrow}0$. As in the proof of \eqref{ineq IGD} in Claim~\ref{cl 1 local} with taking now $x^k\in \mathbb{B}(\bar x,\rho)$ into account, we get the estimate
\begin{align*}
\norm{g^k-\nabla f(x^k)} \le\mu^{-1}\norm{g^k}\;\text{ for all }\;k\in\N.
\end{align*}
Combining the latter with $g^k\overset{J}{\rightarrow}0$ tells us that $\nabla f(x^k)\overset{J}{\rightarrow}0$. Remembering that $\set{x^k}$ converges to $\tilde x$, we have that $\nabla f(x^k)\rightarrow \nabla f(\tilde x)$ as $k\to\infty$. Therefore, $\nabla f(\tilde x)=0$, i.e., $\tilde{x}$ is a stationary point of $f$ on $\mathbb{B}(\bar x,2\rho)$. Since $f(\bar x)$ is the only critical value of $f$ within $\mathbb{B}(\bar x,2\rho)$, we obtain that $f(\tilde x)=f(\bar x)$, which tells us that $\tilde x$ is a local minimizer of $f$.

Invoke now the assumptions that the sequence $\set{\tau_k}$ is bounded away from $0$ and that the KL property \eqref{KL in proof local} holds with $\psi(t)=Mt^q,\;M>0,$ and$q\in(0,1)$. It follows from Steps~1 and 2 of Algorithm~\ref{GDF general} that $x^{k+1}\ne x^k$ for large $k$. By \eqref{descent condition KL proof} and the iterative procedure $x^{k+1}=x^k-\tau_k g^k$, we get
\begin{align*}
f(x^k)-f(x^{k+1})&\ge \dfrac{1}{2}\tau_k\norm{g^k}^2\;\text{ whenever }\;k\in\N.
\end{align*}    
Arguing similarly to \eqref{nabla f(xk)<=2gk} with taking into account that $x^k\in\mathbb{B}(\bar x,\rho)$ for all $k\in\N$, we deduce that
\begin{align*}
\norm{\nabla f(x^k)}\le 2 \norm{g^k}\;\text{ as }\;k\in\N.
\end{align*}
Therefore, the convergence rates for $\set{x^k}$ in (ii) are immediately obtained by applying Proposition~\ref{general rate} with observing that the KL property \eqref{KL in proof local} also holds with $\psi(t)=Mt^q$, $M>0$, and $q\in(0,1)$ when $\bar x$ is replaced by $\tilde x$. By arguing similarly to the proof of the convergence rates in Theorem~\ref{convergence GDF} and remembering that $\nabla f$ is Lipschitz continuous on $\mathbb{B}(\bar x,2\rho)\supset\set{x^k}$, we verify the desired convergence rates for $\{\nabla f(x^k)\}$ and $\{f(x^k)\}$ in (ii) and thus complete the proof of the theorem. 
\end{proof}
\begin{Remark}\rm
Note that the condition $\nabla f(x^k) \neq 0$ for all $k \in \mathbb{N}$ in Theorem \ref{GDF local theorem} is a standard assumption in the convergence analysis of derivative-free optimization methods since there is no tool available to determine whether $\nabla f(x^k)$ equals zero or not. Similar assumptions can also be found at \cite[Section~4]{conn09} and \cite[Corollary~3.3]{hare13}. Recently, Josz et al. \cite{josz23.2} presented a local convergence analysis for {\em exact momentum methods} with \textit{constant stepsizes}, specifically focusing on {\em semi-algebraic} functions with gradients being {\em locally Lipschitzian everywhere}. It is necessary to emphasize that this analysis does not encompass the obtained convergence properties in Theorem~\ref{GDF local theorem}. Specifically, our work focuses on {\em derivative-free methods} with {\em variable stepsizes} when applied to $\mathcal{C}^{1}$-smooth functions satisfying the KL property. These functions have gradients that are {\em locally Lipschitzian} but only in the vicinity of {\em local minimizers}. Observe also that the local convergence result above does not follow from the one in  \cite[Theorem~2.10]{attouch13}. The latter relies on global conditions (H1) and (H2), along with the local growth condition (H4), which are not presumed in our derivative-free context and may not be satisfied within the scope of our given assumptions.
\end{Remark}
The following two consequences of Theorem~\ref{GDF local theorem} address the local convergence to local minimizers of Algorithm~\ref{GDF} and Algorithm~\ref{GDF C11}. 
\begin{Corollary}\label{corollary 1}
Let $f:\R^n\rightarrow\R$ be a $\mathcal{C}^1$-smooth function with a globally Lipschitz continuous gradient, let $\bar x\in\R^n$, and let $\Delta>0$. Assume that $\bar x$ is a local minimizer of $f$ satisfying the KL property at $\bar x$, and that $\nabla f(x^k)\ne 0$ for all $k\in\N$. Then there exist $\xi,C>0$ such that for any initial point $x^1\in \mathbb{B}(\bar x,\xi)$, any initial sampling radius $\delta_1\in  (0,\Delta]$, any $C_1 \ge C$ and other parameters listed in Algorithm~{\rm\ref{GDF}}, we have that $\set{x^k}$ converges to a local minimizer $\tilde{x}$ of $f$ with $f(\tilde{x})=f(\bar x)$. The convergence rates as in Theorem~{\rm\ref{GDF local theorem}(ii)} are guaranteed if $f$ satisfies the KL property at $\bar x$ with $\psi(t)=Mt^{q}$, $M>0$, and $q\in (0,1)$.
\end{Corollary}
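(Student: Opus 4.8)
The plan is to recognize the DFC iterations of Algorithm~\ref{GDF} as a special case of the general scheme in Algorithm~\ref{GDF general} and then apply Theorem~\ref{GDF local theorem}. First I would observe that global Lipschitz continuity of $\nabla f$ implies local Lipschitz continuity around $\bar x$, and that the global approximation $\mathcal{G}$ employed in Algorithm~\ref{GDF} is also a local approximation of $\nabla f$ by Remark~\ref{rmk error bound}(i). Hence the standing hypotheses of Theorem~\ref{GDF local theorem} are satisfied, and that theorem furnishes positive constants $\xi_0,T_0,C_0$ such that every GDF-trajectory with $x^1\in\mathbb{B}(\bar x,\xi_0)$, $\delta_1\in(0,\Delta]$, $\{\tau_k\}\subset[0,T_0]$, and $\{C_k\}\subset[C_0,\infty)$ enjoys the asserted local convergence.

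Next I would realize DFC as such a GDF-trajectory. Taking the controlled radii in Algorithm~\ref{GDF general} to be the constant $\nu_k\equiv\delta_1$ and noting that $\{\delta_k\}$ is nonincreasing (since $\delta_{k+1}=\theta^{i_k}\delta_k\le\delta_k$, so $\delta_k\le\delta_1$), we get $\min\{\theta^{i_k}\delta_k,\nu_k\}=\theta^{i_k}\delta_k$; thus the approximate-gradient steps of the two algorithms coincide and the two schemes generate identical iterates. The DFC stepsize satisfies $\tau_k\in\{0,\kappa/C_k\}$, and its parameter sequence obeys $C_{k+1}\in\{C_k,rC_k\}$ with $r>1$, so $C_k\ge C_1$ for every $k$.

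I would then fix the corollary's constants by $\xi:=\xi_0$ and $C:=\max\{C_0,\kappa/T_0\}$. For any $C_1\ge C$ this yields $\{C_k\}\subset[C_1,\infty)\subset[C_0,\infty)$, and also $\tau_k\le\kappa/C_k\le\kappa/C_1\le\kappa/C\le T_0$, so $\{\tau_k\}\subset[0,T_0]$; Theorem~\ref{GDF local theorem}(i) then gives convergence of $\{x^k\}$. To obtain convergence to a local minimizer I would verify that $\sum_{k=1}^\infty\tau_k=\infty$: since $\nabla f(x^k)\ne0$ for all $k$, Proposition~\ref{claim 1 sec 5} provides $N$ with $C_k\equiv\bar C$ for $k\ge N$, which forces the descent test in Step~2 of Algorithm~\ref{GDF} to succeed at every such $k$, whence $\tau_k=\kappa/\bar C>0$. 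Thus $\{\tau_k\}$ is bounded away from $0$ and its series diverges, so Theorem~\ref{GDF local theorem}(ii) delivers convergence to a local minimizer $\tilde x$ with $f(\tilde x)=f(\bar x)$ together with the stated rates under the KL property with $\psi(t)=Mt^q$.

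The one point requiring care is the simultaneous enforcement of the two constraints $\tau_k\le T_0$ and $C_k\ge C_0$. Because the DFC stepsize $\kappa/C_k$ shrinks exactly as $C_k$ grows, raising the lower threshold on $C_1$ tightens both constraints at once, and the choice $C=\max\{C_0,\kappa/T_0\}$ is precisely what makes them compatible; the remainder is a direct transcription of the DFC updates into the GDF framework.
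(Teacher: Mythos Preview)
Your proposal is correct and follows essentially the same route as the paper's proof: realize Algorithm~\ref{GDF} as an instance of Algorithm~\ref{GDF general} by taking $\nu_k\ge\delta_1$, bound the DFC stepsizes via $\tau_k\le\kappa/C_1\le T$ after enlarging $C$, and invoke Proposition~\ref{claim 1 sec 5} to get $\tau_k$ eventually constant so that $\sum\tau_k=\infty$. Your explicit choice $C:=\max\{C_0,\kappa/T_0\}$ is exactly what the paper achieves by saying ``by choosing a larger $C$ if necessary, we can assume that $\kappa/C\le T$.''
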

\begin{proof}
By Theorem~\ref{GDF local theorem}, there are $\xi,T,C>0$ such that for any initial point $x^1\in \mathbb{B}(\bar x,\xi)$, initial radius $\delta_1\in (0,\Delta]$, and sequences ${\tau_k}\subset [0,T]$ and ${C_k}\subset [C,\infty)$, it holds that any sequence of iterates generated by Algorithm~\ref{GDF general} exhibits the convergence properties presented in Theorem~\ref{GDF local theorem}. By choosing a larger $C$ if necessary, we can assume that $\kappa/C\le T $, where $\kappa>0$ is the parameter taken from Algorithm~\ref{GDF}.

It suffices to show that Algorithm~\ref{GDF} with initial point $x^1\in\mathbb{B}(\bar x,\xi)$, $C_1\ge C,\;\delta_1\in(0,\Delta]$, and the other parameters therein is a special case of Algorithm~\ref{GDF general} with the parameters listed above, and thus  Algorithm~\ref{GDF} enjoys the desired convergence properties. It is clear from the construction of Algorithm~\ref{GDF} that $C_{k+1}\ge C_k$ for all $k\in\N$, which implies that $C_k\ge C_1\ge C$ whenever $k\in\N$. The selection of $\set{g^k}$ in Step~1 of Algorithm~\ref{GDF general} reduces to that of Algorithm~\ref{GDF} by choosing $\nu_k\ge\delta_1$ for all $k\in\N$. The iterative procedure of Algorithm~\ref{GDF} can be rewritten as
\begin{align*}
x^{k+1}=x^k-\tau_kg^k\;\text{ with either }\;\tau_k=0,\;\text{ or }\;\tau_k=\kappa/C_k\le\kappa/C\le T\;\text{ for all }\;k\in\N
\end{align*}
telling us that $\set{\tau_k}\subset[0,T]$. Proposition~\ref{claim 1 sec 5} guarantees that $C_{k}$ are constant for large $k\in\N$, which ensures by Step~2 of Algorithm~\ref{GDF} that the stepsizes $\tau_k$ are equal to a positive constant for large $k\in\N$. This guarantees that the sequence $\set{\tau_k}$ is bounded away from $0$, and furthermore $\sum_{k=1}^\infty \tau_k=\infty$. All the assumptions in Theorem~\ref{GDF local theorem} are satisfied. Thus $\set{x^k}$ converges to some local minimizer $\tilde x$ of $f$ with $f(\tilde x)=f(\bar x)$,
and in addition the convergence rates as in Theorem~\ref{GDF local theorem}(ii) are guaranteed when $f$ satisfies the KL property at $\bar x$ with $\psi(t)=Mt^{q}$, $M>0$, and $q\in (0,1)$.
\end{proof}
\begin{Corollary}
Let $f:\R^n\rightarrow\R$ be a $\mathcal{C}^1$-smooth function with a locally Lipschitzian gradient, let $\bar x\in\R^n$, and let $\Delta>0$. Assume that $\bar x$ is a local minimizer of $f$, which satisfies the KL property at $\bar x$, and that $\nabla f(x_k)\ne 0$ for all $k\in\N$. Then there are constants $\xi,T,C>0$ such that for any initial point $x^1\in\mathbb{B}(\bar x,\xi)$, any initial sampling radius $\delta_1\in (0,\Delta]$, $\bar \tau \in (0,T], C_1 \ge C$, and the other parameters of Algorithm~{\rm\ref{GDF C11}}, the sequence of iterates $\set{x_k}$ in this algorithm converges to some local minimizer $\tilde{x}$ with $f(\tilde x)=f(\ox)$. The convergence rates as in Theorem~{\rm\ref{GDF local theorem}(ii)} are guaranteed if the sequence $\set{\tau_k}$ is bounded away from $0$, and if $f$ satisfies the KL property at $\bar x$ with $\psi(t)=Mt^{q}$, $M>0$, and $q\in (0,1)$.
\end{Corollary}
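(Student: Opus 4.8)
The plan is to follow the same route as in the proof of Corollary~\ref{corollary 1}: exhibit Algorithm~\ref{GDF C11}, run in the local regime, as a special instance of the general scheme in Algorithm~\ref{GDF general}, and then invoke Theorem~\ref{GDF local theorem}. First I would apply Theorem~\ref{GDF local theorem} at the local minimizer $\bar x$; its hypotheses are met because a globally locally Lipschitzian gradient is in particular locally Lipschitz around $\bar x$, and $\bar x$ satisfies the KL property by assumption. This furnishes constants $\xi, T, C>0$, a radius $\rho$, and the invariance ball $\mathbb{B}(\bar x,\rho)$, such that every run of Algorithm~\ref{GDF general} started at $x^1\in\mathbb{B}(\bar x,\xi)$ with $\delta_1\in(0,\Delta]$, $\set{\tau_k}\subset[0,T]$, and $\set{C_k}\subset[C,\infty)$ produces a well-defined convergent sequence remaining in $\mathbb{B}(\bar x,\rho)$.

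Next I would check that Algorithm~\ref{GDF C11}, run with $x^1\in\mathbb{B}(\bar x,\xi)$, $\delta_1\in(0,\Delta]$, $C_1\ge C$, and $\bar\tau\in(0,T]$, is exactly such an instance. The selection of $g^k$ in Step~1 of Algorithm~\ref{GDF C11} coincides verbatim with Step~1 of Algorithm~\ref{GDF general}, and the sequence $\set{\nu_k}$ prescribed in Algorithm~\ref{GDF C11} is an admissible choice for Algorithm~\ref{GDF general}. Since Step~4 of Algorithm~\ref{GDF C11} updates $C_{k+1}\in\set{C_k,\eta C_k}$ with $\eta>1$, we have $C_k\ge C_1\ge C$, so $\set{C_k}\subset[C,\infty)$; likewise $\tau_k\in\set{0,t_k}$ with $t_k\le\bar\tau\le T$, so $\set{\tau_k}\subset[0,T]$. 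Hence Theorem~\ref{GDF local theorem}(i) applies and guarantees that $\set{x^k}$ is well-defined, stays in $\mathbb{B}(\bar x,\rho)$ (and is thus bounded), and converges to some $\tilde x\in\overline{\mathbb{B}}(\bar x,\rho)$.

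The crux, which I expect to be the only nonroutine step, is to establish $\sum_{k=1}^\infty\tau_k=\infty$, the remaining hypothesis needed to upgrade this convergence to convergence toward a local minimizer through Theorem~\ref{GDF local theorem}(ii). Because the backtracking stepsizes could a priori contract to zero, I would instead show that the internal parameters $\set{C_k}$ and $\set{t^{\min}_k}$ of Algorithm~\ref{GDF C11} stabilize after finitely many iterations, reproducing the stabilization argument of Claim~\ref{cl 1 global conve} in the proof of Theorem~\ref{KL GDF C11}. The new ingredient making that argument available is that $\set{x^k}$ is now confined to the bounded set $\mathbb{B}(\bar x,\rho)$: applying Lemma~\ref{lemm t uniform} with $\Omega=\overline{\mathbb{B}}(\bar x,\rho)$ yields a uniform $\bar t>0$ that is accepted along every $g^k$ once $C_k$ exceeds the local-approximation constant $C$ on this region, where the admissibility $2\norm{g^k-\nabla f(x^k)}\le\norm{g^k}$ follows from $\norm{g^k}>\mu C_k\delta_{k+1}$, $\mu>2$, and the error bound $\norm{g^k-\nabla f(x^k)}\le C\delta_{k+1}$. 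Arguing by contradiction from $C_k\uparrow\infty$ and $t^{\min}_k\downarrow0$, this forces the linesearch to accept some $t_k>\gamma\bar t>t^{\min}_k$, so Step~4 eventually stops decreasing $t^{\min}_k$, a contradiction. Consequently there is $N$ with $\tau_k=t_k\ge t^{\min}_N>0$ for all $k\ge N$, whence $\set{\tau_k}$ is bounded away from $0$ and $\sum_{k=1}^\infty\tau_k=\infty$.

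Finally I would invoke Theorem~\ref{GDF local theorem}(ii): the divergence of $\sum_{k}\tau_k$ promotes the convergence of $\set{x^k}$ to convergence toward a local minimizer $\tilde x$ with $f(\tilde x)=f(\bar x)$. Since the argument above also shows $\set{\tau_k}$ is bounded away from $0$, the stated convergence rates for $\set{x^k}$, $\set{\nabla f(x^k)}$, and $\set{f(x^k)}$ under the KL property with $\psi(t)=Mt^{q}$ follow directly from the corresponding rates in Theorem~\ref{GDF local theorem}(ii), using the local Lipschitz continuity of $\nabla f$ on $\mathbb{B}(\bar x,2\rho)\supset\set{x^k}$ to pass from the rate for $\set{x^k}$ to those for $\set{\nabla f(x^k)}$ and $\set{f(x^k)}$ exactly as in the proof of Theorem~\ref{convergence GDF}. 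All verifications other than the stabilization step are straightforward bookkeeping matching the parameters of the two algorithms.
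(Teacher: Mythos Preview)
Your proposal is correct and follows essentially the same approach as the paper: reduce Algorithm~\ref{GDF C11} to an instance of Algorithm~\ref{GDF general}, apply Theorem~\ref{GDF local theorem}(i) to get convergence and boundedness of $\set{x^k}$, then invoke the stabilization argument of Claim~\ref{cl 1 global conve} from the proof of Theorem~\ref{KL GDF C11} (now available because $\set{x^k}\subset\mathbb{B}(\bar x,\rho)$ is bounded) to conclude that $\set{\tau_k}$ is bounded away from $0$, whence $\sum_k\tau_k=\infty$ and Theorem~\ref{GDF local theorem}(ii) applies. The only minor redundancy is your last remark about deriving the rates for $\set{\nabla f(x^k)}$ and $\set{f(x^k)}$ from that for $\set{x^k}$: Theorem~\ref{GDF local theorem}(ii) already delivers all three rates directly.
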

\begin{proof}
By Theorem~\ref{GDF local theorem}, there exist positive numbers  $\xi,T,C$ such that for any initial point $x^1\in \mathbb{B}(\bar x,\xi)$, any initial radius $\delta_1\in (0,\Delta]$, and any sequences $\set{\tau_k}\subset [0,T ]$ and $\set{C_k}\subset[C,\infty)$, the sequence of iterates of Algorithm~\ref{GDF general} exhibits the properties listed in Theorem~\ref{GDF local theorem}(i,ii).

It is sufficient to verify that Algorithm~\ref{GDF C11} with initial point $x^1\in\mathbb{B}(\bar x,\xi)$, $C_1\ge C,\bar \tau\in(0,T],\delta_1\in(0,\Delta]$, and with other parameters taken from Algorithm~\ref{GDF C11} is a special case of the general Algorithm~\ref{GDF general}, and hence it enjoys the claimed convergence properties. We see from the structure of Algorithm~\ref{GDF C11} that $C_{k+1}\ge C_k$ for all $k\in\N$, which tells us that $C_k\ge C$ for all $k\in\N$.

It follows from  Step~2 of Algorithm~\ref{GDF C11} that $\tau_k\le \bar \tau$ for all $k\in\N$. Combining this with $\bar \tau \leq T$, ensures that ${\tau_k} \subset [0, T]$. Moreover, the selection of $\set{g^k}$ in Step~1 of Algorithm~\ref{GDF general} also reduces to that of Algorithm~\ref{GDF C11} since $\set{\nu_k}\downarrow 0$ as $k\to\infty$. Theorem~\ref{GDF local theorem}(i) tells us that the sequence of iterates $\set{x^k}$ generated by Algorithm~\ref{GDF C11} converges, and thus it is bounded. By using \eqref{third ineq} as in the proof of Theorem~\ref{KL GDF C11}, we get that the sequence of stepsizes $\set{\tau_k}$ is bounded away from $0$. This guarantees the condition $\sum_{k=1}^\infty \tau_k=\infty$ in Theorem~\ref{GDF local theorem}(ii), which verifies therefore that $\set{x^k}$ converges to some local minimizer $\tilde x$ of $f$ with the convergence rates as in Theorem~\ref{GDF local theorem}(ii). Thus the proof is complete.  
\end{proof}

If the condition $\sum_{k=1}^\infty \tau_k=\infty$ is removed, the trivial example with $\tau_k=0$ for all $k\in\N$ can be used to show that the iterative sequence $\set{x^k}$ generated by Algorithm~\ref{GDF general} may remain at the given initial point, which is nonstationary, and thus it does not converge to any local minimizer of $f.$ Even when $\tau_k>0$ for all $k\in\N$ but $\sum_{k=1}^\infty \tau_k<\infty$, the following example shows that $\set{x^k}$ may still converge to a nonstationary point, which confirms an essential role of the assumption $\sum_{k=1}^\infty \tau_k=\infty$ in deriving the local optimality of $\tilde x$ in Theorem~\ref{GDF local theorem}.
\begin{Example}\rm
Considering the function $f(x):=x^2$, it is clear that its derivative $f'$ is globally Lipschitz continuous, and that $f$ satisfies the KL property at the local minimizer $0$ with $\psi(t):=2t^{\frac{1}{2}}$. Therefore, all the assumptions in Theorem~\ref{GDF local theorem} are satisfied except for $\sum_{k=1}^\infty \tau_k=\infty$. Now we consider Algorithm~\ref{GDF general} with $\mathcal{G}(x,\delta)$ being chosen via the central finite difference. Then for any $x\in\R$, we have
\begin{align}\label{constant derivative}
\mathcal{G}(x,\delta)=\dfrac{f(x+\delta)-f(x-\delta)}{2\delta}=\dfrac{(x+\delta)^2-(x-\delta)^2}{2\delta}=\dfrac{4x\delta}{2\delta}=2x=f'(x)\text{ whenever }\delta>0.
\end{align}
Let us show that for any positive numbers $\xi,T$ and $C$, there exists an iterative sequence $\set{x^k}$ generated by Algorithm~\ref{GDF general} with the initial point $x^1\in \mathbb{B}(0,\xi)$, with the sequence of stepsizes $\set{\tau_k}\subset(0,T ]$, and with $C_k=C$ for all $k\in \N$ such that $\set{x^k}$ converges to a \textit{nonstationary point} of $f$.
To proceed, take any $\xi,T,C>0$ and choose $x^1:=\xi/2\in \mathbb{B}(0,\xi)$. The sequence $\set{\tau_k}$ is constructed inductively as follows. For any $k\in\N$ with $x^k>x^1/2>0$, we deduce from Step~1 of Algorithm~\ref{GDF general} and \eqref{constant derivative} that
\begin{align*}
g^k=\mathcal{G}(x^k,\delta_{k+1})=f'(x^k)=2x^k\;\text{ for all }\;k\in\N.
\end{align*}
Choosing $\tau_k:=\min\big\{T ,\frac{1}{4}-\frac{x^1}{8x^k}\big\}>0$ tells us that
\begin{align*}
x^{k+1}&=x^k-\tau_kg^k=(1-2\tau_k)x^k\ge\Big(1-\frac{1}{2}+\frac{x^1}{4x^k}\Big)x^k\\
&=\dfrac{x^k}{2}+\dfrac{x^1}{4}>\dfrac{x^1}{2}.
\end{align*}
Arguing by induction with the usage of $x^1>\frac{x^1}{2}$ ensures that the sequence of positive stepsizes $\set{\tau_k}$ constructed as above provides $x^k>\frac{x^1}{2}$ for all $k\in\N$. This implies that $\set{x^k}$ does not converge to $0,$ which is the only stationary point of $f$. We can actually find the exact limit for $\displaystyle{\lim_{k\to\infty}}x^k$ to see that it is clearly not $0$. Since $x^k>0$ for all $k\in\N$, the iterative procedure $x^{k+1}=x^k-2\tau_kx^k$ shows that $\set{x^k}$ is decreasing, and thus it has a limit $\bar x$ by taking into account its boundedness from below. Therefore,
\begin{align*}
\tau_k=\dfrac{x^k-x^{k+1}}{2x^k}\le \dfrac{x^k-x^{k+1}}{4x^1}\rightarrow0\;\text{ as }\;k\rightarrow\infty
\end{align*}
which implies by the selection of $\tau_k$ that $\displaystyle{\lim_{k\to\infty}}\frac{x^1}{8x^k}=\frac{1}{4}$, i.e., $\displaystyle{\lim_{k\to\infty}}x^k=\frac{x^1}{2}.$ It can also be observed that the assumption $\sum_{k=1}^\infty \tau_k=\infty$ is not satisfied in this example since $x^1 \tau_k\le 2\tau_k x^k=x^{k}-x^{k+1}$ yields
\begin{align*}
\sum_{k=1}^\infty\tau_k\le \dfrac{1}{x^1}\sum_{k=1}^\infty(x^k-x^{k+1})=\dfrac{1}{x^1}(x^1-\lim_{k\rightarrow\infty}x^k)<\infty.
\end{align*}
\end{Example}\vspace*{-0.1in}
 
\section{Numerical Experiments}\label{sec:7}
Here we present numerical experiments demonstrating the efficiency of our methods. This section is split into two subsections addressing different classes of problems with the usage of different algorithms.

\subsection{Experiments for \texorpdfstring{$\mathcal{C}^{1,1}_L$}{Lg} functions}
The first subsection compares the efficiency of our DFC methods with some other well-known derivative-free methods for minimizing $\mathcal{C}^{1,1}_L$ functions. The detailed information on the methods considered in these numerical experiments are as follows:
\begin{enumerate}[(i)]
\item DFC-fordif, i.e., Algorithm \ref{GDF} with forward finite difference.
\item  DFC-cendif, i.e., Algorithm \ref{GDF} with central finite difference.
\item FMINSEARCH, i.e., a Matlab implementation of the Nelder–Mead simplex-based method by Lagarias et al. \cite{lagarias98}.
\item  IMFIL-fordif, i.e., the implicit filtering algorithm with forward finite difference \cite{gilmore95}.
\item  IMFIL-cendif, i.e., the implicit filtering algorithm with central finite difference \cite{gilmore95}.
\item RG, i.e., a random gradient-free algorithm for smooth optimization proposed by Nesterov and Spokoiny \cite{nesterov17}.
\end{enumerate}
Note that the exact calculation of Lipschitz constants is not necessary for the implementation of FMINSEARCH, IMFIL, and our DFC methods while it is compulsory for RG. The solvers are tested on minimizing a noisy function $\phi(x):=f(x)+\xi(x)$, where $\xi(x)$ is a uniformly distributed random variable with a given noise level $\varepsilon\ge 0$, i.e., $\xi(x)\sim U(-\varepsilon,\varepsilon)$, and where the selections for $f(x)$ are as the following. 
\begin{enumerate}
\item {\em Least-square regression}
\begin{align}\label{least square}
f(x):=\norm{Ax-b}^2,
\end{align}
where $A$ is an $m\times n$ matrix and $b$ is a vector in $\R^m.$ This problem is used for benchmarking derivative-free optimization methods in \cite{rios13}.  It can be seen that $\nabla^2 f(x)=2A^*A\text{ for all }x\in\R^n,$ where $A^*$ is the transpose of matrix $A$. Therefore, $\nabla f$ is Lipschitz continuous with the constant $L:=2\norm{A^*A},$ where $\norm{M}:=\sigma_{\max}(M)$, which is the largest singular value of the matrix $M\in\R^{n\times n}$.
\item {\em Smooth variant of nonconvex image restoration}
\begin{align}\label{nonconvex}
f(x):=\sum_{i=1}^n \log(1+(Ax-b)^2_i),
\end{align}
where $A$ is a $n\times n$ matrix and $b$ is a vector in $\R^n.$ This problem is considered in \cite[Section~5.5]{themelis17} and \cite[Section~4]{kmpt23.2} with a nonsmooth term added to the objective function. It is clear that $\nabla f$ is Lipschitz continuous with the constant $L:=2\norm{A^*A}_\infty$, where
\begin{align*}
\norm{M}_\infty=\max_{1\le i\le n}\sum_{j=1}^n|m_{ij}|,\quad M=[m_{ij}]_{i,j=\overline{1,n}}.
\end{align*}
\end{enumerate}
The aforementioned methods are tested on randomly generated datasets with different sizes. To be more specific, an $n\times n$ matrix $A$ and a vector $b\in\R^n$ are generated randomly with i.i.d. (independent and identically distributed) standard Gaussian entries. The behavior of the methods is investigated with different dimensions $n\in \set{50,100,200}$ and different noise levels $\varepsilon\in\set{0,10^{-8},10^{-4},10^{-2}}$. The noise level $\varepsilon=0$ is considered as the noiseless case.  In total, the methods are tested in 24 different problems. All the methods are executed until they reach the maximum number of function evaluations of $200^*n$. The results in terms of function values achieved by the methods are presented in the Figures 
\ref{fig:C1L 50 result}--\ref{fig:C1L 200 result} below, where ``$L$" stands for least square, ``$N$" stands for nonconvex image restoration, and the number after ```$L$" and ``$N$" is the dimension of the problem.  It can be seen that all 24 out of 24 test problems admit our methods (DFC-fordif or DFC-cendif) as {\em the best} among the tested algorithms.
\begin{figure}
\centering
\includegraphics[width=.45\textwidth]{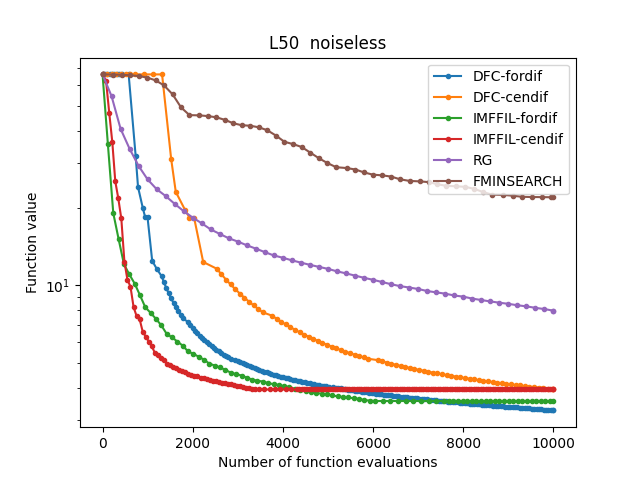}\quad
\includegraphics[width=.45\textwidth]{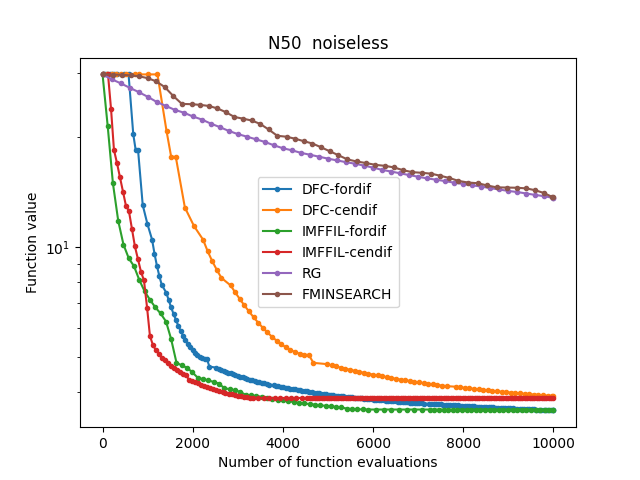}\quad

\includegraphics[width=.45\textwidth]{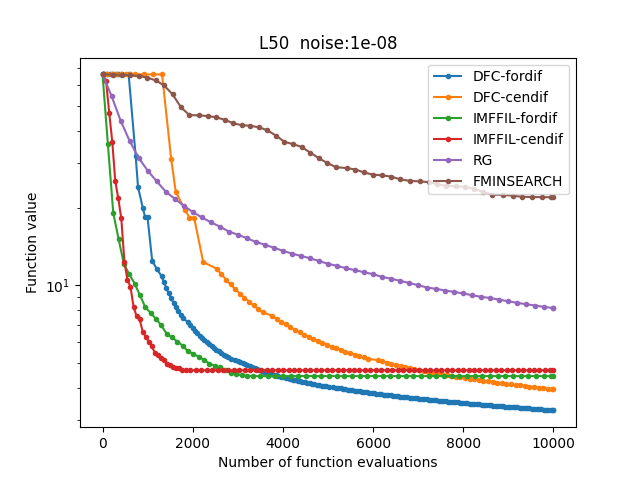}\quad
\includegraphics[width=.45\textwidth]{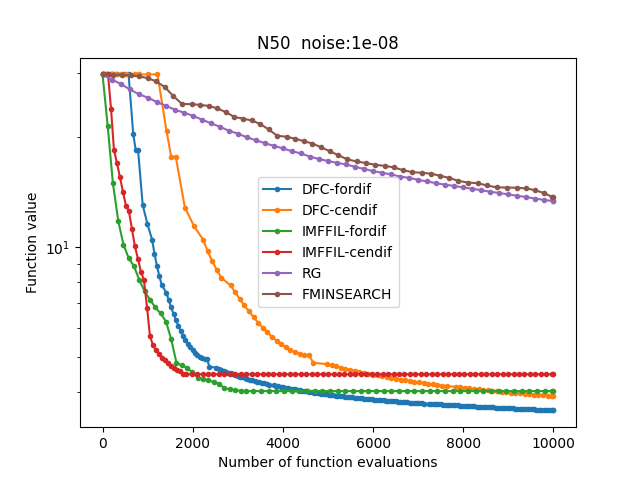}\quad

\includegraphics[width=.45\textwidth]{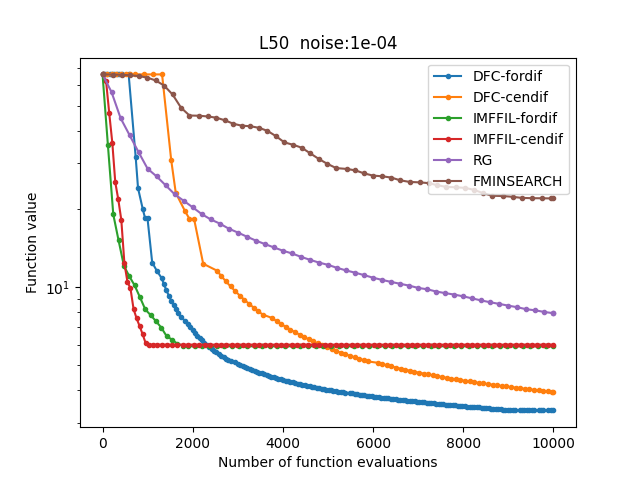}\quad
\includegraphics[width=.45\textwidth]{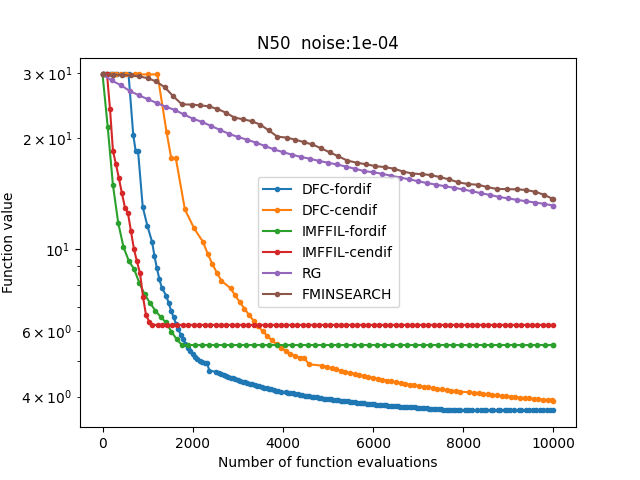}\quad

\includegraphics[width=.45\textwidth]{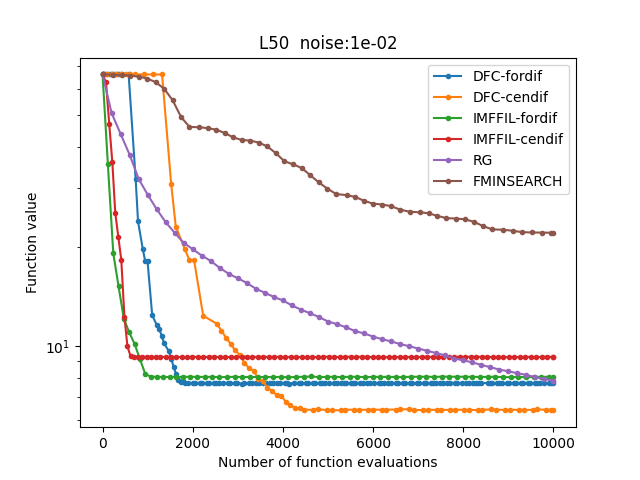}\quad
\includegraphics[width=.45\textwidth]{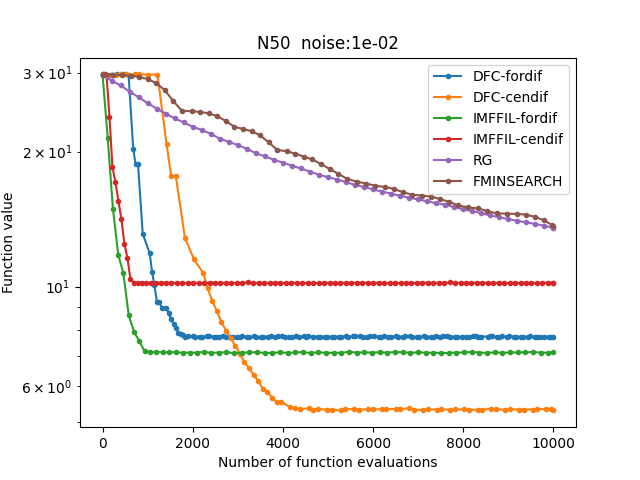}\quad
 \caption{$C^{1,1}_L$ functions with dimension $n=50$}\label{fig:C1L 50 result}
\end{figure}

\begin{figure}
\centering
\includegraphics[width=.45\textwidth]{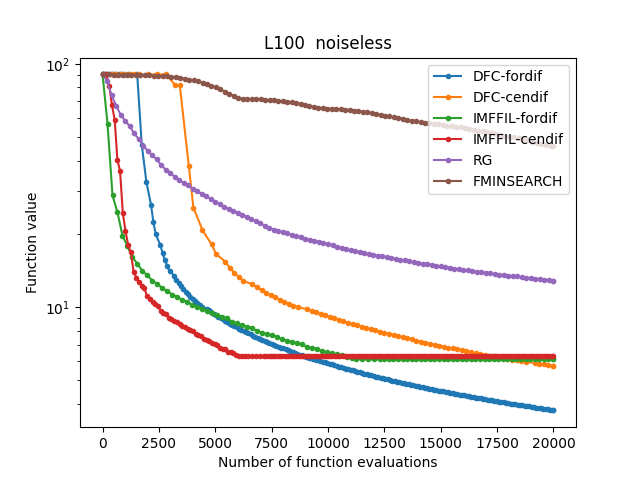}\quad
\includegraphics[width=.45\textwidth]{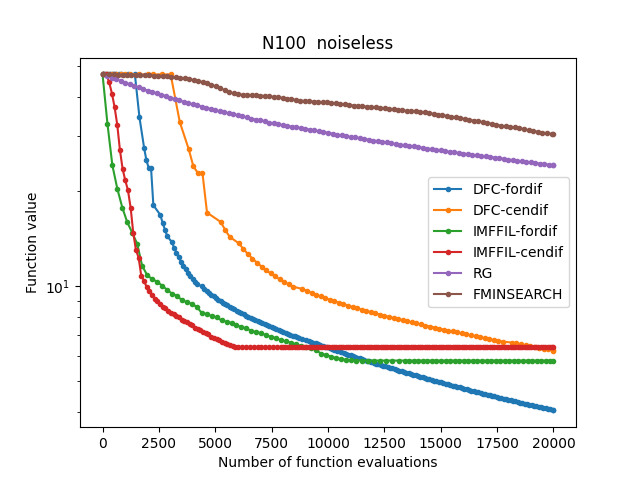}\quad

\includegraphics[width=.45\textwidth]{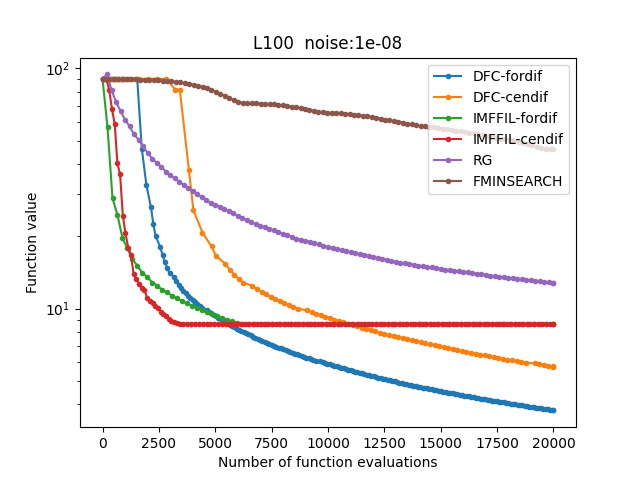}\quad
\includegraphics[width=.45\textwidth]{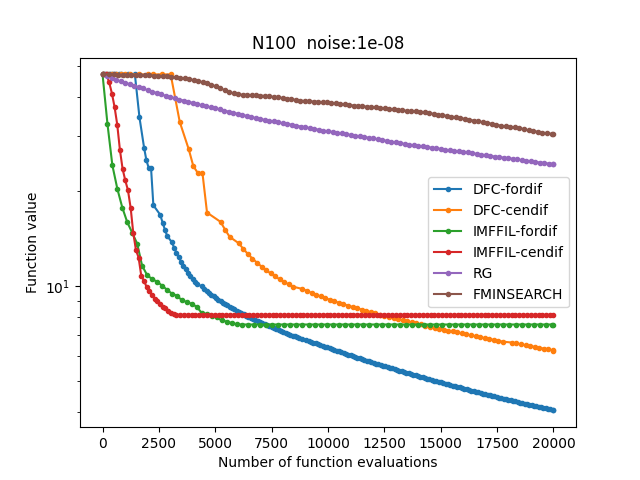}\quad

\includegraphics[width=.45\textwidth]{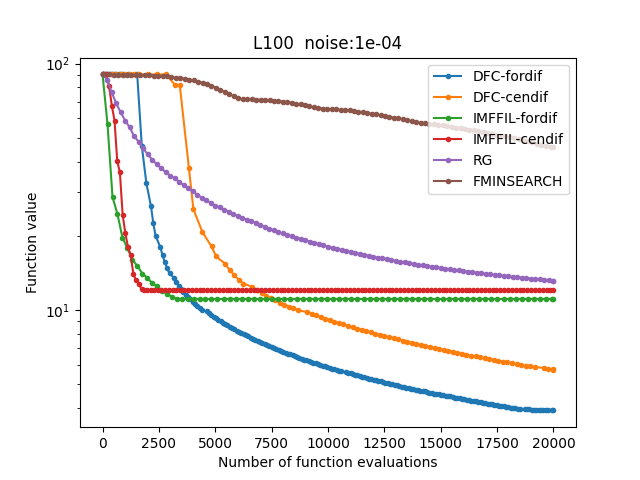}\quad
\includegraphics[width=.45\textwidth]{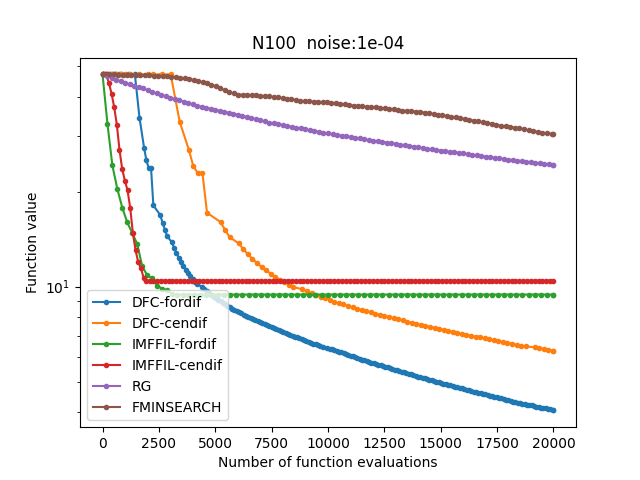}\quad

\includegraphics[width=.45\textwidth]{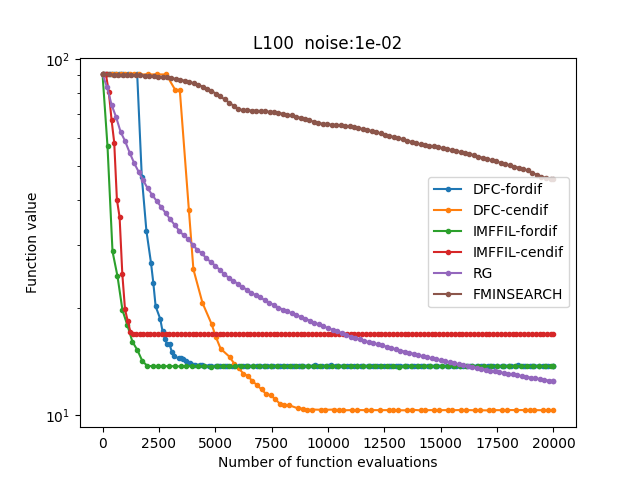}\quad
\includegraphics[width=.45\textwidth]{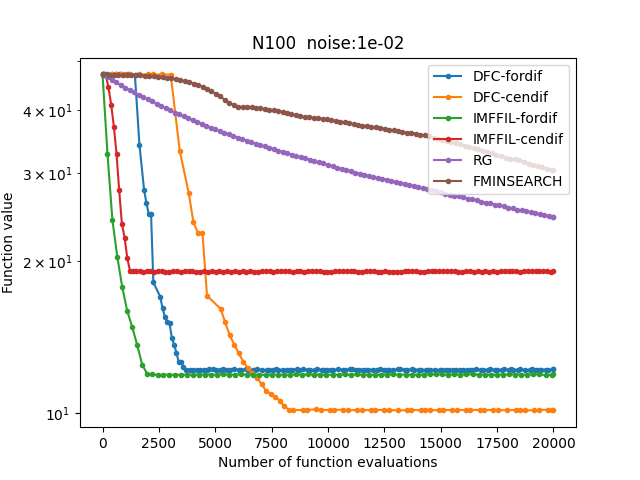}\quad
 \caption{$C^{1,1}_L$ functions with dimension $n=100$}\label{fig:C1L 100 result}
\end{figure}

\begin{figure}
\centering
\includegraphics[width=.45\textwidth]{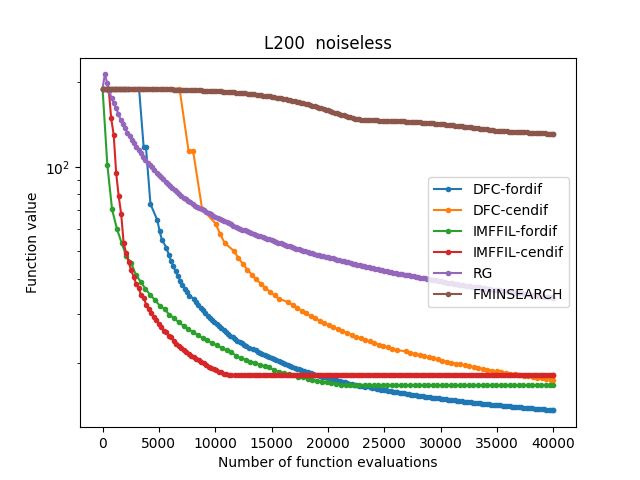}\quad
\includegraphics[width=.45\textwidth]{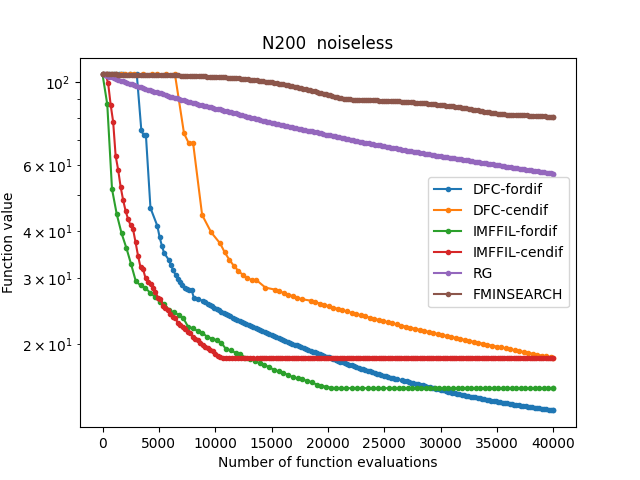}\quad

\includegraphics[width=.45\textwidth]{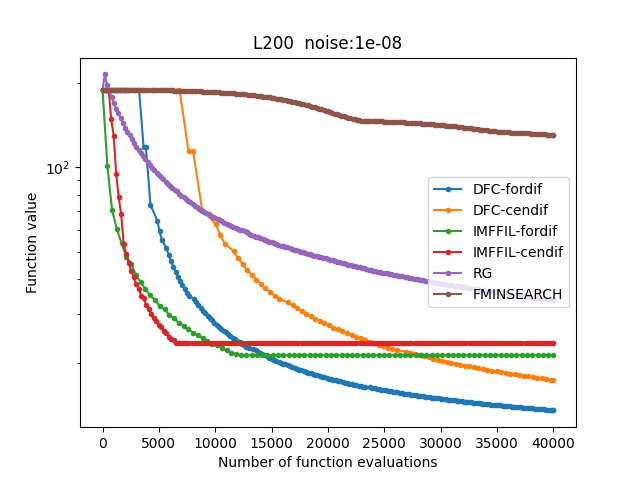}\quad
\includegraphics[width=.45\textwidth]{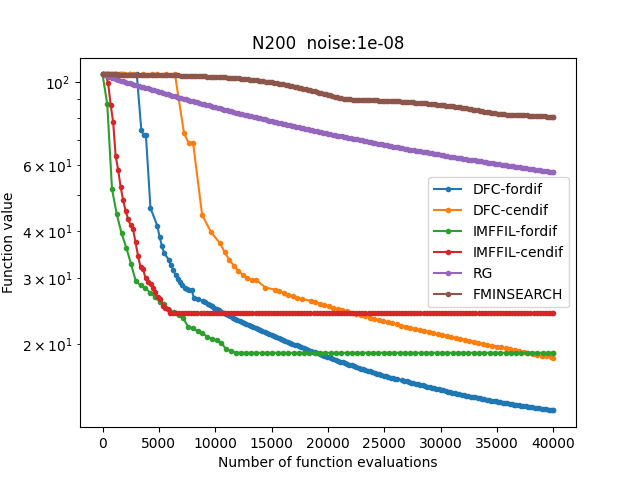}\quad

\includegraphics[width=.45\textwidth]{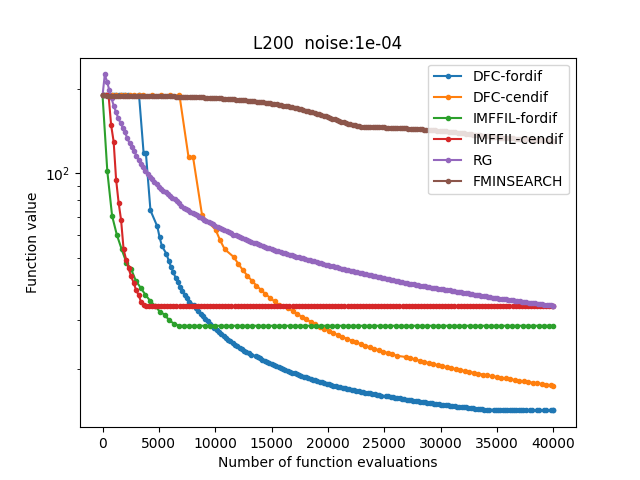}\quad
\includegraphics[width=.45\textwidth]{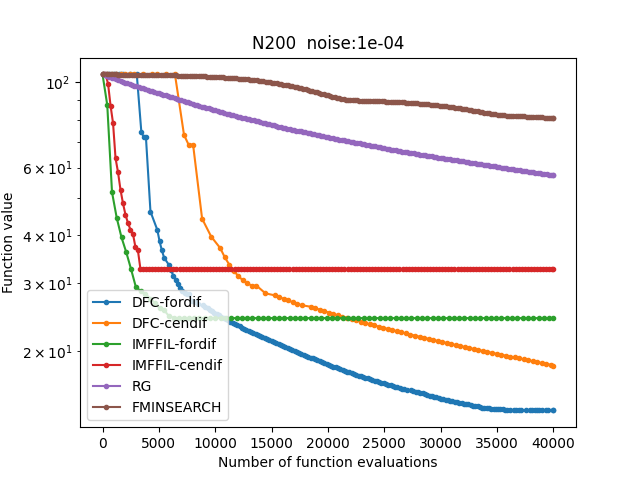}\quad

\includegraphics[width=.45\textwidth]{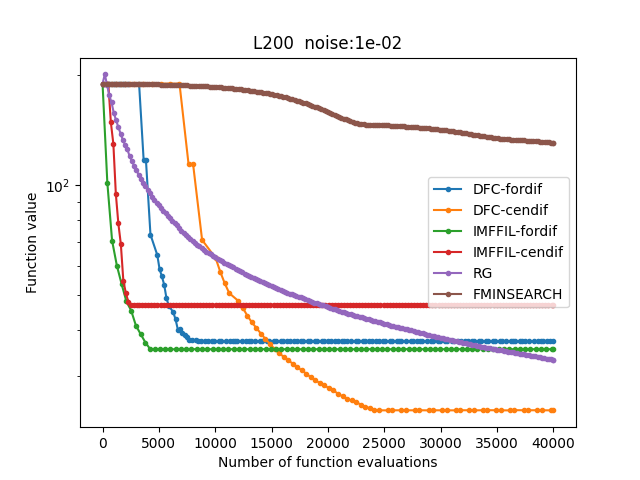}\quad
\includegraphics[width=.45\textwidth]{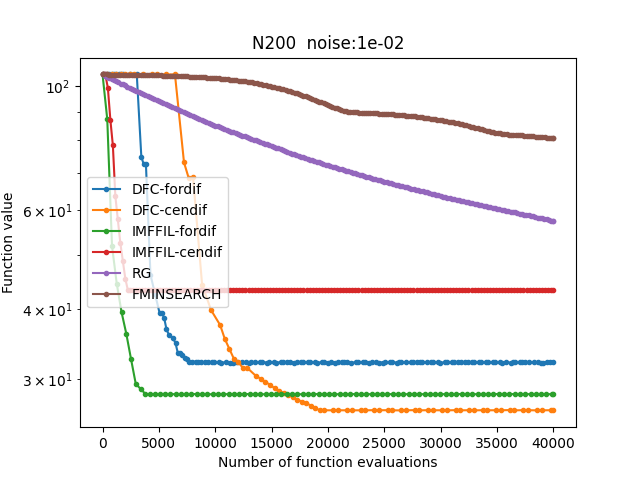}\quad
 \caption{$C^{1,1}_L$ functions with dimension $n=200$}\label{fig:C1L 200 result}
\end{figure}
\subsection{Experiments for \texorpdfstring{$\mathcal{C}^{1,1}$}{Lg} functions}\label{subsec 6.1}
This subsection is devoted to the comparison between the efficiency of our DFB methods, i.e., Algorithm~\ref{GDF C11} and that of other state-of-the-art derivative-free methods in solving $\mathcal{C}^{1,1}$ optimization problems. The methods considered in this numerical experiment are:
\begin{enumerate}[(i)]
\item DFB-fordif, i.e., Algorithm~\ref{GDF C11} with forward finite difference.
\item  DFB-cendif, i.e., Algorithm~\ref{GDF C11} with central finite difference.
\item FMINSEARCH, i.e., a Matlab implementation of the Nelder–Mead simplex-based method by Lagarias et al. \cite{lagarias98}.
\item  IMFIL-fordif, i.e., the implicit filtering algorithm with forward finite difference \cite{gilmore95}.
\item  IMFIL-cendif, i.e., the implicit filtering algorithm with central finite difference \cite{gilmore95}.
\end{enumerate}
It should be mentioned that the convergence properties of the implicit filtering algorithm in \cite{choi00,gilmore95,kelly99} are only valid for $\mathcal{C}^{1,1}_L$ functions under some rather strong conditions; see, e.g., \cite[Assumption~3.1]{gilmore95} and \cite[Theorem~2.1, Assumption~2.1]{choi00}, which are not applicable to $\mathcal{C}^{1,1}$ functions. However, this method is numerically tested here since its algorithm formulation does not require a Lipschitz constant of the gradient. The efficiency of all the aforementioned methods is compared in minimizing the noisy and nonconvex Rosenbrock function \cite{jamil13}, i.e., $\phi(x):=f(x)+\xi(x)$, where 
\begin{align*}
 f(x):=\sum_{i=1}^{n-1}\left[100\left(x_{i+1}-x_{i}^{2}\right)^{2}+\left(x_{i}-1\right)^{2}\right],\quad x\in\R^n,
\end{align*}
and where $\xi(x)$ is a uniformly distributed random variable with a given noise level $\varepsilon\ge 0$, i.e., $\xi(x)\sim U(-\varepsilon,\varepsilon)$. We investigate the behavior of the methods for different dimensions $n\in \set{50,100,200}$, different noise levels $\varepsilon\in\set{0,10^{-8
},10^{-4},10^{-2}}$. The noise level $0$ is considered as the noiseless case. Since the function $f$ has different local minimizers, the methods are also tested with two different initial points which are either the zero vector or the vector containing all $0.5$ elements. In total, the methods are tested in 24 problems. All the methods are run until they reach the maximum number of function evaluations of $200^*n$. It can be seen in Figures~\ref{fig:50 result}--\ref{fig:200 result} below that 20 problems admit our methods (DFB-fordif or DFB-cendif) as {\em the best} among tested algorithms, {\em except} for $4$ problems with the noise level $10^{-2}.$ 
\begin{figure}[H]
\centering
\includegraphics[width=.45\textwidth]{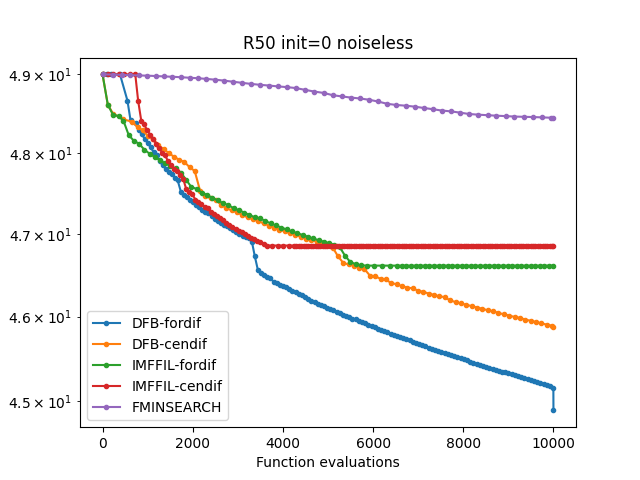}\quad
\includegraphics[width=.45\textwidth]{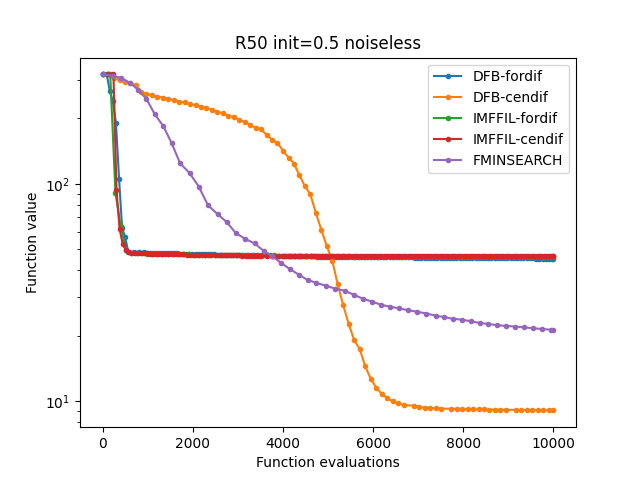}\quad

\includegraphics[width=.45\textwidth]{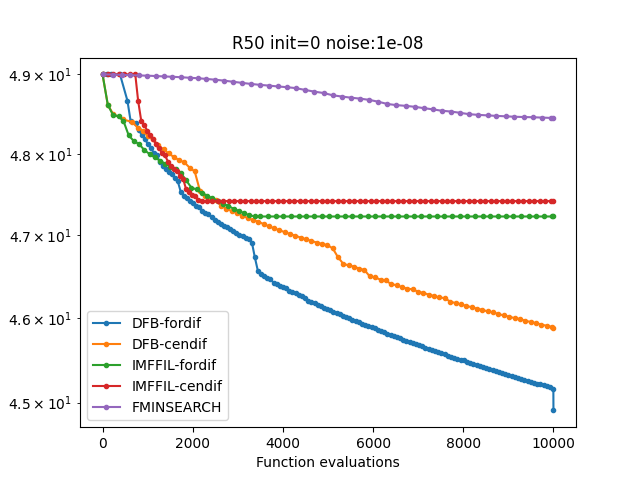}\quad
\includegraphics[width=.45\textwidth]{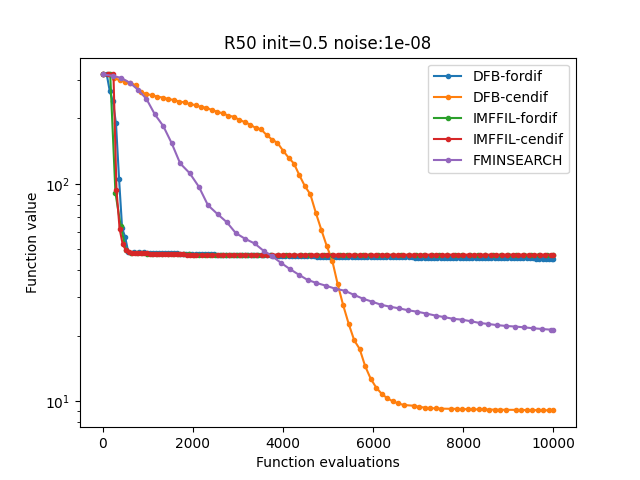}\quad

\includegraphics[width=.45\textwidth]{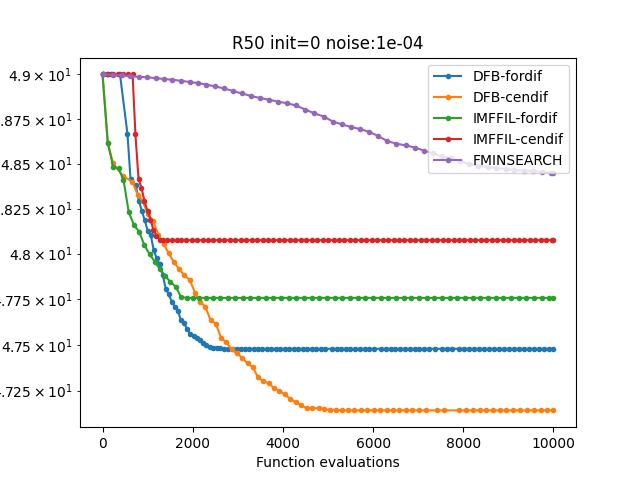}\quad
\includegraphics[width=.45\textwidth]{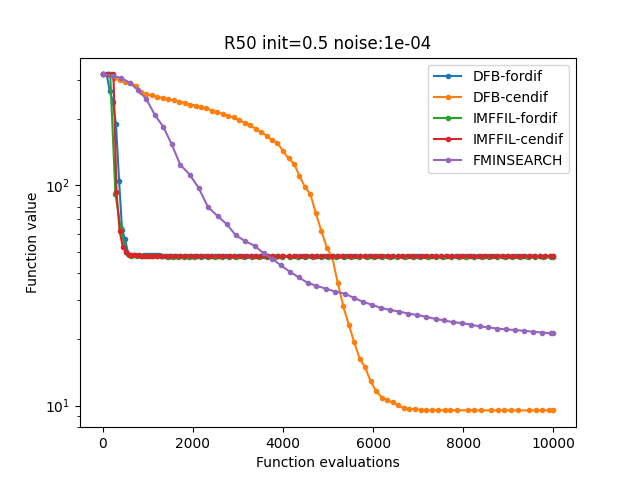}\quad

\includegraphics[width=.45\textwidth]{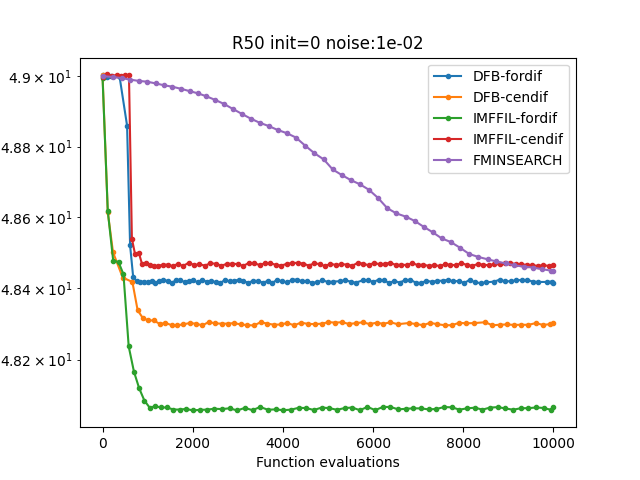}\quad
\includegraphics[width=.45\textwidth]{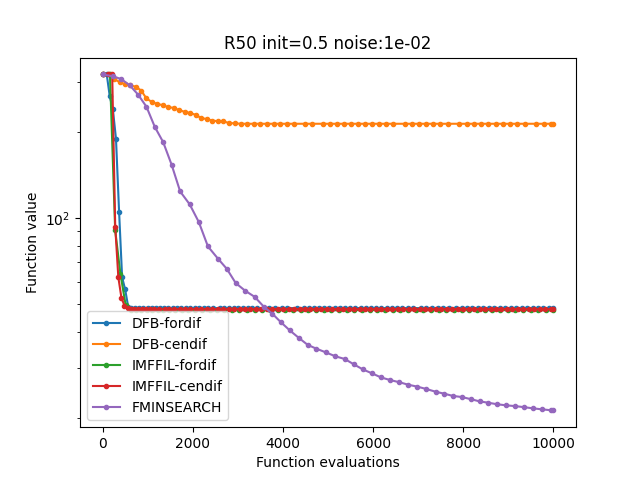}\quad
 \caption{$C^{1,1}$ functions with dimension $n=50$}\label{fig:50 result}
\end{figure}\vspace*{-0.2in}

\begin{figure}[H]
\centering
\includegraphics[width=.45\textwidth]{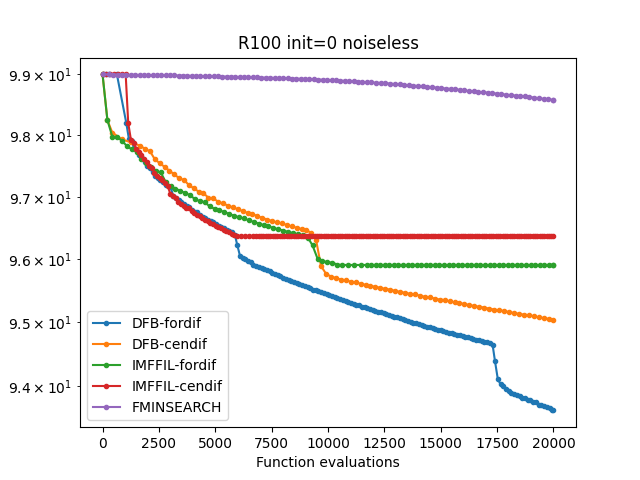}\quad
\includegraphics[width=.45\textwidth]{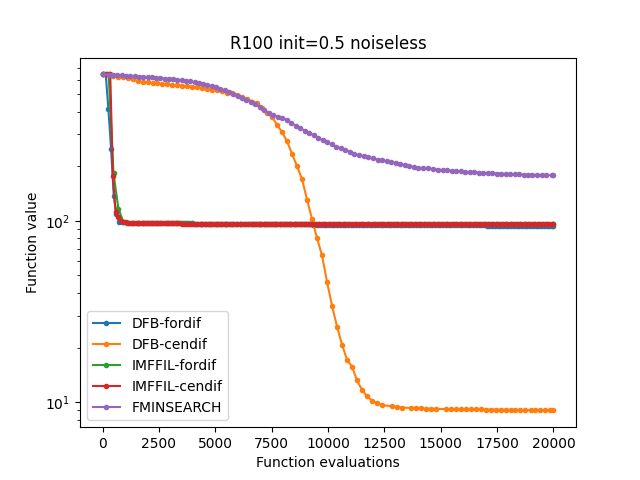}\quad

\includegraphics[width=.45\textwidth]{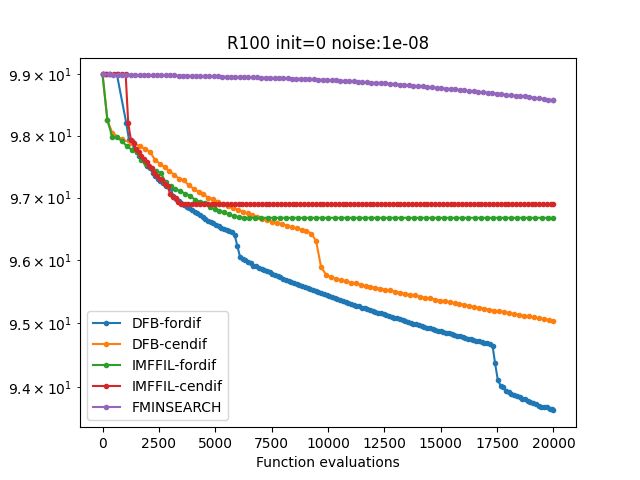}\quad
\includegraphics[width=.45\textwidth]{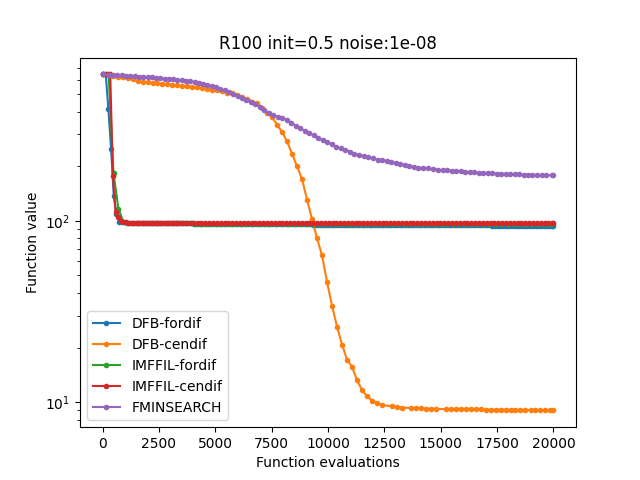}\quad

\includegraphics[width=.45\textwidth]{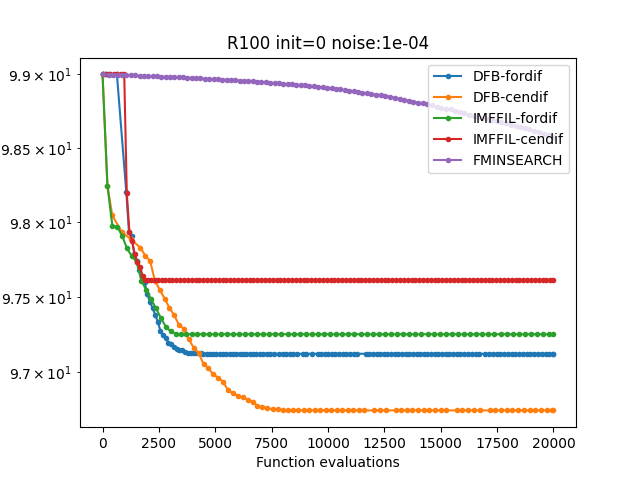}\quad
\includegraphics[width=.45\textwidth]{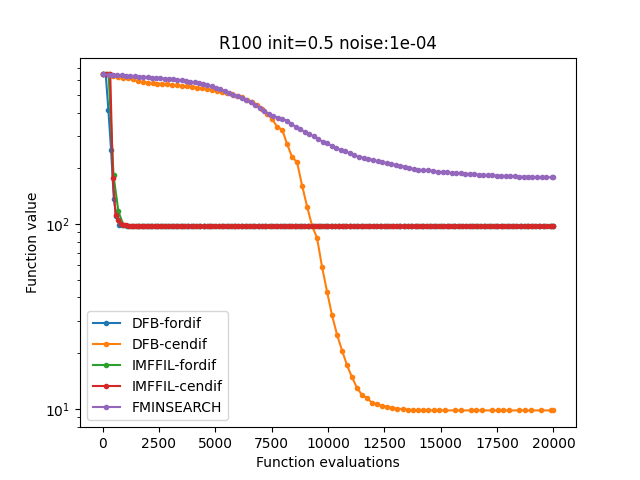}\quad

\includegraphics[width=.45\textwidth]{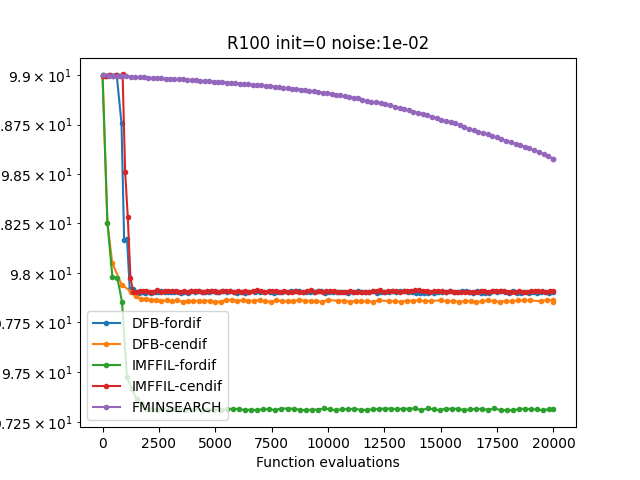}\quad
\includegraphics[width=.45\textwidth]{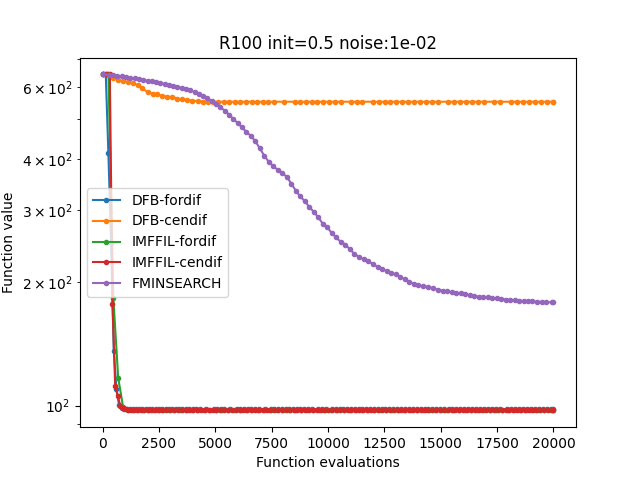}\quad
 \caption{$C^{1,1}$ functions with dimension $n=100$}\label{fig:100 result}
\end{figure}\vspace*{-0.2in}

\begin{figure}[H]
\centering
\includegraphics[width=.45\textwidth]{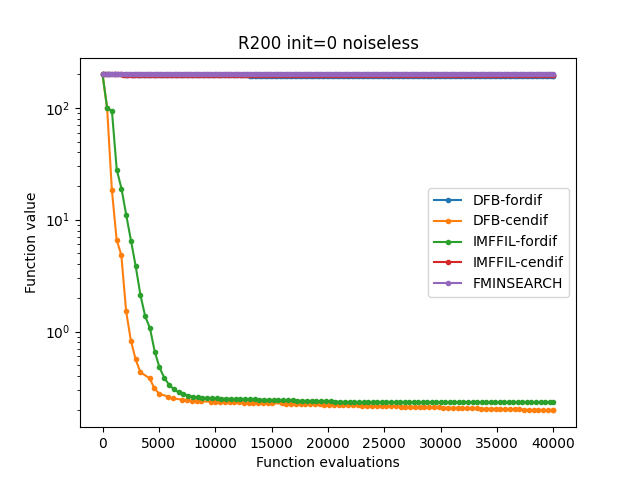}\quad
\includegraphics[width=.45\textwidth]{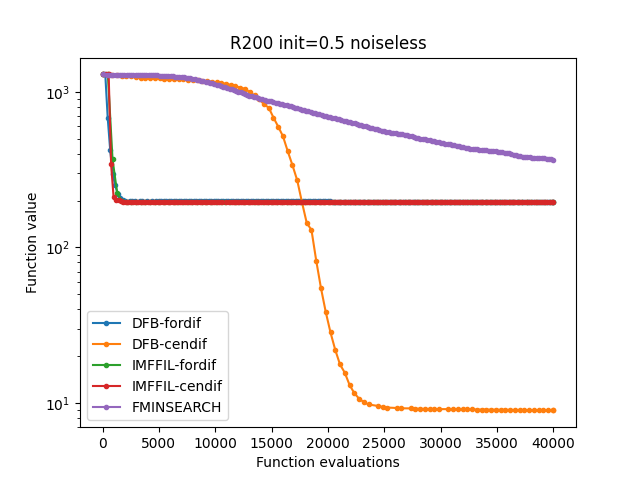}\quad

\includegraphics[width=.45\textwidth]{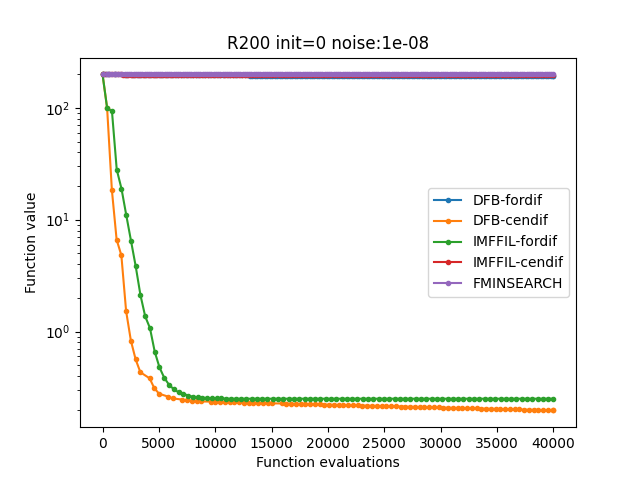}\quad
\includegraphics[width=.45\textwidth]{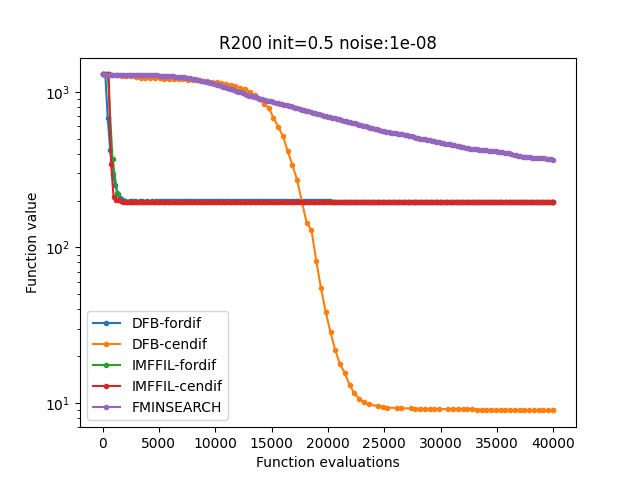}\quad

\includegraphics[width=.45\textwidth]{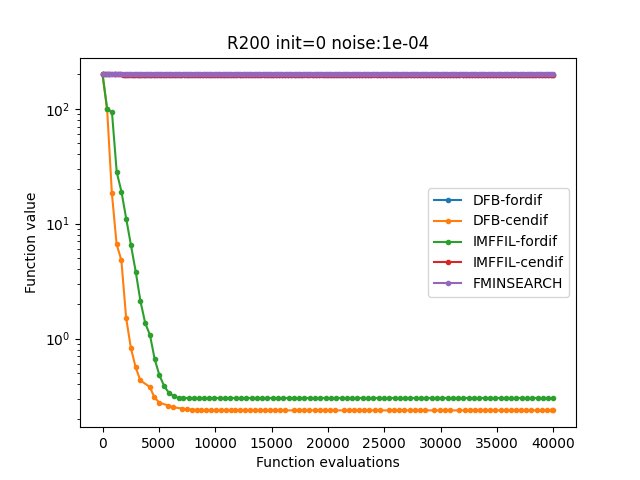}\quad
\includegraphics[width=.45\textwidth]{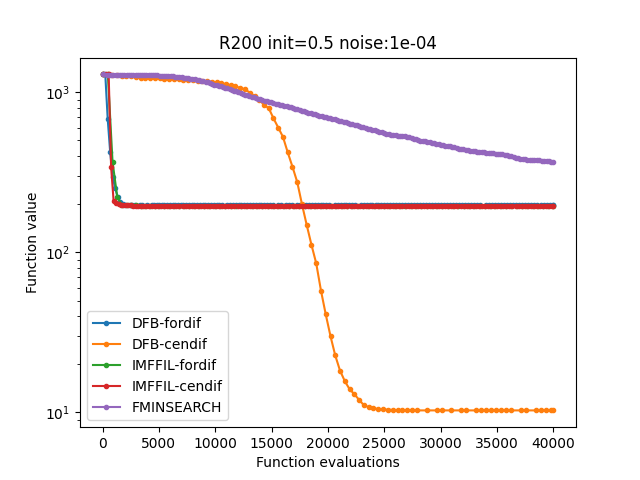}\quad

\includegraphics[width=.45\textwidth]{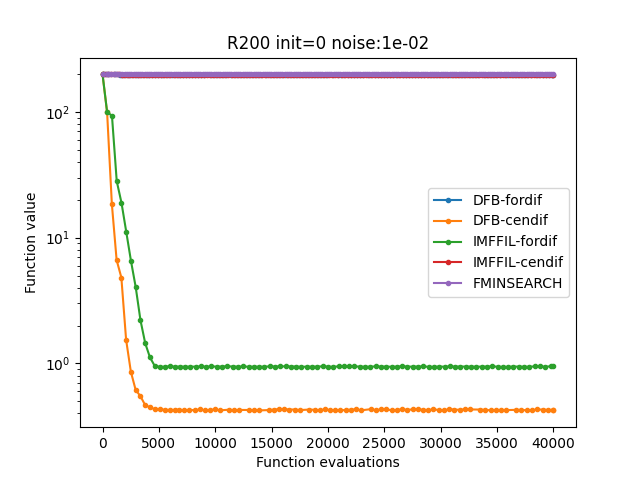}\quad
\includegraphics[width=.45\textwidth]{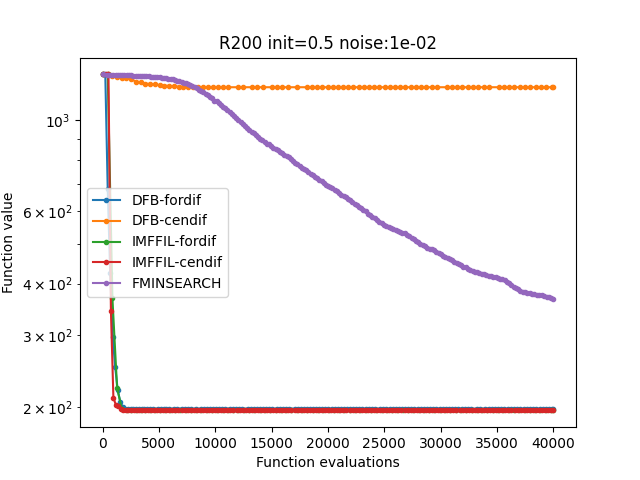}\quad
 \caption{$C^{1,1}$ functions with dimension $n=200$}\label{fig:200 result}
\end{figure}

\section{Concluding Remarks}\label{sec:concl}
This paper proposes two general derivative-free optimization methods for minimizing smooth functions. The proposed methods achieve stationary accumulation points, and under the KL property of the objective function, the global convergence with constructive convergence rates. The two newly developed algorithms cover derivative-free methods based on finite differences for $\mathcal{C}^{1,1}_L$ and $\mathcal{C}^{1,1}$ functions, where the finite difference intervals are chosen as large as possible while being automatically adapted with the exact gradients without knowing them. Our methods do not force finite difference intervals to decrease after each iteration, which omits rounding errors as much as possible and thus leads to better numerical performances in solving general convex, nonconvex, noiseless, and noisy derivative-free smooth problems in finite-dimensional spaces. In addition to the aforementioned globally convergent algorithms, we design their local versions, which locally converge to nonisolated local minimizers.

Our future research includes convergence analysis of the newly developed algorithms coupled with quasi-Newton methods for noisy smooth functions. We also intend to establish efficient conditions to ensure local and global convergence to local minimizers of iterative sequences generated by derivative-free methods for problems of nonsmooth constrained optimization.\vspace*{-0.1in}

\end{document}